%
\documentclass[english]{amsart}

\usepackage[latin1]{inputenc}

%
\usepackage{graphicx}

\usepackage[round]{natbib}
\usepackage{amsthm,amsmath,amssymb}
\theoremstyle{plain}
\newtheorem{thm}{Theorem}[section]
\newtheorem{prop}[thm]{Proposition}
\newtheorem{lem}[thm]{Lemma}

\newtheorem{cor}[thm]{Corollary}
\theoremstyle{definition}
\newtheorem*{defn}{Definition}

\theoremstyle{remark}
\newtheorem*{remark}{Remark}

\newcommand{\Fer}{\mathrm{Fer}}
\newcommand{\Gal}{\mathrm{Gal}}
\newcommand{\eqdef}{\stackrel{\rm def}{=}}
\newcommand{\xk}{X_1,\ldots,X_k}

\newcommand{\lk}{\lambda^{(1)},\ldots,\lambda^{(k)}}
\usepackage{babel}

\newcommand{\Des}{\mathrm{Des}}
\newcommand{\HDes}{\mathrm{HDes}}
\newcommand{\Par}{\mathrm{Par}}

\newcommand{\Hilb}{\mathrm{Hilb}}
\newcommand{\St}{\mathcal {ST}}

\newcommand{\qc}{G(r,p,q,n)}
\newcommand{\mc}{\mathbb C}
\newcommand{\fmaj}{\mathrm{fmaj}}
\newcommand{\Irr}{\mathrm{Irr}}
\newcommand{\GCD}{\mathrm{GCD}}

\author{Fabrizio Caselli}
\title[Projective reflection groups]{Projective reflection groups}
\address{Dipartimento di matematica, Universit\`a di Bologna, \\Piazza di Porta San Donato 5, \\ Bologna 40126, Italy}

\begin{document}
\thanks{\emph{MSC:} 05E15}
\keywords{Reflection groups, descent statistics, invariant algebras, Young tableaux.}

\maketitle
\begin{abstract}
We introduce the class of projective reflection groups which includes all complex reflection groups. We show that several aspects involving the combinatorics and the representation theory of all non exceptional irreducible complex reflection groups find a natural description in this wider setting.

\end{abstract}

\section{Introduction}
A complex reflection (or simply a reflection) is an endomorphism of a complex vector space $V$ which is of finite order and such that its fixed point space is of codimension 1. Finite reflection groups are finite subgroups of $GL(V)$ generated by reflections. They have probably been introduced by Shephard in \cite{Sh} and have been characterized by means of their ring of invariants and completely classified by Chevalley \cite{C} and Shephard-Todd \cite{ST} in the fifties, generalizing the well-known fundamental theorem of symmetric functions. In this classification there is an infinite family $G(r,p,n)$ of irreducible reflection groups, where $r,p,n$ are positive integers (with $r\equiv0 \mod p$) and 34 other exceptional groups. The relationship between the combinatorics and the (invariant) representation theory of symmetric groups is fascinating from both combinatorial and algebraic points of view, and the problem of generalizing these sort of results to all reflection groups has been faced in many ways. Besides several results that holds in the full generality of reflection groups, there are some relevant generalizations which have been obtained only for the wreath product groups $G(r,n)=G(r,1,n)$ (see, e.g., \cite{SW, St, AR, BRS, APR}). Some attempts to extend these results to other reflection groups have been made, in particular for Weyl groups of type $D$, (see, e.g., \cite{BC,BC1,BB}) though they are probably not completely satisfactory as in the case of wreath products.

 In this work we introduce a new class of groups, the projective reflection groups, which are a generalization of reflection groups. We first prove some results on the invariant theory of these groups that hold in full generality. Then we will focus our attention on the infinite family $G(r,p,q,n)$ of projective reflection groups, which includes all the groups $G(r,p,n)$ (in fact $G(r,p,1,n)=G(r,p,n)$). Fundamental in the theory of these groups is the following notion of duality: if $G=G(r,p,q,n)$ then we denote by $G^*=G(r,q,p,n)$ (where the roles of $p$ and $q$ have been interchanged). We note in particular that reflection groups $G$ satisfying $G=G^*$ are exactly the wreath products $G(r,n)=G(r,1,1,n)$ and that in general if $G$ is a reflection group then $G^*$ is not. We show that much of the theory of reflection groups can be extended to projective reflection groups and that the combinatorics of a projective reflection group $G$ of the form $\qc$ is strictly related to the (invariant) representation theory of $G^*$, generalizing several known results for wreath products in a very natural way.

The paper is organized as follows. We present definitions and a characterization in terms of invariants of projective reflection groups in \S \ref{maindef}. In \S \ref{coal} we further consider the action of a projective reflection group on a ring of polynomials to define and study its coinvariant algebra. In \S \ref{grpqn} we introduce the groups $G(r,p,q,n)$ and we answer some natural questions about possible isomorphisms between these groups. We exploit those combinatorial aspects of the groups $G(r,p,q,n)$ that we need and we describe a monomial basis for the coinvariant algebra in \S \ref{stat}. In \S \ref{irrep} we analyze the structure of the irreducible representations of these groups and we provide a combinatorial interpretation for their dimensions. In \S \ref{desrep} we consider a decomposition of the homogeneous components of the coinvariant algebra that leads us to define the descent representations of a projective reflection group: these representations are used in \S \ref{tendia} to describe the main new results of this paper. Here we show an explicit basis of the diagonal invariant algebra as a free module over the tensorial invariant algebra of all projective reflection groups $G(r,p,q,n)$. It is in this description that the interplay between a group $G$  and its dual $G^*$ attains its apex. In \S \ref{krocoe} we deduce some properties of Kronecker coefficients that can be deduce from the main results. In \S \ref{robshe} we extend the Robinson-Schensted correspondence on wreath products to all projective reflection groups of the form $G(r,p,q,n)$: in this general context it is not a bijection but it will be the key point to solve in \S \ref{gal} a problem posed by Barcelo, Reiner and Stanton on the Hilbert series of a certain diagonal invariant module twisted by a Galois automorphism. 

\section{Definitions and characterizations of projective reflection groups}\label{maindef}
Let $V$ be a finite dimensional complex vector space and consider the natural map $\varphi:GL(V)\rightarrow GL(S^q(V))$, where $S^q(V)$ is the $q$-th symmetric power of $V$. We clearly have $\ker \varphi=C_q$, where $C_q$ is the cyclic group of scalar matrices of order $q$ generated by $\zeta_qI$, with $\zeta_q\eqdef e^{\frac{2\pi i}{q}}$.\\
Now, if $W\subseteq GL(V)$ is a finite reflection group we have $\varphi (W)\cong W/(W\cap C_q)$. In particular, if $C_q\subset W$ we have that $W/C_q$ can be identified with a subgroup of $GL(S^q(V))$ by means of the map $\varphi$.
\begin{defn}
Let $G$ be a finite subgroup of $GL(S^q(V))$. We say that the pair $(G,q)$ is a \emph{projective reflection group} if there exists a reflection group $W\subset GL(V)$ such that $C_q\subseteq W$ and $G=W/C_q$.
\end{defn}
We mind the reader that we will usually omit the parameter $q$ whenever it can be clearly deduced from the context and we will simply talk about the projective reflection group $G$.
We also note that we obtain standard complex reflection groups in the case $q=1$.

Any finite subgroup of $GL(V)$ acts naturally on the symmetric algebra $S[V^*]$. 
A well-known theorem due to Chevalley \cite{C} and Shephard-Todd \cite{ST} says that a finite group $G$ of $GL(V)$ is a reflection group if and only if its invariant ring $S[V^*]^G$ is itself a polynomial algebra. Our next target is to generalize this result to the present context. We recall that a projective reflection group $(G,q)$ is equipped with an action on the symmetric power $S^q(V)$. The dual action can be extended to $S_q[V^*]$, the algebra of polynomial functions on $V$ generated by homogeneous polynomial functions of degree $q$.
The following result is the natural generalization of the theorem of Chevalley and Shephard-Todd to the present context.
\begin{thm}\label{charac}
Let $V$ be a complex vector space, $n=\dim V$, and $G$ be a finite group of graded automorphisms of $S_q[V^*]$, the algebra generated by homogeneous polynomial functions on $V$ of degree $q$. Then $(G,q)$ is a projective reflection group if and only if the invariant algebra $S_q[V^*]^G$ is generated by $n$ algebraically independent homogeneous elements.
\end{thm}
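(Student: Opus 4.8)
The plan is to deduce the statement from the theorem of Chevalley and Shephard--Todd, after identifying $G$ — which is given only as an abstract group of graded automorphisms of $S_q[V^*]$ — with a quotient of a genuine linear group on $V$. Two elementary observations drive everything. First, $S_q[V^*]$ coincides with $S[V^*]^{C_q}$, the subalgebra of polynomials on $V$ all of whose homogeneous components have degree divisible by $q$: indeed the generator $\zeta_q I$ of $C_q$ acts on $S^d(V^*)$ by the scalar $\zeta_q^{-d}$. Second, and consequently, for any finite $W\subseteq GL(V)$ with $C_q\subseteq W$ every $W$-invariant polynomial already lies in $S_q[V^*]$, so $S[V^*]^W\subseteq S_q[V^*]$ and hence
\[
S_q[V^*]^{W/C_q}=S_q[V^*]^{W}=S[V^*]^{W}.
\]
By Chevalley--Shephard--Todd this ring is generated by $n$ algebraically independent homogeneous elements — equivalently, is a polynomial ring, and then necessarily on $n$ generators since its Krull dimension equals $n$ — precisely when $W$ is a reflection group. (Rescaling the grading of $S_q[V^*]$ by the factor $q$ does not affect this property.)

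The missing ingredient I would isolate as a lemma: a rigidity property of the Veronese algebra asserting that \emph{the natural map $GL(V)\to\mathrm{Aut}_{\mathrm{gr}}(S_q[V^*])$, $g\mapsto\hat g$, is surjective and has kernel $C_q$.} Granting this, put $\widetilde G=\{g\in GL(V):\hat g\in G\}$. Then $\widetilde G$ is a finite subgroup of $GL(V)$ of order $q\,|G|$, it contains $C_q$, one has $\widetilde G/C_q=G$, and the given $G$-action on $S_q[V^*]$ is the restriction of the natural $\widetilde G$-action on $S[V^*]$. By the first paragraph, $S_q[V^*]^G=S[V^*]^{\widetilde G}$ is generated by $n$ algebraically independent homogeneous elements if and only if $\widetilde G$ is a reflection group, i.e.\ — by the very definition, since $C_q\subseteq\widetilde G$ and $G=\widetilde G/C_q$ — if and only if $(G,q)$ is a projective reflection group. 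This disposes of both implications at once.

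It remains to sketch the lemma. The kernel is immediate: $\hat g=\mathrm{id}$ forces $g$ to act on $V$ by a scalar whose $q$-th power is $1$. For surjectivity, take $\psi\in\mathrm{Aut}_{\mathrm{gr}}(S_q[V^*])$. Being a graded algebra map, $\psi$ is determined by its restriction to the degree-one piece $S^q(V^*)$, and it induces an automorphism of $\mathrm{Proj}\,S_q[V^*]=\mathrm{Proj}\,S[V^*]$, an $(n-1)$-dimensional projective space; that automorphism is linear, hence induced by some $g\in GL(V)$, and after replacing $\psi$ by $\hat{g}^{-1}\circ\psi$ we may assume $\psi$ acts as the identity on $\mathrm{Proj}$. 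Under the degree-$q$ embedding $\mathrm{Proj}\,S_q[V^*]\hookrightarrow\mathbb{P}(S^q(V^*))$ this says that $\psi|_{S^q(V^*)}$, regarded in $PGL(S^q(V^*))$, fixes the Veronese variety $\{\,[\ell^q]:0\neq\ell\in V^*\,\}$ pointwise; for $n\ge2$ this variety is irreducible, nondegenerate and of positive dimension, so the only element of $PGL(S^q(V^*))$ fixing it pointwise is the identity. Hence $\psi$ is a scalar on $S^q(V^*)$ and, the algebra being generated in degree one, a scalar on each $S^{qk}(V^*)$; thus $\psi=\widehat{\nu I}$ for a suitable $\nu$. (For $n\le1$ the algebra $S_q[V^*]$ is a polynomial ring in one variable and the claim is trivial.)

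I expect the Veronese rigidity lemma to be the one genuinely nontrivial step; everything else is a repackaging of Chevalley--Shephard--Todd together with the identity $S_q[V^*]=S[V^*]^{C_q}$. Within the lemma, the point needing care is the passage ``$\psi$ fixes the Veronese pointwise $\Rightarrow$ $\psi|_{S^q(V^*)}$ is scalar'': one may argue abstractly (an irreducible nondegenerate positive-dimensional subvariety of projective space contains a projective frame, so has trivial pointwise stabiliser in $PGL$), or concretely by restricting to the plane spanned by two independent powers $\ell^q,{\ell'}^q$, where the Veronese becomes a rational normal curve of degree $q$ and a short polarisation/degree computation forces the factor $c(\ell)$ in $\psi(\ell^q)=c(\ell)\ell^q$ to be independent of $\ell$. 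One must also keep the duality $GL(V)\leftrightarrow GL(V^*)$ straight throughout, but it preserves reflection groups and creates no real difficulty.
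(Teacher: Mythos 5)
Your reduction is the same as the paper's in outline: both proofs hinge on showing that every graded automorphism of $S_q[V^*]$ is induced, up to the kernel $C_q$, by an element of $GL(V)$, after which the identity $S_q[V^*]^{G}=S[V^*]^{\widetilde G}$ (valid because $C_q\subseteq\widetilde G$ forces $S[V^*]^{\widetilde G}\subseteq S[V^*]^{C_q}=S_q[V^*]$) lets Chevalley--Shephard--Todd do all the work. Where you genuinely diverge is in how this lifting/rigidity statement is proved. The paper does it by elementary commutative algebra: Lemma \ref{qpo} shows via a multiplicity argument on the relations $F_{i-1}^2=F_{i-2}F_i$ that a graded automorphism sends $q$-th powers of linear forms to $q$-th powers, Lemma \ref{qgen} (a Vandermonde argument) shows these powers generate $S_q[V^*]$, and the lift $f\in GL(V^*)$ is then constructed by hand, yielding the coset $W_g$ of $C_q$ and the group $W=\cup_g W_g$. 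You instead invoke projective geometry: functoriality of $\mathrm{Proj}$ and $\mathrm{Proj}$ of the Veronese subring being $\mathbb{P}^{n-1}$, the theorem $\mathrm{Aut}(\mathbb{P}^{n-1})=PGL_n(\mathbb{C})$, and triviality of the pointwise stabiliser of an irreducible nondegenerate subvariety. Your route is conceptually crisp and isolates the cleanest possible statement (surjectivity of $GL(V)\to\mathrm{Aut}_{\mathrm{gr}}(S_q[V^*])$ with kernel $C_q$, which is also implicit in the paper since finiteness is never used in the lifting), at the cost of resting on nontrivial standard theorems where the paper is self-contained and elementary. Two small points to tighten: what ``$\psi$ induces the identity on $\mathrm{Proj}$'' literally gives you is that $\psi$ preserves each homogeneous prime $\mathfrak p_v$, hence each hyperplane $\{F\in S^q(V^*):F(v)=0\}$, so it is the \emph{dual} map that fixes the Veronese of evaluation functionals $[\mathrm{ev}_v]$ pointwise --- the eigenspace-plus-irreducibility argument then still gives that $\psi|_{S^q(V^*)}$ is scalar, which is all you need (you flag the duality issue, but this is the precise form it takes); and in the direction ``$(G,q)$ projective reflection $\Rightarrow$ polynomial invariants'' you should note the one-line verification that any witness $W$ (with $C_q\subseteq W$, $G=W/C_q$) coincides with your full preimage $\widetilde G$, since $W\subseteq\widetilde G$ and both have order $q|G|$.
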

\begin{proof}
Let $(G,q)$ be a projective reflection group acting on $S^q(V)$ and $W\in GL(V)$ be a reflection group such that $G\cong W/C_q$. Then, since $C_q\subseteq W$, we have that $S[V^*]^W\subseteq S_q[V^*]$. It follows that $S[V^*]^W=S_q[V^*]^G$ and so $S_q[V^*]^G$ is generated by $n$ algebraically independent homogeneous elements by the corresponding result for reflection groups.
To prove the converse we need the following two lemmas.
\begin{lem}\label{qpo}
Let $\varphi$ be a graded automorphism of $S_q[V^*]$. Then for any $x\in V^*$ there exists $z\in V^*$ such that $\varphi (x^q)=z^q$. Moreover, for any $y\in V^*$ we have that $z^{i}$ divides $\varphi(x^i y^{q-i})$.
\end{lem}
\begin{proof}
If $\dim V=1$ then $\varphi(x^q)=\lambda x^q=(\lambda^{1/q}x)^q$ for
some $\lambda\in \mc$, where $\lambda^{1/q}$ is a $q$-th root of
$\lambda$. If $\dim V\geq 2$ let $y\in V^*$ be such that $x$ and $y$
are linearly independent. For $i=0,\ldots,q$ we let
$F_i=x^{i}y^{q-i}$. We observe that for all $i=2,\ldots,q $ we have
$F_{i-1}^2=F_{i-2}F_{i}$. So we also have 
\begin{equation}\label{eq}
 \varphi(F_{i-1})^2=\varphi(F_{i-2})\varphi(F_{i}).
\end{equation}
Since $\varphi$ is an automorphism we have that $\varphi(F_0)$ and $\varphi(F_q)$ are linearly independent. It follows that there exists an irreducible polynomial function $w$ which divides $\varphi(F_0)$ with multiplicity $m_0$ and $\varphi(F_q)$ with multiplicity $m_q$, with $m_0<m_q$. \\
We let $m_i$, with $i=0,\ldots,q$, be the multiplicity of $w$ as a factor of $\varphi(F_i)$ and we claim that $m_i=im_1-(i-1)m_0$ for all $i=0,\ldots,q$. This is clear if $i=0,1$. If $i>1$ then, by \eqref{eq}, we have $2m_{i-1}=m_{i-2}+m_{i}$. So $m_i=2m_{i-1}-m_{i-2}$ and, by a recursive argument, we conclude the proof of our claim. \\
If $m_{1}\leq m_0$ we have that $m_q=qm_{1}-(q-1)m_0\leq
qm_0-(q-1)m_0=m_0$ which contradicts our assumption $m_0<m_q$. So we
have $m_{1}\geq m_0+1$ and hence $$m_i=im_{1} -(i-1)m_0\geq i(m_0+1)
-(i-1)m_0=i+m_0\geq i.$$ It follows that $m_q=q$ and so
$F(x^q)=\lambda w^q=z^q$, where $z=\lambda^{1/q}w$.
\end{proof}
The following result is probably already known but we offer a proof for lack of a suitable reference.
\begin{lem}\label{qgen}
The algebra $S_q[V^*]$ is generated by the $q$-th powers of elements in $V^*$.
\end{lem}
\begin{proof}
We proceed by double induction on $q$ and $n=\dim V$. If $q=1$ or $n=1$ the result is trivial. So let $n=2$. If $\{x,y\}$ is a basis for $V^*$ then it is enough to show that every monomial $x^iy^{q-i}$ is a linear combination of $q$-th powers. So let $f_i=\binom{q}{i}x^iy^{q-i}$. Then for any $k=1,\ldots,q+1$ we have
$$
(kx+y)^q=\sum_{i=0}^q k^i f_i.
$$
Since the matrix of the coefficients is a Vandermonde matrix (and so it is invertible) we can express the $f_i$'s as a linear combination of the $(kx+y)^q$'s and we are done.
Now let $n>2$ and $M\in S_q[V^*]$ be of degree $q$. By induction on $q$ we can express $M$ as a linear combination of elements of the form $xy^{q-1}$, for some $x,y\in V^*$. But $xy^{q-1}$ belongs to $S^q(<x,y>)$ and the result follows from the case $n=2$.
\end{proof}

\noindent \emph{Proof of Theorem \ref{charac} (cont.)}. Let $x_0$ be any fixed non-zero element in $V^*$ and $g\in G$. By Lemma \ref{qpo} we have that $g(x_0^q)=z^q$ for some $z\in V^*$. Then, for any $y\in V^*$ we let
$$f(y)\eqdef g(x_0^{q-1}y)/z^{q-1}.$$
Note that this is well-defined by Lemma \ref{qpo} and that $f\in GL(V^*)$. We claim that for any $y\in V^*$ we have $g(y^q)=(f(y))^q$. By (the proof of) Lemma \ref{qpo} we know that $g((x_0+y)^q)$ belongs to the $q$-th symmetric power of a 2-dimensional subvector space of $V^*$ generated by $z$ and another element $w\in V^*$ such that $g(x_0^iy^{q-i})$ is a scalar multiple of $z^i w^{q-i}$. Lemma \ref{qpo} also tells us that $g((x_0+y)^q)$ is a $q$-th power of an element of the form $\lambda z+\mu w$. So we have
\begin{eqnarray*}
 g((x_0+y)^q) &=& g(x_0^q)+qg(x_0^{q-1}y)+\ldots+g(y^q)\\
 &=&\lambda^qz^q+q\lambda^{q-1}z^{q-1}\mu w+\ldots+\mu^q w^q
\end{eqnarray*}
Since the elements $z^i w^{q-i}$ are linearly independent we deduce that the corresponding elements in the previous equality must be the same. In particular we have $g(x_0^q)=\lambda^q z^q$, $g(x_0^{q-1} y)=\lambda^{q-1}z^{q-1}\mu w$ and $g(y^q)=\mu^q w^q$. By the first equality we have $\lambda^q=1$, by the second equality we have $f(y)=\lambda^{q-1}\mu w$ and so by the third equality we deduce that $(f(y))^q=(\lambda^{q-1}\mu w)^q=\mu^q w^q=g(y^q)$.
From this and Lemma \ref{qgen} we deduce that $g(x_1\cdots x_q)=f(x_1)\cdots f(x_q)$ for all $x_1,\ldots,x_q\in V^*$. We observe that  in the previous formula we can substitute $f$ with its scalar multiple $\zeta_q^i f$ and we denote by $W_g=\{l\in GL(V^*):g(v_1\cdots v_q)=l(v_1)\cdots l(v_q)\textrm{ for all }v_1,\ldots,v_q\in V^*\}$. It is clear from our discussion that $|W_g|=q$ (being a coset of $C_q$ in $GL(V^*)$) and that $W=\cup W_g$ is a subgroup of $GL(V^*)$. We also observe that $W_1=C_q$. This implies that $S[V^*]^W\subset S_q[V^*]$. It follows that $S[V^*]^W=S_q[V^*]^G$. By the theorem of Chevalley and Shephard-Todd we deduce that $W$ is a reflection group and so, by definition, $(G,q)$ is a projective reflection group since, clearly, $G=W/C_q$.
 
\end{proof}

\section{The coinvariant algebra of a projective reflection group}\label{coal}
If $W$ is a reflection group acting on a vector space $V$ then the coinvariant algebra of $W$ is the quotient of the ring of polynomial functions by the ideal generated by homogeneous invariant polynomial functions of positive degree. It is a rich and fundamental tool in the study of the invariant theory of $W$ (see, e.g., \cite[\S3]{H}). As we observed in \S \ref{maindef} we have an action of any projective
reflection group $(G,q)$ on the algebra $S_q[V^*]$ generated by the homogeneous polynomial functions on $V$ of degree $q$. We denote by $I_+^G$ the ideal of $S_q[V^*]$ generated by homogeneous elements in $S_q[V^*]^G$ of positive degree, and define the \emph{coinvariant algebra} of $G$ by
$$
R^G\eqdef S_q[V^*]/I_+^G.
$$


The coinvariant algebra $R^G$ is a graded representation of $G$ and, as it was the case for reflection groups, it is isomorphic as a $G$-module to the group algebra $\mathbb C G$.
\begin{prop}\label{reg}
If $G$ is any projective reflection group, we have an isomorphism of $G$-modules $$R^G\cong \mc G.$$
\end{prop}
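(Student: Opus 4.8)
The plan is to reduce the statement to the classical fact that the coinvariant algebra of a genuine reflection group is isomorphic to the regular representation, by lifting the situation from $(G,q)$ to the reflection group $W$ with $C_q \subseteq W$ and $G = W/C_q$. First I would fix such a $W \subset GL(V)$, so that $C_q$ acts trivially on $S_q[V^*]$ and the $W$-action on $S_q[V^*]$ factors through $G$. Since $C_q \subseteq W$, we have $S[V^*]^W \subseteq S_q[V^*]$, and as observed already in the proof of Theorem \ref{charac}, $S[V^*]^W = S_q[V^*]^G$; hence the ideal $I_+^G$ of $S_q[V^*]$ and the ideal $I_+^W$ of $S[V^*]$ generated by positive-degree invariants are compatible, and one checks that $S_q[V^*] \cap I_+^W = I_+^G$.

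The key step is then to identify $R^G = S_q[V^*]/I_+^G$ with the $C_q$-invariant part of the classical coinvariant algebra $R^W = S[V^*]/I_+^W$. Concretely, the inclusion $S_q[V^*] \hookrightarrow S[V^*]$ induces a map $R^G \to R^W$ which is injective (by the previous paragraph) and has image $(R^W)^{C_q}$, because $S[V^*] = \bigoplus_{i} S_q[V^*]\cdot (\text{degree-}i\text{ part})$ decomposes $S[V^*]$ into $C_q$-eigenspaces with $S_q[V^*]$ being exactly the $C_q$-fixed subspace, and passing to the quotient by the $C_q$-stable (indeed $W$-stable) ideal $I_+^W$ preserves this. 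Thus $R^G \cong (R^W)^{C_q}$ as graded $G = W/C_q$-modules.

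Finally I would invoke the classical Chevalley theorem for the reflection group $W$: $R^W \cong \mathbb C W$ as $W$-modules. Taking $C_q$-invariants on both sides, and using that $C_q$ is central in $W$ so that $(\mathbb C W)^{C_q} \cong \mathbb C(W/C_q) = \mathbb C G$ as $W/C_q$-modules (the $C_q$-fixed vectors in $\mathbb C W$ are spanned by the $C_q$-orbit sums, which biject with elements of $G$, and left multiplication by $W$ descends to the regular action of $G$), we conclude $R^G \cong (R^W)^{C_q} \cong \mathbb C G$ as $G$-modules, as desired.

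The main obstacle I anticipate is the middle step: verifying cleanly that taking the $C_q$-fixed subspace commutes with passing to the quotient by $I_+^W$, i.e. that $(S[V^*]/I_+^W)^{C_q} = S_q[V^*]/(S_q[V^*]\cap I_+^W)$. This is where one must use that $C_q$ is finite (so taking invariants is exact over $\mathbb C$) together with the explicit graded/eigenspace decomposition of $S[V^*]$ under the scalar action of $C_q$; once that is in place, the rest is a direct translation of Chevalley's theorem through the quotient $W \twoheadrightarrow G$.
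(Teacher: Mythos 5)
Your proposal is correct and takes essentially the same route as the paper: lift to a reflection group $W$ with $C_q\subseteq W$ and $G=W/C_q$, identify $R^G$ with the part of $R^W$ sitting in degrees divisible by $q$ (equivalently, the $C_q$-fixed subspace, which is where your Reynolds-operator/eigenspace argument for $S_q[V^*]\cap I_+^W=I_+^G$ is exactly the detail the paper glosses), and then invoke the classical isomorphism $R^W\cong\mathbb{C}W$. The only difference is the final bookkeeping: the paper verifies that each irreducible representation $\phi$ of $G$ occurs in $R^G$ with multiplicity $\dim\phi$ because its isotypic component in $R^W$ lies in degrees divisible by $q$, while you take $C_q$-invariants of $R^W\cong\mathbb{C}W$ and identify $(\mathbb{C}W)^{C_q}\cong\mathbb{C}G$ via orbit sums; both are valid and amount to the same argument.
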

\begin{proof}
Let $W\in GL(V)$ be a reflection group containing $C_q$ such that
$G=W/C_q$ and recall that the invariants of $W$ coincide with the
invariants of $G$. It follows that the coinvariant algebra $R^G$ is the subalgebra of $R^W$
given by the elements of degree multiple of $q$. 
Now let $\phi$ be an irreducible representation of $G$. We have to
prove that $\phi$ appears in the irreducible decomposition of the
$G$-module $R^G$ with multiplicity equal to its dimension. The representation $\phi$ is
also an irreducible representation of $W$ and so it appears in the
coinvariant algebra of $R^W$, with multiplicity equal to its
dimension. Then, since $\phi$ is a representation of $G$ it must be
fixed by $C_q$ and so its isotypical component appears in the
homogeneous pieces of degree multiple of $q$, i.e. in $R^G$. The
result follows.
\end{proof}

\section{The groups $G(r,p,q,n)$}\label{grpqn}
In this section we introduce an important class of projective reflection groups: they are those projective reflection groups arising as quotients (by scalar subgroups) of all non exceptional irreducible complex reflection groups.\\
We know by the work of Shephard-Todd \cite{ST} that all but a finite number of irreducible reflection groups are the groups $G(r,p,n)$ that we are going to describe. If $A$ is a matrix with complex entries we denote by $|A|$ the real matrix whose entries are the absolute values of the entries of $A$.
The groups $G(r,n)= G(r,1,n)$ are given by all $n\times n$ matrices satysfying the following conditions: 
\begin{itemize}
\item the non-zero entries are $r$-th roots of unity;
\item there is exactly one non-zero entry in every row and every column (i.e. $|A|$ is a permutation matrix).
\end{itemize}
If $p$ divides $r$ then the reflection group $G(r,p,n)$ is the subgroup of $G(r,n)$ given by all matrices $A\in G(r,n)$ such that $\frac{\det A}{\det |A|}$ is a $\frac{r}{p}$-th root of unity. 

It is easy to characterize all possible scalar subgroups of the groups $G(r,p,n)$: in fact we can easily observe that the scalar matrix $\zeta_qI$ belongs to $G(r,p,n)$ if and only if $q|r$ and $pq|rn$.

\begin{defn}
Let $r,p,q,n\in \mathbb N$ be such that $p|r$, $q|r$ and $pq|rn$. Then we let
$$
G(r,p,q,n)\eqdef G(r,p,n)/C_q,
$$
where $C_q$ is the cyclic group generated by $\zeta_q I$.
\end{defn}
We observe that starting from the wreath product group $G(r,n)$ we
could have done first the quotient by the subgroup $C_q$ and then
taken the subgroup of this quotient consisting of elements $A$
satysfing $\frac{\det A}{\det |A|}$ is a $\frac{r}{p}$-th root of
unity (note that this requirement would have been well-defined). We
would have obtained the same group $G(r,p,q,n)$ and one of the targets
of this paper is to convince the reader that these two operations of
``taking subgroups'' and ``taking quotients'' have the same dignity
and for many aspects their are dual to each other. In fact, we may note
the symmetry on the conditions for the parameters $p$ and $q$ in the
definition of the group $\qc$. In particular if $G=(\qc,q)$ then the
projective reflection group $G^*\eqdef (G(r,q,p,n),p)$, where the roles of the parameters
$p$ and $q$ are interchanged, is always well-defined. The classical
Weyl groups of type $A$, $B$ and $D$ are respectively in this notation
the groups $G(1,1,1,n), G(2,1,1,n)$ and $G(2,2,1,n)$. Note that while
Weyl groups of type $A$ and $B$ are fixed by the $*$-operator, Weyl
groups of type $D$ and general reflection groups are not. The main
goal of this work is to show that several aspects of the invariant
theory of a projective reflection group $G$ of the form $(\qc,q)$ is
strongly related to and easily described by the combinatorics of
$G^*$. From now on, when talking about a projective reflection group
we always refer to a group of the form $(\qc,q)$ (and we will denote it
simply by $\qc$) unless otherwise specified.

If $g\in G(r,n)$ we write $g=[\sigma;c_1,\ldots,c_n]$ if the non-zero entry in the $i$-th row of $g$ is $\zeta_r^{c_i}$ and $\sigma\in S_n$ is the permutation associated to $|g|$ (i.e. $\sigma(i)=j$ if $g_{i,j}\neq 0$). We observe that $g$ determines $\sigma$ uniquely while the integers $c_i$ are determined only modulo $r$. We also note that in this notation we have that $g=[\sigma;c_1,\ldots,c_n]$ belongs to $G(r,p,n)$ if and only if $\sum c_i\equiv 0 \mod p$.\\
If $g\in \qc$ we also write $g=[\sigma;c_1,\ldots,c_n]$ to mean that $g$ can be represented by $[\sigma;c_1,\ldots,c_n]$ in $G(r,p,n)$ and we let $c(g)\eqdef \sum c_i$ (note that $c(g)$ is defined modulo $\GCD(r,\frac{rn}{q})$ which is a multiple of $p$).
One may ask for which choice of the parameters one has $G\cong G^*$ as abstract groups. This is the main target of this section.
\begin{lem}\label{represe}
Let $d=\GCD (\frac{rn}{q}, p')|p$ for some $p'|r$. Then every element in $\qc$ has exactly $\frac{qd}{p'}$ representatives in $G(r,p',n)$.
\end{lem}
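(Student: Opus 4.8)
The plan is to work entirely with explicit matrix representatives. Fix $h\in\qc=G(r,p,n)/C_q$ and pick any matrix $[\sigma;c_1,\ldots,c_n]\in G(r,p,n)$ representing it, so that $\sum_i c_i\equiv 0\pmod p$. Since $C_q$ is generated by $\zeta_q I=[\mathrm{id};\frac rq,\ldots,\frac rq]$, the matrices representing $h$ (all of them lying in $G(r,p,n)$) are exactly $[\sigma;c_1+k\frac rq,\ldots,c_n+k\frac rq]$ for $0\le k<q$, and these $q$ matrices are pairwise distinct. So I would reduce the lemma to counting the $k\in\{0,\ldots,q-1\}$ for which $\sum_i(c_i+k\frac rq)\equiv 0\pmod{p'}$, i.e.\ for which $km\equiv-\sum_i c_i\pmod{p'}$, where $m=\frac{rn}{q}$.

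Next I would analyze this linear congruence. Put $d=\GCD(m,p')$. First, it is solvable: the hypothesis $d\mid p$ together with $p\mid\sum_i c_i$ gives $d\mid\sum_i c_i$, and $d\mid m$ by definition, so $-\sum_i c_i$ lies in the subgroup of $\mathbb Z/p'\mathbb Z$ generated by $m$, which is $d\mathbb Z/p'\mathbb Z$. By the usual theory of linear congruences the solution set in $\mathbb Z$ is then a single residue class modulo $\frac{p'}{d}$, so the number we want equals the number of members of that class lying in $\{0,\ldots,q-1\}$; in particular the answer will not depend on the chosen representative of $h$.

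The one point that requires care — and the main obstacle — is the divisibility $\frac{p'}{d}\mid q$, since once this is known the count above is precisely $q/(p'/d)=\frac{qd}{p'}$, uniformly in $h$. I would derive it from the standing hypothesis $p'\mid r$: this gives $p'\mid rn$, while $qd=q\,\GCD(m,p')=\GCD(qm,qp')=\GCD(rn,qp')$, which is a multiple of $p'$; dividing through by $d$ (legitimate since $d\mid p'$) yields $\frac{p'}{d}\mid q$. Combining the three steps proves the lemma. As a sanity check I would note that the case $p'=p$ forces $d=\GCD(rn/q,p)=p$ (because $pq\mid rn$), so $\frac{qd}{p'}=q$, recovering the obvious fact that every element of $\qc$ has exactly $q$ representatives in $G(r,p,n)$.
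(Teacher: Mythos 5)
Your proposal is correct and takes essentially the same route as the paper: both reduce the statement to counting the $k\in\{0,\ldots,q-1\}$ solving the linear congruence $k\,\frac{rn}{q}\equiv -c(g)\pmod{p'}$ among the $q$ representatives of $g$. The only differences are in the final bookkeeping — you prove $\frac{p'}{d}\mid q$ directly via $qd=\GCD(rn,qp')$ and count a residue class modulo $p'/d$ inside a window of length $q$, whereas the paper counts $d$ solutions modulo $p'$, lifts to modulo $r$, and divides using the a priori periodicity modulo $q$ — and you make explicit the solvability step ($d\mid p$ and $p\mid c(g)$) that the paper leaves implicit.
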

\begin{proof}
Let $g\in \qc$ and $g_0,\ldots,g_{q-1}\in G(r,p,n)$ be the $q$ distinct representatives of $g$. After a suitable reordering of these elements we have that $c(g_i)=c(g_0)+\frac{rn}{q}i$. So we have to compute the number of solutions modulo $q$ of the equation
$$
c(g_0)+\frac{rn}{q}i\equiv 0 \mod p'.
$$
This equation has $d$ solutions modulo $p'$ and hence $\frac{r}{p'}d$ solutions modulo $r$ and so $\frac{qd}{p'}$ solutions modulo $q$ (note that we know a priori that the solutions are defined modulo $q$).

\end{proof}
\begin{prop}\label{suff}
Let $\qc$ and $G(r,p',q',n)$ have the same cardinality (i.e. $pq=p'q'$) and assume that $\GCD (\frac{rn}{q},p')=\GCD(\frac{rn}{q'},p)$. Then
$$
\qc\cong G(r,p',q',n). 
$$
\end{prop}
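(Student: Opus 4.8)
The plan is to produce an explicit isomorphism $\qc \cong G(r,p',q',n)$ by passing through the common overgroup $G(r,n)=G(r,1,1,n)$. The key observation is that every element of $\qc$ lifts to $G(r,p,n)\subseteq G(r,n)$, and by Lemma~\ref{represe} (applied with $p'=p$) each element of $\qc$ has exactly $q$ representatives in $G(r,p,n)$, differing by the $C_q$ we quotiented out. So both $\qc$ and $G(r,p',q',n)$ are obtained as subquotients of $G(r,n)$: $\qc = G(r,p,n)/C_q$ and $G(r,p',q',n)=G(r,p',n)/C_{q'}$. Since $pq=p'q'$, these groups have the same cardinality $\frac{r^n n!}{pq}=\frac{r^n n!}{p'q'}$, so it suffices to exhibit a surjective (or injective) homomorphism between them.

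First I would set up the relevant arithmetic. For $g=[\sigma;c_1,\dots,c_n]\in G(r,n)$ the only invariants that matter modulo the scalar subgroups are $\sigma$ and $c(g)=\sum c_i$. Membership in $G(r,p,n)$ is $c(g)\equiv 0 \bmod p$; passing to the quotient by $C_q$ means $c(g)$ becomes well-defined only modulo $\GCD(r,\tfrac{rn}{q})$ (as noted in the text right before Lemma~\ref{represe}). So an element of $\qc$ is faithfully recorded by the pair $(\sigma, \bar c)$ where $\bar c \in \mathbb{Z}/\GCD(r,\tfrac{rn}{q})\mathbb{Z}$ lies in the coset determined by $c\equiv 0 \bmod p$; concretely the color-sums of elements of $\qc$ form the subgroup $p\,\mathbb{Z}/\GCD(r,\tfrac{rn}{q})\mathbb{Z}$ inside $\mathbb{Z}/\GCD(r,\tfrac{rn}{q})\mathbb{Z}$, which has order $\GCD(r,\tfrac{rn}{q})/p = \GCD(\tfrac rp, \tfrac{rn}{pq})$. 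I would compute this order explicitly and likewise compute the corresponding order $\GCD(r,\tfrac{rn}{q'})/p'$ for $G(r,p',q',n)$, and check that the hypothesis $\GCD(\tfrac{rn}{q},p')=\GCD(\tfrac{rn}{q'},p)$ together with $pq=p'q'$ forces these two orders to coincide. (This is the cardinality count again, now seen at the level of the color-sum subgroups, with the permutation part $S_n$ contributing the same factor $n!$ on both sides.)

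Next I would build the map. Define $\Phi: G(r,p,n)\to G(r,p',n)/C_{q'}$ as follows: the permutation part $\sigma$ is left untouched, and on the color part one needs a group homomorphism from $\{c \in \mathbb{Z}/r : c\equiv 0 \bmod p\}$ to $\{c\in \mathbb{Z}/r: c\equiv 0\bmod p'\}/(\tfrac{rn}{q'}\mathbb{Z})$ that kills exactly the subgroup $\tfrac{rn}{q}\mathbb{Z}$ (so that it descends to $\qc$) and is onto. Since all groups in sight are cyclic, such a homomorphism exists precisely when the source, after modding by $\tfrac{rn}{q}\mathbb{Z}$, has the same order as the target — and that is exactly the order equality established in the previous paragraph. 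I would write down the homomorphism on a generator: send the generator $p$ of $p\mathbb{Z}/r\mathbb{Z}$ to a chosen generator of the target cyclic group, check it is well-defined, injective on the quotient, and surjective, and verify $\Phi([\sigma;c])\Phi([\tau;d]) = \Phi([\sigma;c][\tau;d])$ — this is routine since multiplication in $G(r,n)$ adds color vectors (after the twist by $\sigma$) and sums of colors add, so $c(gh)=c(g)+c(h)$, making the color-sum a homomorphism to $\mathbb{Z}/r$ that $\Phi$ post-composes with.

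The main obstacle is the middle step: verifying that the hypothesis $\GCD(\tfrac{rn}{q},p')=\GCD(\tfrac{rn}{q'},p)$ is the precise condition making the two cyclic color-sum groups isomorphic, i.e. forcing $\GCD(r,\tfrac{rn}{q})/p = \GCD(r,\tfrac{rn}{q'})/p'$. This is an elementary but slightly delicate gcd manipulation; I expect one clean way is to write $r = pq\cdot m/n$-type relations carefully, or better, to note $\GCD(r,\tfrac{rn}{q}) = \GCD(r, \tfrac{rn}{q})$ and factor through $\GCD(p\cdot\tfrac rp,\, p'\cdot\text{stuff})$, using $pq=p'q'$ to swap $p\leftrightarrow p'$ and $q\leftrightarrow q'$ inside the gcds. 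Everything else — that lifting to $G(r,n)$ behaves well, that scalar subgroups are exactly the $C_q$, that the quotients have the stated cardinality — is already in the excerpt (the paragraph characterizing scalar subgroups of $G(r,p,n)$, the Definition of $\qc$, and Lemma~\ref{represe}), so once the gcd identity is in hand the isomorphism drops out.
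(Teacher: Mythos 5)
There is a genuine gap, and it occurs at the very first structural step: you claim that an element of $\qc$ ``is faithfully recorded by the pair $(\sigma,\bar c)$'' where $\bar c$ is the color \emph{sum}. This is false for $n\geq 2$. An element $g=[\sigma;c_1,\ldots,c_n]$ of $\qc$ is determined by $\sigma$ together with the whole color vector $(c_1,\ldots,c_n)$ modulo the diagonal shift by $(r/q,\ldots,r/q)$; the kernel of the projection to $S_n$ has order $r^n/(pq)$, not the order of a cyclic group of color sums (compare: the number of pairs $(\sigma,\bar c)$ is at most $n!\,r$, while $|\qc|=r^n n!/(pq)$). Consequently the map $\Phi$ you propose --- keep $\sigma$, apply a homomorphism of cyclic groups to the color sum --- is not a map on group elements at all: knowing $\sigma$ and a target color sum does not specify an element of $G(r,p',q',n)$. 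Any attempted repair (say, altering individual coordinates of the color vector to adjust the sum) must in addition be $S_n$-equivariant, because multiplication in $G(r,n)$ permutes the color coordinates, and your proposal never addresses this. Moreover, an argument that only tracks $\sigma$ and the color sum cannot in principle succeed: by Theorem \ref{isomo} two groups $\qc$ and $G(r,p',q',n)$ of the same cardinality need not be isomorphic, and the invariants you retain cannot see the difference, so the ``delicate gcd manipulation'' you identify as the main obstacle is not where the actual content of the hypothesis lies.

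The paper's proof takes a different and essentially forced route: it never builds a direct map between the two groups. Setting $d=\GCD(\frac{rn}{q},p')=\GCD(\frac{rn}{q'},p)$ (so $d\mid p,p'$), it introduces the intermediate group $G(r,\tilde p,\tilde q,n)$ with $\tilde p=\mathrm{lcm}(p,p')$ and $\tilde q=\frac{qd}{p'}=\frac{q'd}{p}$, checks it is well-defined with $\tilde p\tilde q=pq$, and uses Lemma \ref{represe} to show that sending $g\in\qc$ to the unique element of $G(r,\tilde p,\tilde q,n)$ sharing a representative with $g$ in $G(r,n)$ is a well-defined injective homomorphism, hence an isomorphism by the cardinality count; the same is done for $G(r,p',q',n)$. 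Note that even the naive ``common representative'' map from $\qc$ directly to $G(r,p',q',n)$ is generally not well-defined (two lifts of $g$ in $G(r,p',n)$ need not differ by an element of $C_{q'}$), which is exactly why the intermediate group with $\tilde p=\mathrm{lcm}(p,p')$ is needed; your proposal does not engage with this point either. To fix your write-up you would have to abandon the color-sum reduction and work with the full color lattice, at which point you are led back to something like the paper's representative-counting argument.
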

\begin{proof}
Let $d= \GCD (\frac{rn}{q},p')=\GCD(\frac{rn}{q'},p)$. Then we clearly have that $d|p,p'$ and so we can apply Lemma \ref{represe} to both $\qc$ and $G(r,p',q',n)$. Observe that $\frac{qd}{p'}=\frac{q'd}{p}$ since $pq=p'q'$. We let $\tilde p=\textrm{lcm}(p,p')$ and $\tilde q=\frac{qd}{p'}=\frac{q'd}{p}$. Then we can easily check that the group $G(r,\tilde p,\tilde q,n)$ is well-defined and that $\tilde p\tilde q=pq$. By Lemma \ref{represe} there exists a unique map
$\varphi:G(r,p,q,n)\longrightarrow G(r,\tilde p,\tilde q,n)$ such that $g$ and $\varphi(g)$ have a common representative in $G(r,n)$. We observe that this map is injective by construction and that it is a group homomorphism since we can perform the group operations by means of the common representatives. We conclude that it is a group isomorphism since the two groups have the same cardinality. We can similarly prove that $G(r,p',q',n)\equiv G(r,\tilde p, \tilde q,n)$ and the proof is complete.
\end{proof}

\begin{prop}\label{dec}
Let $\GCD(\frac{rn}{pq},\frac{r}{q})=\GCD(\frac{rn}{pq},\frac{r}{p})=\delta_1\delta_2$, where $\delta_1$ is a product of primes which appear with the same multiplicity in the prime factorizations of $p$ and $q$, and $\delta_2$ is a product of primes which appear with different multiplicities in the prime factorizations of $p$ and $q$. Then $$\qc\cong G(r,\delta_2p,q,n)\times C_{\delta_2}.$$
\end{prop}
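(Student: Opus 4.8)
The plan is to realize $G\eqdef\qc$ as an internal direct product of the normal subgroup $G(r,\delta_2p,q,n)$ and a central cyclic subgroup of order $\delta_2$. Throughout, $c\colon G(r,n)\to\mathbb Z/r\mathbb Z$ denotes the homomorphism $[\sigma;c_1,\dots,c_n]\mapsto\sum_i c_i$, so that $G(r,p',n)=c^{-1}(p'\mathbb Z/r\mathbb Z)$ for every $p'\mid r$. The first, and only genuinely delicate, step is to unwind the hypothesis $\GCD(\tfrac{rn}{pq},\tfrac rq)=\GCD(\tfrac{rn}{pq},\tfrac rp)$ by a prime-by-prime valuation count: writing $v_\ell$ for $\ell$-adic valuation, one shows that for every prime $\ell\mid\delta_2$ (that is, every $\ell$ occurring with different multiplicities in $p$ and $q$ and dividing the common GCD) one has $v_\ell(n)\le\min(v_\ell(p),v_\ell(q))$, and hence $v_\ell(\delta_2)=v_\ell(\tfrac{rn}{pq})$. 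From these two facts one reads off, again by valuations, that $\delta_2\mid\tfrac{rn}{pq}$, that $\GCD\big(\tfrac{rn}{pq\delta_2},\delta_2\big)=1$, that $\delta_2p\mid r$ and $\delta_2pq\mid rn$ (so $G(r,\delta_2p,q,n)$ is a legitimate projective reflection group and $C_q\subseteq G(r,\delta_2p,n)\subseteq G(r,p,n)$), and that $\delta_2p\mid\GCD(r,\tfrac{rn}q)$.

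Next I isolate the normal subgroup. Since $C_q\subseteq G(r,\delta_2p,n)\subseteq G(r,p,n)$ and changing the $C_q$-representative alters $c$ by a multiple of $c(\zeta_qI)=\tfrac{rn}q$, the map $c$ descends to a well-defined homomorphism $\bar c\colon G\to p\mathbb Z/\delta_2p\mathbb Z\cong C_{\delta_2}$ (well-definedness uses $\delta_2p\mid\tfrac{rn}q$). It is surjective, because $c\colon G(r,p,n)\to p\mathbb Z/r\mathbb Z$ already is and $p\mathbb Z/r\mathbb Z\twoheadrightarrow p\mathbb Z/\delta_2p\mathbb Z$; and its kernel is exactly $G(r,\delta_2p,q,n)=G(r,\delta_2p,n)/C_q$, since an element of $G$ lies in $\ker\bar c$ iff it has a representative in $G(r,p,n)$ of colour divisible by $\delta_2p$, i.e.\ one lying in $G(r,\delta_2p,n)$. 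Thus $1\to G(r,\delta_2p,q,n)\to G\xrightarrow{\bar c}C_{\delta_2}\to1$ is exact with normal kernel.

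For the central complement, let $z\in G$ be the image of the scalar matrix $\zeta_r^{\,r/(q\delta_2)}I$; this matrix lies in $G(r,p,n)$ because $n\cdot\tfrac r{q\delta_2}=p\cdot\tfrac{rn}{pq\delta_2}$ is a multiple of $p$ (and $q\delta_2\mid r$, so the exponent is an integer). Being a scalar, $z$ is central in $G$; its order is exactly $\delta_2$, since $\zeta_r^{\,kr/(q\delta_2)}I\in C_q$ iff $\tfrac rq\mid\tfrac{kr}{q\delta_2}$, i.e.\ iff $\delta_2\mid k$; and $\bar c(z)$ is the class of $p\cdot\tfrac{rn}{pq\delta_2}$, which under $p\mathbb Z/\delta_2p\mathbb Z\cong\mathbb Z/\delta_2\mathbb Z$ corresponds to the residue $\tfrac{rn}{pq\delta_2}\bmod\delta_2$, a unit by the first step. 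Hence $\bar c(z)$ generates $C_{\delta_2}$ and $\bar c$ restricts to an isomorphism $\langle z\rangle\cong C_{\delta_2}$.

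Finally, $\langle z\rangle$ is a central (hence normal) subgroup of $G$ which, by injectivity of $\bar c|_{\langle z\rangle}$, meets $G(r,\delta_2p,q,n)=\ker\bar c$ trivially; and $|\langle z\rangle|\cdot|G(r,\delta_2p,q,n)|=\delta_2\cdot\tfrac{|G|}{\delta_2}=|G|$, so these two normal subgroups generate $G$. Therefore $G$ is their internal direct product, i.e.\ $\qc\cong G(r,\delta_2p,q,n)\times C_{\delta_2}$. The main obstacle is entirely the arithmetic of the first step --- extracting from the equality of the two GCDs precisely the facts $v_\ell(n)\le\min(v_\ell(p),v_\ell(q))$ and $v_\ell(\delta_2)=v_\ell(\tfrac{rn}{pq})$ for $\ell\mid\delta_2$ --- on which the well-definedness of $\bar c$, the legitimacy of $G(r,\delta_2p,q,n)$, and, most importantly, the fact that $z$ has order exactly $\delta_2$ all rest. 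It is exactly for the primes with $v_\ell(p)=v_\ell(q)$, which make up $\delta_1$, that this candidate $z$ would fail to have the right order, which explains why only the $\delta_2$-part of $\GCD(\tfrac{rn}{pq},\tfrac rq)$ peels off as a direct factor.
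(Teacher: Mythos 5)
Your proof is correct and follows essentially the same route as the paper: the same two subgroups $G(r,\delta_2p,q,n)$ and the scalar subgroup generated by $\zeta_{\delta_2 q}$, with the decisive arithmetic input being $\GCD\bigl(\tfrac{rn}{pq\delta_2},\delta_2\bigr)=1$, obtained by the same prime-multiplicity analysis of the hypothesis. Your homomorphism $\bar c$ reducing the colour sum modulo $\delta_2 p$ is just a tidier packaging of the paper's direct computation that $\zeta_{\delta_2 q}^k\in G(r,\delta_2p,q,n)$ forces $\delta_2\mid k$, so the two arguments coincide in substance.
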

\begin{proof}
Observe that $G(r,\delta_2p,q,n)$ is well-defined (i.e. $\delta_2p|r$ and $\delta_2pq|nr$) and  that it is a
normal subgroup of $\qc$. The subgroup $C_{\delta_2}$ is the subgroup
generated by the scalar element $\zeta_{{\delta_2q}}$ which has order
$\delta_2$ in $\qc$. We only have to show that the two subgroups
$G(r,\delta_2p,q,n)$ and $C_{\delta_2}$ intersect trivially in
$\qc$. For this we have to verify that $\zeta_{{\delta_2q}}^k\in
G(r,\delta_2p,q,n)$ if and only if $k\equiv 0  \mod \delta_2$. With this in mind we observe that $\GCD(\frac{rn}{pq\delta_2},\delta_2)=1$. In fact
if $\pi$ is a prime such that $\pi^a||\delta_2$ (i.e. $\pi^a|\delta_2$
and $\pi^{a+1}\not |\delta_2$) then $\pi$ appears with different multiplicities in the prime factorizations of $p$ and $q$ and hence also in the prime factorizations of $\frac{r}{p}$ and $\frac{r}{q}$. Then, since $\GCD(\frac{rn}{pq},\frac{r}{q})=\GCD(\frac{rn}{pq},\frac{r}{p})$, we necessarily have that $\pi^a||\frac{rn}{pq}$ and so the condition $\GCD(\frac{rn}{pq\delta_2},\delta_2)=1$ follows. So
\begin{eqnarray*}
\zeta_{{\delta_2q}}^k\in G(r,\delta_2p,q,n)&\Leftrightarrow
&\frac{rn}{q\delta_2}k\equiv 0 \mod \delta_2 p\\
&\Leftrightarrow &\frac{rn}{pq\delta_2}k\equiv 0 \mod \delta_2\\
&\Leftrightarrow & k\equiv 0 \mod \delta_2.
\end{eqnarray*}
\end{proof}
\begin{thm}\label{isomo}
Let $G=\qc$, with $n\neq 2$. Then $$G\cong G^* \Longleftrightarrow \GCD \left(\frac{rn}{pq},\frac{r}{q}\right)=\GCD \left(\frac{rn}{pq},\frac{r}{p}\right).$$
\end{thm}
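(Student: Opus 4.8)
The plan is to prove the two implications by quite different means. As a preliminary I would observe that, comparing $\pi$-adic valuations prime by prime (write $a=v_\pi(p)$, $b=v_\pi(q)$, $c=v_\pi(r)$, $m=v_\pi(n)$), the stated condition $\GCD(\frac{rn}{pq},\frac r q)=\GCD(\frac{rn}{pq},\frac r p)$ holds at $\pi$ if and only if either $a=b$ or $m\le\min(a,b)$; in particular it is equivalent to $\gcd(n,p)=\gcd(n,q)$. This reformulation is what makes both sides of the theorem computable.

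For the ``$\Leftarrow$'' direction, assume the $\GCD$ equality and let $\delta_1\delta_2$ denote the common value, decomposed as in Proposition \ref{dec}. Both the hypothesis and the factors $\delta_1,\delta_2$ are symmetric under interchanging $p$ and $q$, so Proposition \ref{dec} applies to $G$ and to $G^*=G(r,q,p,n)$ and yields $G\cong G(r,\delta_2p,q,n)\times C_{\delta_2}$ and $G^*\cong G(r,\delta_2q,p,n)\times C_{\delta_2}$. It therefore suffices to produce an isomorphism $G(r,\delta_2p,q,n)\cong G(r,\delta_2q,p,n)$, and for this I would apply Proposition \ref{suff} to the pairs $(\delta_2p,q)$ and $(\delta_2q,p)$: the two groups have equal cardinality $\frac{n!\,r^n}{\delta_2pq}$, and the remaining hypothesis $\GCD(\frac{rn}{q},\delta_2q)=\GCD(\frac{rn}{p},\delta_2p)$ is checked one prime at a time. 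At primes with $a=b$ one has $v_\pi(\delta_2)=0$ and the equality is immediate; at primes with $a\neq b$ the hypothesis forces $m\le\min(a,b)$, so that $v_\pi(\delta_2)=c+m-a-b$ and both sides acquire $\pi$-valuation $c+m-\max(a,b)$.

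For the ``$\Rightarrow$'' direction I would argue by contraposition, distinguishing $G$ from $G^*$ by their abelianizations. Writing $G(r,p,n)=T_p\rtimes S_n$ with $T_p=\{(c_1,\dots,c_n)\in(\mathbb Z/r)^n:\sum c_i\equiv 0\bmod p\}$ and $S_n$ the group of permutation matrices acting by permuting coordinates, and using that the abelianization of a semidirect product $N\rtimes H$ with $N$ abelian is $N_H\times H^{\mathrm{ab}}$, one computes for $n\ge 3$ that the $S_n$-coinvariants of $T_p$ are identified through $(c_i)\mapsto\sum c_i$ with $p\,\mathbb Z/r\mathbb Z\cong\mathbb Z/(r/p)$, hence $G(r,p,n)^{\mathrm{ab}}\cong\mathbb Z/(r/p)\times\mathbb Z/2$. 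Passing to $G=G(r,p,n)/C_q$ kills the image of the generator $[\mathrm{id};r/q,\dots,r/q]$ of $C_q$, which maps into the factor $\mathbb Z/(r/p)$ onto the subgroup generated by $\tfrac{nr}{pq}$; since $\gcd\!\big(\tfrac rp,\tfrac{nr}{pq}\big)=\tfrac{r}{pq}\gcd(n,q)$, this gives $G^{\mathrm{ab}}\cong\mathbb Z/2\times\mathbb Z/\tfrac{r\gcd(n,q)}{pq}$, and symmetrically $(G^*)^{\mathrm{ab}}\cong\mathbb Z/2\times\mathbb Z/\tfrac{r\gcd(n,p)}{pq}$. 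If the $\GCD$ condition fails then $\gcd(n,p)\neq\gcd(n,q)$ by the preliminary remark, and since $\mathbb Z/2\times\mathbb Z/a\cong\mathbb Z/2\times\mathbb Z/b$ forces $a=b$, the two abelianizations are non-isomorphic, so $G\not\cong G^*$. The case $n=1$ is trivial (all groups involved are cyclic of order $r/pq$ and the condition holds automatically), while $n=2$ is genuinely exceptional precisely because the $S_2$-coinvariants of $T_p$ can pick up an extra factor of $2$ when $p$ is even, which is why it is excluded.

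The step I expect to be the main obstacle is the coordination between Propositions \ref{dec} and \ref{suff} in the ``$\Leftarrow$'' direction --- verifying that the hypothesis of Proposition \ref{suff} really does hold for the reduced groups $G(r,\delta_2p,q,n)$ and $G(r,\delta_2q,p,n)$ after dividing out the $C_{\delta_2}$ factor --- together with pinning down exactly the image of $C_q$ inside $G(r,p,n)^{\mathrm{ab}}$ and the accompanying care needed to see why the argument breaks down for $n=2$.
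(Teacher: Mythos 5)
Your argument is correct, and it splits naturally into a half that matches the paper and a half that does not. The ``$\Leftarrow$'' direction is essentially the paper's proof: both you and the paper apply Proposition \ref{dec} to $G$ and $G^*$ and reduce to Proposition \ref{suff} for $G(r,\delta_2p,q,n)$ and $G(r,\delta_2q,p,n)$, verifying $\GCD(\frac{rn}{q},\delta_2q)=\GCD(\frac{rn}{p},\delta_2p)$; your valuation-by-valuation check (cases $v_\pi(p)=v_\pi(q)$ and $v_\pi(p)\neq v_\pi(q)$, the latter forcing $v_\pi(n)\leq\min(v_\pi(p),v_\pi(q))$ and $v_\pi(\delta_2)=v_\pi(\frac{rn}{pq})$) is the same computation the paper phrases multiplicatively. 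The ``$\Rightarrow$'' direction is genuinely different: the paper notes that for $n>2$ the center of $G$ consists precisely of the scalar elements, whose number is $\GCD(\frac{rn}{pq},\frac{r}{q})$ in $G$ and $\GCD(\frac{rn}{pq},\frac{r}{p})$ in $G^*$, and concludes at once; you instead separate the groups by their abelianizations, using $(T_p\rtimes S_n)^{\mathrm{ab}}\cong (T_p)_{S_n}\times\mathbb Z/2$, the identification $(T_p)_{S_n}\cong\mathbb Z/(r/p)$ valid for $n\geq 3$, and the image of $C_q$, to get $G^{\mathrm{ab}}\cong\mathbb Z/2\times\mathbb Z/\frac{r\gcd(n,q)}{pq}$, together with your (correct) reformulation of the $\GCD$ condition as $\gcd(n,p)=\gcd(n,q)$. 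The two routes are dual in spirit: the paper's invariant is the center, of order $\frac{r\gcd(n,p)}{pq}$ times nothing, yours is the abelianization, with cyclic part of order $\frac{r\gcd(n,q)}{pq}$, and both collapse at $n=2$ for the reason you indicate (nontrivial center of $S_2$, respectively larger $S_2$-coinvariants). The paper's count of scalars is shorter and needs no reformulation of the hypothesis; your version costs the coinvariant computation but yields as byproducts the clean equivalent criterion $\gcd(n,p)=\gcd(n,q)$ and an explicit abstract invariant distinguishing $G$ from $G^*$, which the paper does not make visible.
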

\begin{proof}
Assume that $\GCD (\frac{rn}{pq},\frac{r}{q})=\GCD(\frac{rn}{pq},\frac{r}{p})$. By Proposition \ref{dec} we have $G\cong G(r,\delta_2p,q,n)\times C_{\delta_2}$ and $G^*\cong G(r,\delta_2q,p,n)\times C_{\delta_2}$. So we only have to show that $G(r,\delta_2p,q,n)\cong G(r,\delta_2q,p,n)$. 
By Proposition \ref{suff} it is enough to verify that
$\GCD(\frac{rn}{q},\delta_2q)=\GCD(\frac{rn}{p},\delta_2p)$ or
equivalently
$\GCD(\frac{rn}{pq}p,\delta_2q)=\GCD(\frac{rn}{pq}q,\delta_2p)$. If
$\pi$ is a prime number with the same multiplicity in the prime
factorizations of $p$ and $q$ then  we clearly have that $\pi^a|\GCD(\frac{rn}{pq}p,\delta_2q)$ if and only if $\pi^a|\GCD(\frac{rn}{pq}q,\delta_2p)$. If $\pi$ appears with different multiplicities in the prime factorizations of $p$ and $q$ then, by the definition of $\delta_2$, we have $\pi^a|\delta_2$ if and only if $\pi^a|\GCD(\frac{rn}{pq},\frac{r}{p})=\GCD(\frac{rn}{pq},\frac{r}{q})$ if and only if $\pi^a|\frac{rn}{pq}$. So $\pi$ appears with the same multiplicity in the prime factorizations of $\frac{rn}{pq}$ and $\delta_2$. This implies that $\GCD(\frac{rn}{pq}p,\delta_2q)=\GCD(\frac{rn}{pq}q,\delta_2p)$ and we are done. 

Now assume that $G\cong G^*$. We recall that for $n>2$ the center of
the symmetric group $S_n$ is trivial. Then one can easily prove that the center of $G$ is given by the scalar elements only. The number of scalar elements in $G$ is exactly $\GCD(\frac{rn}{pq},\frac{r}{q})$ and the number of scalar elements in $G^*$ is $\GCD(\frac{rn}{pq},\frac{r}{p})$ and so the result follows. If $n=1$ the groups $G$ and $G^*$ are always cyclic groups of order $\frac{r}{pq}$ and so there is nothing to prove. 
\end{proof}
\begin{cor}\label{sca}
Let $G=\qc$ and $n\neq 2$. Then $G\cong G^*$ if and only if $G$ and $G^*$ have the same number of scalar elements.
\end{cor}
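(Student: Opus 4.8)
The plan is to read the corollary directly off Theorem \ref{isomo}; the only thing to supply is a closed formula for the number of scalar elements of a group $\qc$ and of its dual, after which the two conditions in the corollary become literally the two sides of the gcd equality in that theorem.

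First I would count the scalars of $G=\qc$. For $n\geq 2$ an element of $G\subseteq GL(S^q(V))$ acts as a scalar on $S^q(V)$ precisely when it is the image of a scalar matrix $\zeta_r^cI\in G(r,p,n)$: a nontrivial underlying permutation would move some monomial $v_i^q$, and equality of the resulting scalars on all degree-$q$ monomials forces the exponents $c_i$ to coincide. Two such matrices $\zeta_r^cI,\zeta_r^{c'}I$ determine the same element of $G=G(r,p,n)/C_q$ exactly when $c\equiv c'$ modulo $\frac{r}{q}$, and since $pq\mid rn$ the condition $nc\equiv 0\bmod p$ for membership in $G(r,p,n)$ is well defined modulo $\frac{r}{q}$; hence the scalars of $G$ are in bijection with $\{c\bmod \tfrac{r}{q}:\ nc\equiv 0\bmod p\}$. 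Counting the solutions of this linear congruence --- an elementary computation of the same kind used in Lemma \ref{represe} --- gives $\GCD(\tfrac{rn}{pq},\tfrac{r}{q})$, which is exactly the number already invoked in the proof of Theorem \ref{isomo}. Interchanging the roles of $p$ and $q$, the dual $G^{*}=(G(r,q,p,n),p)$ has $\GCD(\tfrac{rn}{pq},\tfrac{r}{p})$ scalar elements.

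With these formulas, the statement ``$G$ and $G^{*}$ have the same number of scalar elements'' is literally the identity $\GCD(\tfrac{rn}{pq},\tfrac{r}{q})=\GCD(\tfrac{rn}{pq},\tfrac{r}{p})$, and Theorem \ref{isomo} asserts that, for $n\neq 2$, this identity holds if and only if $G\cong G^{*}$ --- which is the corollary. For safety I would record the boundary case $n=1$ directly: there $S_1$ is trivial, every element of $G$ and of $G^{*}$ is scalar, both groups are cyclic of order $\frac{r}{pq}$, and both sides of the equivalence hold; the case $n=2$ is the one deliberately excluded, as in Theorem \ref{isomo}. I do not expect any genuine obstacle here, the corollary being essentially a reformulation of the theorem; the only mildly technical point is the scalar count $\GCD(\tfrac{rn}{pq},\tfrac{r}{q})$, and since it is needed anyway for Theorem \ref{isomo} that is where I would carry out (or simply cite) the small argument.
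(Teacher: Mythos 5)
Your proposal is correct and follows essentially the same route as the paper, which gives no separate argument for the corollary but treats it as immediate from Theorem \ref{isomo} together with the scalar counts $\GCD(\tfrac{rn}{pq},\tfrac{r}{q})$ for $G$ and $\GCD(\tfrac{rn}{pq},\tfrac{r}{p})$ for $G^*$ already invoked in that theorem's proof. Your only addition is to spell out the elementary count of scalars (and the $n=1$ boundary case), which the paper leaves implicit.
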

\begin{remark}
If $n=2$  the condition of having the same number of scalar elements is always sufficient for $G$ to be isomorphic to $G^*$ but it is no longer necessary. For example if $G=G(2,2,1,2)$ we have that $G$ has two scalar elements while $G^*$ has only one scalar element. Nevertheless we have that $G$ and $G^*$ are both isomorphic to the Klein group of order 4.
\end{remark}
As an application of Corollary \ref{sca} we can consider the Weyl group $G=G(2,2,1,n)=D_n$ and its dual group $G^*=G(2,1,2,n)=B_n/\pm 1$. We have that $G\cong G^*$ if and only if $n$ is odd. In fact $G$ has two scalars if $n$ is even and one scalar if $n$ is odd while $G^*$ has exactly one scalar for every value of $n$.


We recall to the reader that a projective reflection group is a pair
$(G,q)$ and so two projective reflection groups  should not be considered isomorphic  as long as the parameters $q$ are different. So Theorem \ref{isomo} and Corollary \ref{sca} should be rather seen as an answer to a natural question and not as a piece of information for the classification of projective reflection groups. Nevertheless, this result is important whenever we consider the representation theory of the abstract group $G$ independently of its action on the algebra of polynomials $S_q[V^*]$.
\section{Descent-type statistics}\label{stat}
The study of permutation statistics, and in particular of those depending on up-down or descents patterns, is a very classical subject of study in algebraic combinatorics that goes back to the early 20th century to the works of Macmahon \cite{M}, and has found a new interest in the last decades after the fundamental work of Adin and Roichman \cite{AR}. In this section we extend these concepts to our context and in particular we define some multivariate statistics on the projective reflection groups $\qc)$ that we need to describe some aspects of their invariant theory. 

For $g=[\sigma;c_1,\ldots,c_n]\in \qc$ we let
\begin{eqnarray*}
\HDes(g)&\eqdef&\{i\in [n-1]:\,c_i\equiv c_{i+1}\textrm{ and }\sigma_i>\sigma_{i+1}\}\\
h_i(g)&\eqdef&\#\{j\geq i:\,j\in \HDes(g)\}\\
k_i(g)&\eqdef& \left\{\begin{array}{ll}[c_n]_{r/q}& \textrm{if }i=n\\k_{i+1}+[c_i-c_{i+1}]_r& \textrm{if }i\in [n-1],
\end{array}\right.
\end{eqnarray*}
where $[c]_s$ is the smallest non negative representative of the class of the integer $c$ modulo $s$.\\
Note that these statistics do not depend on the choice of the integers $c_1,\ldots,c_n$ for representing $g$. We call the elements in  $\HDes(g)$ the \emph{homogeneous descents} of $g$.\\
For example, let $g=[27648153;2,3,3,5,1,7,3,2]\in G(6,2,3,8)$. Then $\HDes(g)=\{2,5\}$, \\$(h_1,\ldots,h_8)=(2,2,1,1,1,0,0,0)$ and $(k_1,\ldots,k_8)=(18,13,13,9,5,5,1,0)$.

If $q=p=1$ these statistics give an alternative definition for the flag-major index of Adin and Roichman (see \cite{AR}) for wreath products $G(r,n)$. In fact, if we let $d_i(g) \eqdef\#\{j\geq i:\,j\in \Des(g)\}$, where 
$$\Des(g)=\{i:\textrm{ either }[c_i]_r<[c_{i+1}]_r\textrm{ or }[c_i]_r=[c_{i+1}]_r \textrm{ and }\sigma_i>\sigma_{i+1}\},$$
then the \emph{flag-major index} is defined as
$$
\fmaj(g)\eqdef r\sum_{i\in \Des(g)}i+\sum_i[c_i]_r=\sum_i (r d_i(g)+[c_i]_r).
$$

\begin{lem}\label{oldnew}
Let $p=q=1$ and $g=[\sigma,c_1,\ldots,c_n]\in G(r,n)$. Then
$$
r\cdot h_i(g)+k_i(g)=r\cdot d_i(g)+[c_i]_r,
$$
and in particular $\sum(r\cdot h_i(g)+k_i(g))=\fmaj(g)$.
\end{lem}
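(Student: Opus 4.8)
The plan is to establish the pointwise identity $r\, h_i(g) + k_i(g) = r\, d_i(g) + [c_i]_r$ for every $i\in[n]$ by downward induction on $i$; the assertion about $\fmaj$ then follows immediately by summing over $i$ and comparing with the definition $\fmaj(g)=\sum_i(r\, d_i(g)+[c_i]_r)$.

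For the base case $i=n$, note that both $\HDes(g)$ and $\Des(g)$ are contained in $[n-1]$, so $h_n(g)=d_n(g)=0$, while $k_n(g)=[c_n]_r$ (here $q=1$); hence both sides equal $[c_n]_r$. For the inductive step, write $h_i(g)=h_{i+1}(g)+\varepsilon_H$, $d_i(g)=d_{i+1}(g)+\varepsilon_D$, and $k_i(g)=k_{i+1}(g)+[c_i-c_{i+1}]_r$, where $\varepsilon_H,\varepsilon_D\in\{0,1\}$ record whether $i$ lies in $\HDes(g)$, resp.\ $\Des(g)$. Subtracting the inductive hypothesis at $i+1$, the desired identity at $i$ becomes
$$
r(\varepsilon_H-\varepsilon_D)=[c_i]_r-[c_{i+1}]_r-[c_i-c_{i+1}]_r .
$$
The right-hand side is controlled by the elementary fact that for any integers $a,b$ one has $[a]_r+[b]_r=[a+b]_r$ when $[a]_r+[b]_r<r$ and $[a]_r+[b]_r=[a+b]_r+r$ otherwise; taking $a=c_i-c_{i+1}$ and $b=c_{i+1}$ shows the right-hand side equals $0$ when $[c_i]_r\geq[c_{i+1}]_r$ and $-r$ when $[c_i]_r<[c_{i+1}]_r$.

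It then remains to check, directly from the definitions of $\HDes$ and $\Des$, that $\varepsilon_H-\varepsilon_D$ takes the matching value in each of three cases: if $[c_i]_r<[c_{i+1}]_r$ then $i\notin\HDes(g)$ but $i\in\Des(g)$, so $\varepsilon_H-\varepsilon_D=-1$; if $[c_i]_r>[c_{i+1}]_r$ then $i$ belongs to neither set; and if $[c_i]_r=[c_{i+1}]_r$ then membership of $i$ in either set is governed by the single condition $\sigma_i>\sigma_{i+1}$, so $\varepsilon_H=\varepsilon_D$. This closes the induction. The only point demanding any care is lining up the ``carry'' in the modular addition $[c_i-c_{i+1}]_r+[c_{i+1}]_r$ with the gap between ordinary and homogeneous descents; everything else is bookkeeping. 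Summing $r\,h_i(g)+k_i(g)=r\,d_i(g)+[c_i]_r$ over $i=1,\dots,n$ then yields $\sum_i(r\,h_i(g)+k_i(g))=\fmaj(g)$.
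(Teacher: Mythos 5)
Your proof is correct and follows essentially the same route as the paper: downward induction on $i$ with base case $i=n$, and a case analysis according to whether $[c_i]_r$ is less than, greater than, or equal to $[c_{i+1}]_r$; your $\varepsilon_H,\varepsilon_D$ bookkeeping and the modular-addition carry observation are just a repackaging of the paper's direct computation of $d_i,h_i,k_i$ in each case.
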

\begin{proof}
We proceed by reverse induction on $i$. If $i=n$ it is clear, since $d_n=h_n=0$ and $k_n=[c_n]_r$ by definition. So let $i<n$. 
\begin{itemize}
\item If $[c_i]_r=[c_{i+1}]_r$ then $i\in \Des(g)$ if and only if $i\in \HDes(g)$ and the result easily follows.
\item If $[c_i]_r<[c_{i+1}]_r$ we have $d_i=d_{i+1}+1$, $h_i=h_{i+1}$ and $k_i=k_{i+1}+r+[c_i]_r-[c_{i+1}]_r$. So
$$
rh_i+k_i=rh_{i+1}+k_{i+1}+r+[c_i]_r-[c_{i+1}]_r=rd_{i+1}+[c_{i+1}]_r+r+[c_i]_r-[c_{i+1}]_r=rd_i+[c_i]_r.
$$
\item If $[c_i]_r>[c_{i+1}]_r$ we have $d_i=d_{i+1}$, $h_i=h_{i+1}$ and $k_i=k_{i+1}+[c_i]_r-[c_{i+1}]_r$ and the result follows similarly.
\end{itemize}
\end{proof}
We note that if $\lambda_i(g)\eqdef r\cdot h_i(g)+k_i(g)$ then the sequence $\lambda(g)\eqdef(\lambda_1(g),\ldots,\lambda_n(g))$ is a partition (actually both sequences $(h_1,\ldots,h_n)$ and $k_1,\ldots,k_n$ are partitions). We may also observe that $\lambda(g)$ is such that $g=[|g|;\lambda(g)]$. 
Extending the notion of flag-major index we define the flag-major index for the projective reflection group $G(r,p,q,n)$ by $\fmaj(g)\eqdef |\lambda(g)|$. 

As a first application of these definitions we can describe an explicit basis of the coinvariant algebra $R^G$ of a projective reflection group of the form $G=G(r,p,q,n)$. We already know that $\dim R^G=|G|$ and so it could be natural to expect a basis for the algebra $R^G$ indexed by elements of $G$. As it was mentioned in the introduction, this is the first occurrence of the invariant theory of a projective reflection group $G$ which is naturally described by its dual group $G^*$. 
Generalizing and unifying results and definitions in \cite{GS,ABR,BC,BB}, we associate to any element $g\in G$ a monomial $a_g\in \mc[X]\eqdef\mc[x_1,\ldots,x_n]$ of degree $\fmaj(g)$ in the following way 
$$a_g(X)\eqdef \prod_i x_{|g|(i)}^{\lambda_i(g)}.$$ 
We denote by $S_q[X]$ the algebra of polynomials in $\mc[X]$ generated by the monomials of degree $q$. 
\begin{lem}\label{deg}
Let $g\in G(r,1,q,n)$. Then the following are equivalent
\begin{itemize}
\item $g\in \qc$;
\item $\fmaj(g)\equiv 0 \mod p$;
\item $a_g\in S_p[X]$.
\end{itemize}

\end{lem}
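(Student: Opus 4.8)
The plan is to prove the chain of equivalences by relating the quantity $\fmaj(g)=|\lambda(g)|$ to the arithmetic condition defining the subgroup $\qc\subseteq G(r,1,q,n)$, and then to the membership condition $a_g\in S_p[X]$. Recall from the discussion preceding the lemma that $g=[|g|;\lambda(g)]$, so the coordinates $\lambda_i(g)=r\cdot h_i(g)+k_i(g)$ form a particular choice of exponents representing $g$. First I would observe that, since $g\in G(r,1,q,n)$ and membership of an element $[\sigma;c_1,\ldots,c_n]$ in $G(r,p,n)$ (before the quotient by $C_q$) is governed by the congruence $\sum c_i\equiv 0\bmod p$, the element $g$ lifts to an element of $G(r,p,n)$, hence lies in $G(r,p,q,n)=G(r,p,n)/C_q$, exactly when some (equivalently every, up to the ambiguity) representative has colour-sum divisible by $p$. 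The key point here is that $\fmaj(g)=\sum_i\lambda_i(g)=c(g)$ with $\lambda(g)$ a genuine representative, so $g\in\qc$ if and only if $\fmaj(g)\equiv 0\bmod p$. One has to be a little careful about what "$c(g)\bmod p$ is well-defined" means: as noted in the text, $c(g)$ is only defined modulo $\GCD(r,\frac{rn}{q})$, which is a multiple of $p$, so reduction mod $p$ is unambiguous and this first equivalence is clean.

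Next I would handle the equivalence of the second and third bullets. By definition $a_g(X)=\prod_i x_{|g|(i)}^{\lambda_i(g)}$ is a monomial of total degree $\sum_i\lambda_i(g)=\fmaj(g)$. The algebra $S_p[X]$ is, by definition, generated by the monomials of degree $p$; since it is spanned by products of degree-$p$ monomials, it is precisely the span of all monomials whose total degree is divisible by $p$ (monomials of degree a multiple of $p$ can be written as products of degree-$p$ monomials — this is essentially the $n\geq 1$ case of the elementary combinatorial fact that any monomial of degree $kp$ factors into $k$ monomials of degree $p$, which is immediate since one simply partitions the multiset of $kp$ variable-slots into $k$ blocks of size $p$). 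Conversely a monomial lying in $S_p[X]$ must have degree divisible by $p$ because every generator does and the grading is preserved under multiplication. Hence $a_g\in S_p[X]\iff \deg a_g\equiv 0\bmod p\iff \fmaj(g)\equiv 0\bmod p$, giving the second-to-third equivalence. Combining this with the first part closes the cycle.

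The only genuine subtlety — and what I expect to be the main obstacle — is the bookkeeping in the first equivalence: one must check that the specific exponent vector $\lambda(g)$ really does represent the same group element $g\in G(r,1,q,n)$ with which we started (so that its colour-sum is a legitimate value of $c(g)$), and that this is compatible with passing to the quotient by $C_q$. This rests on the already-established facts that the statistics $h_i,k_i$ do not depend on the choice of representative $c_1,\ldots,c_n$, and on the identity $g=[|g|;\lambda(g)]$ recorded just before the lemma; once these are invoked, the congruence $\fmaj(g)=c(g)\equiv 0\bmod p$ is just the defining condition for a lift of $g$ to sit inside $G(r,p,n)$. I would also remark that since $p\mid r$, the ambiguity of each $c_i$ modulo $r$ does not affect the residue of $\sum c_i$ modulo $p$, so there is no obstruction coming from the multivaluedness of the colours. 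Everything else is routine.
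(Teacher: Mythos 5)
Your proof is correct and follows essentially the same route as the paper: both arguments come down to the congruence $\fmaj(g)\equiv c(g)\bmod p$ (you via the recorded identity $g=[|g|;\lambda(g)]$ and the well-definedness of $c(g)$ modulo $\GCD(r,\tfrac{rn}{q})$, the paper via a normalized representative with $[c_n]_{r/q}=[c_n]_r$ and Lemma~\ref{oldnew}), after which membership in $\qc$ and membership of $a_g$ in $S_p[X]$ are read off from divisibility of the colour sum, respectively of $\deg a_g$, by $p$. Your explicit justification that $S_p[X]$ is exactly the span of monomials of degree divisible by $p$ is a small additional detail the paper leaves implicit.
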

\begin{proof}
We can choose an expression $g=[\sigma;c_1,\ldots,c_n]$ for $g$ such that $[c_n]_{r/q}=[c_n]_r$. So we can assume that $q=1$. Then, by Lemma \ref{oldnew}, $c_i\equiv k_i \mod r$ and so 
$$
\fmaj(g)=\deg a_g=\sum_i r h_i+k_i\equiv \sum_i k_i\equiv \sum c_i \mod r,
$$
and the result follows.
\end{proof}
\begin{thm} \label{coba}
If $G=G(r,p,q,n)$ then the set $\{a_g:\,g\in G^*\}$ represents a basis for $R^{G}$.
\end{thm}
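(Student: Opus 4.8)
The plan is to reduce the statement to the already-known wreath-product case $G=G(r,1,1,n)$ (or at least to the case $q=1$), and then to propagate the result through the chain of quotients and the duality $G\mapsto G^*$. Since by Proposition~\ref{reg} we know $\dim R^G=|G|=|G^*|$, it suffices to prove that the monomials $\{a_g:g\in G^*\}$ are linearly independent in $R^G$, or equivalently that they span. I would first treat the case $p=q=1$, i.e. $G=G(r,n)$: here $G^*=G=G(r,n)$ and by Lemma~\ref{oldnew} the monomial $a_g=\prod_i x_{|g|(i)}^{\lambda_i(g)}$ has exponent vector obtained by sorting $(\,rd_i(g)+[c_i]_r\,)$; one checks that as $g$ ranges over $G(r,n)$ the exponent data $(|g|,\lambda(g))$ ranges over all pairs (permutation, partition with $\lambda_i<r(n-i+1)$ and appropriate congruences), which is exactly the classical Garsia--Stanton / Adin--Bagno--Roichman descent basis of the coinvariant algebra of $G(r,n)$. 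This is the content of \cite{ABR,BB} and can be invoked.

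Next I would pass from $G(r,n)$ to $G(r,p,n)=G(r,p,1,n)$. Here $G^*=G(r,1,p,n)=G(r,p,n)/C_p$-type object; more precisely by Lemma~\ref{deg} the set $\{g\in G(r,1,p,n)\}$ is identified with those $g\in G(r,1,1,n)$ whose $a_g$ lies in $S_p[X]$, and the coinvariant algebra $R^{G(r,p,n)}$ is, by the argument in the proof of Proposition~\ref{reg}, the subalgebra of $R^{G(r,n)}$ spanned by the homogeneous pieces whose degree is divisible by $p$ — wait, more carefully: $R^{G(r,p,n)}=S[X]/I_+^{G(r,p,n)}$, and since the invariants of $G(r,p,n)$ differ from those of $G(r,n)$ only by the single generator $e_n^{r/p}$ replacing $e_n^r$, the descent basis elements $a_g$ for $g\in G(r,n)$ still descend to a basis of $R^{G(r,p,n)}$ after we restrict the allowed exponent of the ``last'' variable — and a bookkeeping check via Lemma~\ref{deg} shows the surviving $g$ are exactly those in $G^*=G(r,1,p,n)$. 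So the case $q=1$ follows.

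Finally I would handle general $q$. By the proof of Proposition~\ref{reg}, $R^{G(r,p,q,n)}$ is the subalgebra of $R^{G(r,p,n)}$ consisting of the homogeneous components of degree divisible by $q$ (equivalently, it is $S_q[X]/(I_+^{G(r,p,n)}\cap S_q[X])$, matching the definition $R^G=S_q[V^*]/I_+^G$). An element $a_g$ with $g\in G(r,p,n)$ lies in $S_q[X]$ iff $\fmaj(g)=\deg a_g\equiv 0\bmod q$, which by the same congruence computation as in Lemma~\ref{deg} (applied with the roles of $p$ and $q$ swapped) holds iff $g$, viewed modulo $C_q$, admits a representative in $G(r,p,q,n)$ — that is, iff the corresponding class lies in $G^* = G(r,q,p,n)$ after interchanging $p$ and $q$. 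Intersecting the two congruence conditions ($\fmaj\equiv0\bmod p$ from the passage $G(r,n)\to G(r,p,n)$, and $\fmaj\equiv 0\bmod q$ from the passage to degree-$q$ pieces) picks out exactly the monomials indexed by $G^*=G(r,q,p,n)$, and a cardinality count using Proposition~\ref{reg} upgrades spanning to a basis.

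The main obstacle I anticipate is the bookkeeping in the middle step: one must verify that the Garsia--Stanton-type descent monomials for $G(r,n)$ remain linearly independent (not merely spanning) modulo the \emph{larger} ideal $I_+^{G(r,p,n)}$, and that the count of surviving monomials matches $|G(r,p,n)|$ exactly. I would do this either by a straightening/leading-term argument with respect to a suitable monomial order (showing the $a_g$ have distinct leading terms that are a transversal for the initial ideal of $I_+^{G(r,p,n)}$), or by comparing Hilbert series: $\mathrm{Hilb}(R^{G(r,p,n)};t)=\prod_{i=1}^{n-1}\frac{1-t^{ri}}{1-t}\cdot\frac{1-t^{rn/p}}{1-t}$, and checking that $\sum_{g\in G^*}t^{\fmaj(g)}$ equals this, which follows from the fact that $\fmaj$ on $G(r,p,q,n)$ is (by construction of $\lambda(g)$ via the $h_i$ and $k_i$) the obvious ``flag'' refinement, together with the standard $q$-analogue identities. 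Everything else is routine once the $q=1$, $p$ arbitrary case is nailed down.
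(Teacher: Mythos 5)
Your overall reduction strategy (settle the $q=1$ descent basis, then select the monomials of degree divisible by $q$ and finish with the dimension count $\dim R^G=|G|=|G^*|$ from Proposition~\ref{reg}) is the same as the paper's, which simply cites \cite{BB} for the whole $q=1$ case rather than rederiving it from $G(r,n)$. The genuine problem is in your bookkeeping of the index sets, and it sits exactly at the point the theorem is about. By Lemma~\ref{deg}, the congruence $\fmaj(h)\equiv 0\bmod p$ on $h\in G(r,n)$ selects the \emph{subgroup} $G(r,p,n)$, not the index set $G^*=G(r,1,p,n)$ of the descent basis of $R^{G(r,p,n)}$: the parameter $p$ of $G$ enters through the \emph{quotient} by $C_p$, i.e.\ through the normalization $k_n=[c_n]_{r/p}$ (your phrase ``restrict the allowed exponent of the last variable'' is the correct version of this), and only the parameter $q$ enters through a congruence on $\fmaj$. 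Consequently your closing claim that ``intersecting the two congruence conditions $\fmaj\equiv 0\bmod p$ and $\fmaj\equiv 0\bmod q$ picks out exactly $G^*$'' fails: that set has cardinality $r^nn!/\mathrm{lcm}(p,q)$, which differs from $|G|=r^nn!/(pq)$ whenever $\gcd(p,q)>1$, and already for $q=1$ the monomials indexed by $G(r,p,n)$ itself are not a basis --- for $G(2,2,1,2)$ they are $\{1,\,x_1x_2,\,x_2^2,\,x_1x_2^3\}$, while $\Hilb(R^{G(2,2,1,2)})(t)=1+2t+t^2$ and $x_1x_2\in I_+^{G}$. The correct selection is: take the \cite{BB} basis indexed by classes modulo $C_p$ (equivalently, lifts with last exponent $<r/p$), and then impose $\fmaj\equiv 0\bmod q$, i.e.\ color sum $\equiv 0\bmod q$, which picks out exactly $G(r,q,n)/C_p=G(r,q,p,n)=G^*$; this is what the paper does.

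The second gap is the one you flag yourself: independence of the surviving monomials modulo the larger ideal $I_+^{G(r,p,n)}$ in your intermediate step. This burden is self-imposed (the paper cites \cite{BB} for $q=1$), but if you want to carry it out, your Hilbert-series route can be completed with two points you omit: spanning follows because the $p$ lifts $h$ of a class $\bar g\in G(r,1,p,n)$ satisfy $a_h=a_{\bar g}\,(x_1\cdots x_n)^{\ell r/p}$, and $(x_1\cdots x_n)^{r/p}$ is a $G(r,p,n)$-invariant of positive degree, so the discarded monomials die in $R^{G(r,p,n)}$; and the identity $\sum_{\bar g\in G(r,1,p,n)}t^{\fmaj(\bar g)}=\prod_{i=1}^{n-1}\frac{1-t^{ri}}{1-t}\cdot\frac{1-t^{rn/p}}{1-t}$ follows from the Adin--Roichman formula for $G(r,n)$ together with the same lift computation ($\fmaj(h)=\fmaj(\bar g)+\ell rn/p$). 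With the indexing corrected and these two points supplied, your argument closes and agrees in substance with the paper's: the $a_g$, $g\in G^*$, are part of an already-established basis, lie in $R^G$ because their degree is $\equiv 0\bmod q$, and have cardinality $|G^*|=|G|=\dim R^G$.
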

\begin{proof}
This fact is implicitly proved in \cite{BB} if $q=1$. Now assume that $p=1$. In this case the coinvariant algebra $R^{G(r,1,q,n)}$ has a basis given by
$$
\{a_g:g\in G(r,1,1,n) \textrm{ and }\deg(a_g)\equiv 0 \mod q\}.
$$ By Lemma \ref{deg} this set is exactly $\{a_g:g\in
G(r,q,1,n)\}$. \\
Now consider the general case, i.e. $p,q$
arbitrary, and recall that $R^G$ is the subalgebra of $R^{G(r,p,1,n)}$
given by the elements of degree multiple of $q$.
If $g\in G^*$ then it is clear that there exists an element $g'$ which represents $g$ in $G(r,q,1,n)$ such that $\lambda_i(g)=\lambda_i(g')$ for all $i$ and so $a_g=a_{g'}$. So the elements $a_g$ with $g\in G^*$ are independent (since they belong to a basis of $R^{G(r,1,q,n)}$), and belong to $R^{G}$ by Lemma \ref{deg}. Since they have the right cardinality they form a basis for $R^{\qc}$.
\end{proof}
The following result, which is stronger than Theorem \ref{coba} is one of the main ingredients in the description of the descent representations (see \S \ref{desrep}). For this we need to introduce some notation on partitions. If $M$ is a monomial in $\mathbb C[X]$ we denote by $\lambda(M)$ its \emph{exponent partition}, i.e. the partition obtained by rearranging the exponents of $M$. We say that a polynomial is homogeneous of \emph{partition degree} $\lambda$ if it is a linear combination of monomials whose exponent partition is $\lambda$. We consider the set of partitions of a given integer $k$ as a partially ordered set by the dominance order. In this order we have $\lambda \unlhd \mu$ if $\lambda_1+\ldots+\lambda_i\leq \mu_1+\cdots+\mu_i$ for all $i$. \\
We denote by $S_q[X]_\lambda$ the space of homogeneous polynomials of partition degree $\lambda$ and we let 
$$S_q[X]_{\lhd \lambda}=\bigoplus_{\lambda'\lhd \lambda}S_q[X]_{\lambda'}\textrm{ and }  S_q[X]_{\unlhd \lambda}=\bigoplus_{\lambda'\unlhd \lambda}S_q[X]_{\lambda'}.$$
\begin{cor}\label{klo}
Let $G=\qc$ and  $M\in S_q[X]$ be a monomial and consider the unique expression
$$
M=\sum_{g\in G^*} f_ga_g
$$
with $f_g\in S_q[X]^{G}$. Then $f_g\in S_q[X]_{\unlhd \lambda}$ for some partition $\lambda$ such that $\lambda+\lambda(g)\unlhd
\lambda(M)$. In particular, if $\fmaj(g)=\deg M$ and $f_g\neq 0$ we
have $\lambda(g)\unlhd \lambda(M)$.
\end{cor}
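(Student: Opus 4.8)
The plan is a straightening induction with respect to the dominance order on exponent partitions, in the spirit of \cite{AR,ABR,BB}. Since $C_q$ acts trivially on $S_q[X]$ we have $S_q[X]^G=\mathbb C[X]^{G(r,p,n)}=\mathbb C[\theta_1,\dots,\theta_n]$, with $\theta_i=e_i(x_1^r,\dots,x_n^r)$ for $i<n$ and $\theta_n=(x_1\cdots x_n)^{r/p}$; being the invariant ring of a reflection group, $\mathbb C[X]$ is free over it, and restricting to homogeneous components of degree divisible by $q$ shows that $S_q[X]$ is a free $S_q[X]^G$-module with basis $\{a_g:g\in G^*\}$ by Theorem \ref{coba}. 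Hence $M=\sum_g f_ga_g$ is the unique such expression. I would fix a total order $\preceq$ on the monomials of $\mathbb C[X]$ refining the dominance order on exponent partitions, breaking ties lexicographically on exponent vectors, and prove by $\preceq$-induction on $M$ the slightly stronger statement that every monomial $P$ occurring in any $f_g$ satisfies $\lambda(P)+\lambda(g)\unlhd\lambda(M)$. That $f_g$ then lies in a single $S_q[X]_{\unlhd\lambda}$ with $\lambda+\lambda(g)\unlhd\lambda(M)$ follows from the restricted shape of the exponent partitions that can appear in $S_q[X]^G$ (multiples of $r$, shifted by a multiple of $r/p$), and the ``in particular'' is then immediate, since $\fmaj(g)=\deg M$ forces $f_g$ to be a scalar.

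For the inductive step write $M=\prod_i x_{\sigma(i)}^{\mu_i}$ with $\mu=\lambda(M)$ and $\sigma\in S_n$ chosen so that $\mu_i=\mu_{i+1}$ forces $\sigma(i)<\sigma(i+1)$. The crucial combinatorial construction is to exhibit an element $g_0=[\sigma;c_1,\dots,c_n]\in G^*$ with $\lambda(g_0)\le\mu$ componentwise and such that $\nu:=\mu-\lambda(g_0)$ is a partition all of whose parts are congruent modulo $r$, the common residue being a multiple of $r/p$. One forces $c_i-c_{i+1}\equiv\mu_i-\mu_{i+1}\pmod r$, so that $[c_i-c_{i+1}]_r=[\mu_i-\mu_{i+1}]_r$ and each difference $\nu_i-\nu_{i+1}=(\mu_i-\mu_{i+1})-[\mu_i-\mu_{i+1}]_r-r\,[i\in\HDes(g_0)]$ is a nonnegative multiple of $r$ (nonnegativity when $i\in\HDes(g_0)$ uses that then $\sigma(i)>\sigma(i+1)$, whence $\mu_i>\mu_{i+1}$ and $r\mid\mu_i-\mu_{i+1}$, so $\mu_i-\mu_{i+1}\ge r$); one then picks $c_n\bmod r$ so that $[c_n]_{r/q}\le\mu_n$, $\nu_n\equiv0\pmod{r/p}$ and $\fmaj(g_0)\equiv0\pmod q$, this last condition being exactly membership in $G^*$ (compare Lemma \ref{deg}). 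For $q=1$ this is the construction underlying \cite{BB}, and the general case reduces to it via the passage to $G(r,p,1,n)$ used in the proof of Theorem \ref{coba}. Granting $g_0$, one has $a_{g_0}=\prod_i x_{\sigma(i)}^{\lambda_i(g_0)}$, hence $M=N\,a_{g_0}$ where $N:=\prod_i x_{\sigma(i)}^{\nu_i}$.

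Now write $\nu_i=(r/p)s+r\,u_i$ with $u=(u_1\ge\cdots\ge u_{n-1}\ge u_n=0)$ a partition, and set $\theta:=\theta_n^{\,s}\prod_{k}\theta_{u'_k}\in S_q[X]^G$, where $u'$ is the conjugate of $u$ (so all $u'_k<n$). The divisibility of $\nu$ makes $N$ occur in $\theta$ with coefficient $1$, while $\theta-m_\nu$ involves only monomials of exponent partition $\lhd\nu$; hence, writing $\mathbf a_N$ for the exponent vector of $N$,
$$
M=N\,a_{g_0}=\theta\,a_{g_0}-\sum_{\mathbf a\neq\mathbf a_N}x^{\mathbf a}a_{g_0}-\sum_{\lambda(M')\lhd\nu}c_{M'}\,M'a_{g_0},
$$
the first sum being over the rearrangements $\mathbf a\neq\mathbf a_N$ of $\nu$ and the second over the remaining monomials of $\theta$. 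Each term $x^{\mathbf a}a_{g_0}$ or $M'a_{g_0}$ is $\prec M$: from $\lambda(x^{\mathbf a}a_{g_0})\unlhd\lambda(x^{\mathbf a})+\lambda(g_0)=\mu$ (resp.\ $\lambda(M'a_{g_0})\unlhd\lambda(M')+\lambda(g_0)\lhd\mu$) the exponent partition is $\unlhd\mu$, strictly for the $M'a_{g_0}$; and when $\lambda(x^{\mathbf a}a_{g_0})=\mu$ one checks $x^{\mathbf a}a_{g_0}<_{\mathrm{lex}}M$ using that $\sigma$ is increasing on the blocks of $\lambda(g_0)$ (which follows from the construction of $g_0$). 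So the induction hypothesis applies to each such term, and summing: the summand $\theta a_{g_0}$ contributes the invariant $\theta$ to the coefficient of $a_{g_0}$, whose monomials have exponent partition $\unlhd\nu$ with $\nu+\lambda(g_0)=\mu$, while every monomial $P$ arising from the other terms satisfies $\lambda(P)+\lambda(h)\unlhd\mu$ by the induction hypothesis applied to those terms (each of exponent partition $\unlhd\mu$). This gives the asserted bound for all coefficients.

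The step I expect to be the main obstacle is the construction of $g_0\in G^*$ in the inductive step: one must simultaneously satisfy the congruences making $\nu$ an $r$-multiple with residue divisible by $r/p$ and the congruence $\fmaj(g_0)\equiv0\pmod q$ that places $g_0$ in $G^*$, and verify that these constraints on $c_n$ can always be met within the range $0\le[c_n]_{r/q}\le\mu_n$ — this is precisely where the divisibility hypotheses $p\mid r$, $q\mid r$, $pq\mid rn$ and the arithmetic behind Lemma \ref{deg} enter. The remaining ingredients — the stated leading behaviour of the product $\theta$ of generators, and the compatibility of the lexicographic tie-breaking with multiplication by a sorted exponent vector — are the routine part of the straightening machinery, already present in \cite{AR,ABR,BB}.
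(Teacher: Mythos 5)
Your proof is a from-scratch straightening induction, and in outline it is viable, but it takes a genuinely different (and much heavier) route than the paper. The paper's proof is a two-line reduction: the statement is known for $q=1$, i.e.\ for $\mathbb C[X]$ over $S[X]^{G(r,p,1,n)}$ with basis $\{a_g\}_{g\in G(r,1,p,n)}$ (\cite{ABR,BC,BB}), and since $G^*=G(r,q,p,n)\subseteq G(r,1,p,n)$ and $S_q[X]^{G}\subseteq S[X]^{G(r,p,1,n)}$, the expression of $M\in S_q[X]$ provided by Theorem \ref{coba} is also a legitimate expression in that larger setting; by uniqueness the two expressions coincide, so the dominance bounds transfer verbatim. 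You instead re-derive the straightening inside $S_q[X]$ with basis indexed by $G^*$: this buys a self-contained argument (and correctly checks that the recursion never leaves $S_q[X]$, since $a_{g_0}$ and every monomial of your invariant $\theta$ have degree divisible by $q$), but it essentially redoes the work of the cited references, and at the key constructive point you anyway defer to \cite{BB} and to the passage to $G(r,p,1,n)$, which is exactly the paper's mechanism.

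Two caveats on your details. The step you flag as the main obstacle, the construction of $g_0$, is actually unproblematic: take $c_i\equiv\mu_i\pmod r$ for all $i$; then $\sum c_i\equiv\deg M\equiv 0\pmod q$ automatically (because $q\mid r$ and $M\in S_q[X]$), so $g_0\in G^*$ with no extra constraint on $c_n$ to satisfy — and note that for $g_0\in G^*=G(r,q,p,n)$ the last statistic is $[c_n]_{r/p}$, not $[c_n]_{r/q}$ as you wrote; the relevant condition is the one you also state, $\nu_n\equiv 0\pmod{r/p}$, giving $\nu\in\Par(r,p,n)$. Conversely, the step you dismiss as routine is where the real care lies: you must argue that a rearranged term $x^{\mathbf a}a_{g_0}$ with $\lambda(x^{\mathbf a}a_{g_0})=\lambda(M)$ can only come from permuting $\nu$-values within blocks of equal parts of $\lambda(g_0)$ (on which $\sigma$ is increasing, since equal parts force $c_i\equiv c_{i+1}$ and $i\notin\HDes(g_0)$), whence lex-smaller than $M$; and you must still extract from your monomial-wise bound a single partition $\lambda$ with $f_g\in S_q[X]_{\unlhd\lambda}$ and $\lambda+\lambda(g)\unlhd\lambda(M)$, e.g.\ by keeping track that the recursion writes each $f_g$ as a combination of products of the invariant generators. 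None of this is fatal — it is the standard delicate part of the straightening machinery — but as written these points are assertions, and they are precisely what the paper's uniqueness reduction avoids having to verify.
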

\begin{proof}
This fact is known (see \cite{ABR,BC,BB}) if $q=1$. So in particular we know that there exist unique homogeneous polynomials $f_g\in S[X]^{G(r,p,1,n)}$, with $g\in G(r,1,p,n)$, such that
$$M=\sum_{g\in G(r,1,p,n)}f_ga_g,$$ and that these polynomials satisfy $f_g\in S[X]_{\unlhd \lambda}$ for some partition $\lambda$ such that $\lambda+\lambda(g)\unlhd \lambda(M)$. 
By Corollary \ref{coba} we know that there exists a unique expression
$$
M=\sum_{g\in G^*}f_g a_g,
$$
with $f_g\in S_q[X]^G$. Since the index set in the second sum is a subset of the index set in the first sum and $S_q[X]^G\subseteq S[X]^{G(r,p,1,n)}$ these two expressions coincide and the result follows.
\end{proof}
\section{Irreducible representations}\label{irrep}
In this section we describe explicitly a natural parametrization of the irreducible representations of a projective reflection group $\qc$. 
Given a partition $\mu$ of $n$, the \emph{Ferrers diagram of shape $\mu$} is a collection of boxes, arranged in left-justified rows, with $\mu_i$ boxes in row $i$. 
We denote by $\Fer(r,p,n)$ the set of $r$-tuples of Ferrers diagrams whose shapes $(\lambda^{(0)},\ldots,\lambda^{(r-1)})$ are such that $\sum |\lambda^{(i)}|=n$ and $\sum_i i|\lambda^{(i)}|\equiv 0 \mod p$. This may recall the definition of $G(r,p,n)$ where the role of $\sum_i c_i(g)$ is played by $\sum_i i|\lambda^{(i)}|$. In an extreme parallelism with the groups $G(r,p,n)$ we have the following result. 
\begin{lem}\label{cyc}
Let $(\lambda^{(0)},\ldots,\lambda^{(r-1)})\in \Fer(r,p,n)$ (and $q\in \mathbb N$ be such that  $q|r$ and $pq|rn$). Then
$$
(\lambda^{(r/q)},\lambda^{(r/q+1)}\ldots,\lambda^{(r/q+r-1)})\in \Fer(r,p,n),
$$ 
where $\lambda^{(j)}\eqdef \lambda^{(j-r)}$ if $j\geq r$.
\end{lem}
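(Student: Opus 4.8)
The plan is entirely computational: I would simply verify the two defining conditions of $\Fer(r,p,n)$ for the cyclically shifted tuple. Write $\mu^{(i)}\eqdef\lambda^{(r/q+i)}$ for $i=0,\ldots,r-1$, with the convention $\lambda^{(j)}=\lambda^{(j-r)}$ for $j\geq r$. The first condition, $\sum_i|\mu^{(i)}|=n$, is immediate since the multiset $\{\mu^{(0)},\ldots,\mu^{(r-1)}\}$ equals $\{\lambda^{(0)},\ldots,\lambda^{(r-1)}\}$.

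The substance is the congruence $\sum_i i\,|\mu^{(i)}|\equiv 0\bmod p$. First I would reindex the sum by setting $j=r/q+i\bmod r$: for $j$ running from $r/q$ to $r-1$ one has $i=j-r/q$, while for $j$ running from $0$ to $r/q-1$ one has $i=j-r/q+r$. This gives
\begin{equation*}
\sum_{i=0}^{r-1} i\,|\mu^{(i)}|
=\sum_{j=0}^{r-1}\left(j-\tfrac{r}{q}\right)|\lambda^{(j)}|
+r\sum_{j=0}^{r/q-1}|\lambda^{(j)}|
=\sum_{j=0}^{r-1} j\,|\lambda^{(j)}|-\frac{rn}{q}+r\sum_{j=0}^{r/q-1}|\lambda^{(j)}|,
\end{equation*}
where in the last step I used $\sum_j|\lambda^{(j)}|=n$.

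Now I would read off the three summands modulo $p$. The first term is $\equiv 0\bmod p$ precisely because $(\lambda^{(0)},\ldots,\lambda^{(r-1)})\in\Fer(r,p,n)$; the second term $rn/q$ is $\equiv 0\bmod p$ exactly by the hypothesis $pq\mid rn$; and the third term is $\equiv 0\bmod p$ because $p\mid r$ (which is part of the standing assumptions on the parameters). Hence the whole sum is $\equiv 0\bmod p$, and the shifted tuple lies in $\Fer(r,p,n)$.

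There is no real obstacle here — the only point requiring care is the bookkeeping of the index shift and making sure the divisibility hypotheses $q\mid r$, $p\mid r$ and $pq\mid rn$ are each invoked where needed; the parallel with the cyclic action on the parameters $c_i(g)$ of $G(r,p,n)$, pointed out just before the statement, is exactly what makes the three terms collapse.
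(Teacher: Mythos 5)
Your proof is correct and is essentially the paper's own argument: the paper reindexes the shifted sum to get $\sum_i (i-r/q)|\lambda^{(i)}| = \sum_i i|\lambda^{(i)}| - rn/q$ modulo $p$ (absorbing the wrap-around term, a multiple of $r$, via $p\mid r$), and concludes using $pq\mid rn$. You merely make the wrap-around term $r\sum_{j<r/q}|\lambda^{(j)}|$ explicit, which the paper hides in its congruence sign.
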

\begin{proof}
We have
$$
\sum_{i=0}^{r-1} i|\lambda^{(r/q+i)}|\equiv \sum_{i=0}^{r-1}(i-r/q)|\lambda^{(i)}|=\sum_{i=0}^{r-1}i|\lambda^{(i)}|-rn/q,
$$
and the result follows.
\end{proof}
If $\mu\in \Fer(r,p,n)$ we denote by $\St_{\mu}$ the set of all possible fillings of the boxes in $\mu$ with all the numbers from 1 to $n$ appearing once, in such way that rows are increasing from left to right and columns are incresing from top to bottom in every single Ferrers diagram of $\mu$. Moreover we let $\St(r,p,n)\eqdef \cup_{\mu\in \Fer(r,p,n)} \St_\mu$.\\
By Lemma \ref{cyc} we have a natural action of $C_q$ on both $\Fer(r,p,n)$ and  $\St(r,p,n)$. We denote the corresponding quotient sets by $\Fer(r,p,q,n)$ and $\St(r,p,q,n)$. 
If $T\in \St(r,p,q,n)$ we denote by $\mu(T)$ its corresponding shape in $\Fer(r,p,q,n)$ and if $\mu \in \Fer(r,p,q,n)$ we let $\St_\mu \eqdef \{T\in \St(r,p,q,n): \mu(T)=\mu\}$. Finally, if $\Fer=\Fer(r,p,q,n)$, we let $\Fer^*\eqdef \Fer(r,q,p,n)$.\\
We can define the statistics $h_i$ and $k_i$ in $\St(r,p,q,n)$
similarly to the case of $\qc$. Let $T\in \St(r,p,q,n)$ be represented
by $(T_1,\ldots,T_r)$. For $i\in [n]$ we let $c_i=j$ if $i\in
T_j$. With this notation in mind we let
\begin{eqnarray*}
\HDes(T)&\eqdef&\{i\in [n-1]:\, c_i=c_{i+1} \textrm{ and $i$ appears strictly above $i+1$}\};\\
h_i(T)&\eqdef&\#\{j\geq i:\,j\in \HDes(T)\};\\
k_i(T)&\eqdef& \left\{\begin{array}{ll}[c_n]_{r/q}& \textrm{if }i=n\\k_{i+1}+[c_i-c_{i+1}]_r& \textrm{if }i\in [n-1].
\end{array}\right.
\end{eqnarray*} 
It is clear that these definitions do not depend on the choice of the representative $(T_1,\ldots,T_r)$. For example if
\begin{center}
$T=\left( \begin{picture}(90,15)
\put(0,12){\line(1,0){20}}
\put(35,12){\line(1,0){20}}
\put(70,12){\line(1,0){20}}
\put(0,2){\line(1,0){20}}
\put(35,2){\line(1,0){20}}
\put(70,2){\line(1,0){20}}
\put(0,-8){\line(1,0){10}}
\put(35,-8){\line(1,0){20}}
\put(0,-8){\line(0,1){20}}
\put(10,-8){\line(0,1){20}}
\put(20,2){\line(0,1){10}}
\put(35,-8){\line(0,1){20}}
\put(45,-8){\line(0,1){20}}
\put(55,-8){\line(0,1){20}}
\put(70,2){\line(0,1){10}}
\put(80,2){\line(0,1){10}}
\put(90,2){\line(0,1){10}}
\put(3,-6){$5$}
\put(3,4){$1$}
\put(13,4){$4$}
\put(38,-6){$3$}
\put(38,4){$2$}
\put(48,-6){$9$}
\put(48,4){$8$}
\put(73,4){$6$}
\put(83,4){$7$}
\put(22,2){,}
\put(57,2){,}
\end{picture}\right)\in \St(3,1,3,9)$,
\end{center}
we have $(h_1,\ldots,h_9)=(3,3,2,2,1,1,1,1,0)$ and $(k_1,\ldots,k_9)=(5,3,3,2,2,1,1,0,0)$.
We define
$\lambda_i(T)\eqdef rh_i(T)+k_i(T)$, $\lambda(T)=(\lambda_1(T),\ldots,\lambda_n(T)$ and $\fmaj(T)=|\lambda(T)|$.
\begin{prop}\label{dimirrep}
The irreducible representations of $\qc$ are naturally parametrized by pairs $(\mu,\rho)$, where $\mu\in \Fer^*$ and $\rho\in (C_p)_{\mu}$, the stabilizer of any element in the class $\mu$. Moreover the dimension of the irreducible representation indexed by $(\mu,\rho)$ is independent of $\rho$ and it is equal to $|\St_\mu|$.
\end{prop}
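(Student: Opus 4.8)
The plan is to pass to the reflection group $G(r,p,n)$, for which $G(r,p,q,n)=G(r,p,n)/C_q$, and to combine the classical description of $\Irr\big(G(r,n)\big)$ (see e.g.\ \cite{St}) with Clifford theory for the normal subgroup $G(r,p,n)\trianglelefteq G(r,n)$, whose quotient is the \emph{cyclic} group $C_p$. Recall that $\Irr\big(G(r,n)\big)$ consists of the representations $V^{\lambda}$ indexed by $r$-tuples $\lambda=(\lambda^{(0)},\dots,\lambda^{(r-1)})$ of Ferrers diagrams with $\sum_i|\lambda^{(i)}|=n$, with $\dim V^{\lambda}=|\St_{\lambda}|$. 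Since $\zeta_q I=(\zeta_r I)^{r/q}$ is central in $G(r,n)$, it acts on $V^{\lambda}$ --- and hence on every constituent of $V^{\lambda}|_{G(r,p,n)}$ --- by the scalar $\zeta_q^{\sum_i i|\lambda^{(i)}|}$, so a constituent of $V^{\lambda}|_{G(r,p,n)}$ descends to a representation of $G(r,p,q,n)$ if and only if $\sum_i i|\lambda^{(i)}|\equiv 0\bmod q$, i.e.\ if and only if $\lambda\in\Fer(r,q,n)$.

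Next I analyse the constituents of $V^{\lambda}|_{G(r,p,n)}$. The linear characters of $G(r,n)$ trivial on $G(r,p,n)$ form a cyclic group $\widehat{C_p}$ of order $p$, generated by $g\mapsto\zeta_p^{c(g)}$, and tensoring by this character takes $V^{\lambda}$ to $V^{\lambda'}$, where $\lambda'$ is obtained from $\lambda$ by the cyclic shift $\lambda^{(j)}\mapsto\lambda^{(j-r/p)}$ of components; thus the $\widehat{C_p}$-orbits on $\Irr\big(G(r,n)\big)$ coincide with the $C_p$-orbits of Lemma~\ref{cyc} that are used to form $\Fer^{*}$. Fix $\lambda$ and let $m$ be the order of its $\widehat{C_p}$-stabilizer. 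By Frobenius reciprocity together with $\mathrm{Ind}_{G(r,p,n)}^{G(r,n)}\mathbf 1=\bigoplus_{\delta\in\widehat{C_p}}\delta$ one finds $\langle V^{\lambda}|_{G(r,p,n)},V^{\lambda}|_{G(r,p,n)}\rangle=m$; because $C_p$ is cyclic the relevant cohomological obstruction vanishes, so by Clifford's theorem the restriction is multiplicity-free with exactly $m$ irreducible constituents, each of dimension $\dim V^{\lambda}/m$, permuted transitively by $G(r,n)$. Clifford theory further identifies this set of $m$ constituents with the stabilizer of one of them in $C_p=G(r,n)/G(r,p,n)$ --- a cyclic group of order $m$ --- or, what amounts to the same, with its character group; and since twisting $V^{\lambda}$ by $\widehat{C_p}$ leaves its restriction unchanged, the whole $\widehat{C_p}$-orbit of $V^{\lambda}$ produces precisely these $m$ constituents.

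Combining the two steps, $\Irr\big(G(r,p,q,n)\big)$ is in natural bijection with the set of pairs $(\mu,\rho)$, where $\mu$ ranges over the $C_p$-orbits on $\Fer(r,q,n)$ --- that is, over $\Fer(r,q,n)/C_p=\Fer^{*}$ --- and $\rho$ ranges over the stabilizer $(C_p)_{\mu}$ of a representative $\nu$ of $\mu$; one must check that the condition $\sum_i i|\lambda^{(i)}|\equiv 0\bmod q$ is constant on $\widehat{C_p}$-orbits, which holds because the shift $\lambda^{(j)}\mapsto\lambda^{(j-r/p)}$ alters $\sum_i i|\lambda^{(i)}|$ by $(r/p)n$, a multiple of $q$ by the hypothesis $pq\mid rn$. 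The irreducible representation attached to $(\mu,\rho)$ has dimension $\dim V^{\nu}/m$ with $m=|(C_p)_{\mu}|$, independent of $\rho$. To identify this with $|\St_{\mu}|$, observe that $C_p$ acts freely on $\St(r,q,n)$ --- a nontrivial power of a generator would move the standard tableau component containing the entry $1$ to a different component --- and that a generator $\sigma$ of $C_p$ restricts to a bijection $\St_{\nu}\to\St_{\sigma\nu}$; hence the $|\mu|=p/m$ shapes in the orbit $\mu$ all carry the same number $|\St_{\nu}|=\dim V^{\nu}$ of standard tableaux, and, the action being free, $|\St_{\mu}|=(p/m)\dim V^{\nu}/p=\dim V^{\nu}/m$, as required.

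The step that requires care is the Clifford-theoretic bookkeeping in the middle: verifying that all multiplicities equal $1$ --- which is exactly where the cyclicity of $G(r,n)/G(r,p,n)$, hence the vanishing of the obstruction cocycle, is used --- and that the $m$ constituents lying over a given orbit are genuinely parametrized by $(C_p)_{\mu}$ rather than merely $m$ in number. The remaining ingredients --- the tableau count for $\dim V^{\lambda}$, the scalar by which $\zeta_q I$ acts, and the freeness of the $C_p$-action on $\St(r,q,n)$ --- are routine.
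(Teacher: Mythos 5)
Your proposal is correct in substance but takes a genuinely different, more self-contained route than the paper. The paper simply cites the $q=1$ case (the parametrization of $\Irr(G(r,p,n))$ from \cite{BB}), observes that $\Irr(\qc)$ consists of those irreducibles of $G(r,p,n)$ on which $\zeta_q$ acts trivially, and then detects which ones these are indirectly, via Stembridge's fake-degree theorem: the irreducible indexed by $(\mu,\rho)$ occurs in degree $k$ of the coinvariant algebra with multiplicity $\#\{T\in\St_\mu:\fmaj(T)=k\}$, so it is $\zeta_q$-fixed iff $\fmaj(T)\equiv 0 \bmod q$, i.e.\ iff $\sum_i i|\lambda^{(i)}|\equiv 0\bmod q$. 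You instead (a) reprove the $q=1$ case from scratch by Clifford theory for $G(r,p,n)\trianglelefteq G(r,n)$ with cyclic quotient $C_p$, and (b) handle the descent to the quotient by computing directly the scalar $\zeta_q^{\sum_i i|\lambda^{(i)}|}$ by which the central element $\zeta_q I$ acts on $V^{\lambda}$. Your route is more elementary (it needs only the classical $\Irr(G(r,n))$ and standard Clifford theory, not the fake-degree result), and your final identification $|\St_\mu|=\dim V^{\nu}/m$ via freeness of the $C_p$-shift on $\St(r,q,n)$ is a nice explicit argument that the paper leaves implicit inside the citation; the paper's route buys brevity and coherence with the $\fmaj$ machinery it uses elsewhere.

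One assertion in your Clifford-theoretic bookkeeping is wrong as stated, though it does not damage the conclusion: you identify the set of $m$ constituents of $V^{\lambda}|_{G(r,p,n)}$ with ``the stabilizer of one of them in $C_p$, a cyclic group of order $m$.'' In fact the stabilizer (inertia quotient) of an individual constituent $W_1$ in $C_p$ has order $p/m$, since the $m$ constituents form a single $C_p$-orbit; already for $B_2=G(2,2)$ and its Klein subgroup $G(2,2,2)$ the two constituents of the $2$-dimensional irreducible have trivial stabilizer while $m=2$. The group of order $m$ is the stabilizer of $V^{\lambda}$ under twisting by $\widehat{C_p}$, equivalently $(C_p)_\mu$, the stabilizer of the shape tuple $\nu$ under the shift action, and the set of constituents is a simply transitive set under the quotient $C_p$ modulo the order-$(p/m)$ inertia subgroup, hence in bijection (after a choice of base constituent) with a cyclic group of order $m=|(C_p)_\mu|$. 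With that sentence corrected, your counts ($m$ constituents over each orbit $\mu$, each of dimension $\dim V^{\nu}/m=|\St_\mu|$) and the resulting parametrization by pairs $(\mu,\rho)$, $\rho\in(C_p)_\mu$, go through; only the word ``naturally'' requires the same care the paper delegates to \cite{BB}.
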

\begin{proof}
This is known if $G$ is a reflection group, i.e. if $q=1$ (see \cite{BB} and the references cited there). On the other hand the irreducible representations of $G$ are exactly those of $G(r,p,1,n)$ which are fixed pointwise by $\zeta_q$. By \cite[Theorem 5.3]{St} we know that the multiplicity of the irreducible representation of $G(r,p,1,n)$ indexed by $(\mu,\rho)$ (with $\mu\in \Fer(r,1,p,n)$) in $R^{G(r,p,1,n)}_k$ is given by the number of elements $T\in\St_\mu$ such that $\fmaj(T)=k$. So we deduce that $\mu$ is a representation also for $G$ if and only if $\fmaj(T)\equiv 0 \mod q$ for some (or, equivalently, for all) $T\in \St_\mu$. If $\mu=(\lambda^{(0)},\ldots,\lambda^{(r-1)})$, this  condition is clearly equivalent to $\sum_ii|\lambda^{(i)}|\equiv_q 0$ and the result follows.
\end{proof}
For notational convenience in the next sections, if $\phi$ is an irreducible representation of $G$ indexed by a pair $(\mu,\rho)$ we let $\mu(\phi)\eqdef \mu \in \Fer^*$.

\section{Descent representations}\label{desrep}
In this section we introduce the descent representations of a group $\qc$ which turn out to be a refinement of the homogenous decomposition of the coinvariant algebra. In particular, if $G=G(r,p,q,n)$ and $|\lambda|\equiv 0 \mod q$,  we can consider the submodule $R^G_{\unlhd \lambda}$ of $R^G$ spanned by monomials of total degree $|\lambda|$ and partition degree $\unlhd \lambda$ and we can similarly define $R^G_{\lhd \lambda}$. Following and generalizing \cite{ABR,BC,BB} we denote the quotient module by
$$
R^G_\lambda\eqdef R^G_{\unlhd \lambda}/R^G_{\lhd \lambda}.
$$
We call the modules $R^G_\lambda$ the \emph{descent representations} of $G$.
 A straightforward application of Maschke's theorem implies that, for any $k\equiv 0 \mod q$, we have an isomorphism
$$
\varphi:R^G_k\stackrel{\cong}{\longrightarrow} \bigoplus_{\lambda \vdash k}R^G_{\lambda}
$$
such that every element in $\varphi^{-1}(R^G_{\lambda})$ can be represented by a homogeneous polynomial in $S_q[X]$ of partition degree $\lambda$.
We recall that if $g\in G^*$ then the monomial $a_g$ has partition degree $\lambda(g)$ and so it represents an element in $R^G_{\lambda(g)}$. So the following result is an immediate consequence of Theorem \ref{coba} and Corollary \ref{klo}.
\begin{lem}
Let $\lambda$ be a partition such that $|\lambda|\equiv 0 \mod q$. Then the set
$$
\{a_g:\,g\in G^* \textrm{ and }\lambda(g)=\lambda\}
$$
is a system of representatives of a basis of $R^G_{\lambda}$. In particular $\dim(R^G_{\lambda})=|\{g\in G^{*}:\, \lambda(g)=\lambda \}|$.
\end{lem}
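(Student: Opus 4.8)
The plan is to read off the lemma from Theorem~\ref{coba} and Corollary~\ref{klo}, as the text just before the statement suggests. Recall that $R^G_\lambda$ is by definition $R^G_{\unlhd\lambda}/R^G_{\lhd\lambda}$, and that the hypothesis $|\lambda|\equiv 0 \mod q$ ensures that every monomial of total degree $|\lambda|$ already lies in $S_q[X]$ (split its exponents into $|\lambda|/q$ blocks of size $q$). So I would reduce everything to two assertions: that $\{a_g:g\in G^*,\ \lambda(g)\unlhd\lambda\}$ represents a basis of $R^G_{\unlhd\lambda}$, and that $\{a_g:g\in G^*,\ \lambda(g)\lhd\lambda\}$ represents a basis of $R^G_{\lhd\lambda}$. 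Granting both, a quotient of a vector space by a subspace spanned by part of a basis has a basis represented by the remaining basis vectors, here exactly the $a_g$ with $\lambda(g)=\lambda$; the dimension formula is then immediate.

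Linear independence in both assertions is for free, since $\{a_g:g\in G^*\}$ is a basis of $R^G$ by Theorem~\ref{coba} and hence any subfamily is independent. One also checks membership: $a_g$ has partition degree $\lambda(g)$ and, since dominance compares only partitions of a fixed integer, $\lambda(g)\unlhd\lambda$ forces $|\lambda(g)|=|\lambda|$, so $a_g$ represents an element of $R^G_{\unlhd\lambda}$ (and of $R^G_{\lhd\lambda}$ when $\lambda(g)\lhd\lambda$). The real work is the spanning statement.

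For spanning, I would take a monomial $M\in S_q[X]$ of total degree $|\lambda|$ with $\lambda(M)\unlhd\lambda$ — these span $R^G_{\unlhd\lambda}$ — and write $M=\sum_{g\in G^*}f_g a_g$ with $f_g\in S_q[X]^G$, which is allowed by Theorem~\ref{coba}, with the triangularity provided by Corollary~\ref{klo}: $f_g\in S_q[X]_{\unlhd\mu_g}$ for a partition $\mu_g$ with $\mu_g+\lambda(g)\unlhd\lambda(M)$. Passing to $R^G$ and using that $I_+^G$ is generated by homogeneous invariants of positive degree, each $f_g$ is replaced by its constant term $f_g(0)$, so in $R^G$ one has $M=\sum_{g\in G^*}f_g(0)\,a_g$. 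The one observation needed is that $S_q[X]_{\unlhd\mu_g}$ consists of polynomials homogeneous of degree $|\mu_g|$, so $f_g(0)\neq 0$ forces $\mu_g=\emptyset$, whence $\lambda(g)=\mu_g+\lambda(g)\unlhd\lambda(M)\unlhd\lambda$. Thus $M$ lies in the span of $\{a_g:\lambda(g)\unlhd\lambda\}$, proving this family spans $R^G_{\unlhd\lambda}$; running the identical argument with $\lambda(M)\lhd\lambda$ (which yields $\lambda(g)\unlhd\lambda(M)\lhd\lambda$) gives that $\{a_g:\lambda(g)\lhd\lambda\}$ spans $R^G_{\lhd\lambda}$.

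The main obstacle, to the extent there is one, is exactly this triangularity bookkeeping — being sure that after reducing modulo the invariant ideal only the $a_g$ with $\lambda(g)\unlhd\lambda(M)$ remain — and it dissolves once one notes the homogeneity of $S_q[X]_{\unlhd\mu_g}$ and invokes Corollary~\ref{klo}. Everything else, including the final passage to a basis of the quotient indexed by $\{g\in G^*:\lambda(g)=\lambda\}$ and the dimension count, is formal.
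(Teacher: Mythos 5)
Your argument is correct and is exactly the route the paper intends: the paper offers no written proof beyond asserting that the lemma is an immediate consequence of Theorem~\ref{coba} and Corollary~\ref{klo}, and your writeup fleshes out precisely that deduction (triangularity modulo $I_+^G$ via constant terms of the invariant coefficients, then bases of $R^G_{\unlhd\lambda}$ and $R^G_{\lhd\lambda}$ and the quotient). No gaps; the homogeneity observation about $S_q[X]_{\unlhd\mu}$ is the right point to make the reduction rigorous.
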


Our next target is an explicit description of the irreducible decomposition of the modules $R^G_{\lambda}$.

\begin{prop}\label{fake}

Let $\phi$ be an irreducible representation of $G=\qc$. Then the multiplicity of $\phi$ in $R^G_{\lambda}$ is given by
$$
\langle \chi_{\phi},\chi_{R^G_{\lambda}}\rangle=|\{T\in \St_{\mu(\phi)}: \, \lambda(T)=\lambda|,$$
where $\mu(\phi)\in \Fer^*$ is defined at the end of \S \ref{irrep}.
\end{prop}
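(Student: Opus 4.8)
The plan is to reduce everything to the reflection-group case $q=1$, for which this is known, and then to identify the relevant pieces inside $R^{G(r,p,1,n)}$ that survive the passage to the quotient group $G=\qc$. Concretely, recall from the proof of Proposition~\ref{reg} that $R^G$ is the subalgebra of $R^{G(r,p,1,n)}$ consisting of the elements whose degree is a multiple of $q$, and that the $G$-module structure is simply the restriction of the $G(r,p,1,n)$-module structure to those pieces fixed by $C_q$. Since the partition-degree filtration is compatible with this, for any $\lambda$ with $|\lambda|\equiv 0\bmod q$ we have $R^G_\lambda = (R^{G(r,p,1,n)}_\lambda)$ as a vector space, with the $G$-action obtained by restriction. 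Thus the multiplicity of an irreducible $\phi$ of $G$ in $R^G_\lambda$ equals the multiplicity of $\phi$, viewed as an irreducible of $G(r,p,1,n)$ (it is one, being $C_q$-fixed), in $R^{G(r,p,1,n)}_\lambda$.

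Next I would invoke the known case, i.e.\ the description of the descent representations of the reflection group $G(r,p,1,n)$ from \cite{ABR,BC,BB}: the multiplicity of the irreducible indexed by $(\nu,\rho)$ with $\nu\in\Fer(r,1,p,n)$ in $R^{G(r,p,1,n)}_\lambda$ is the number of $T\in\St_\nu$ (standard Young tableaux of shape $\nu$ in the sense of \S\ref{irrep}) with $\lambda(T)=\lambda$. Here $\mu(\phi)$ is defined in \S\ref{irrep} to be the class in $\Fer^*=\Fer(r,q,p,n)$ corresponding to the shape $\nu\in\Fer(r,1,p,n)$ labelling $\phi$ as a representation of $G$; by Lemma~\ref{deg} (or rather its tableau analogue, noting $\fmaj(T)\equiv\sum_i i|\lambda^{(i)}|\bmod r$) and Lemma~\ref{cyc}, passing from $\nu\in\Fer(r,1,p,n)$ to its $C_q$-class is exactly the passage to $\Fer^*$, and the statistics $h_i,k_i,\lambda(\cdot)$ on standard tableaux are invariant under the $C_q$-action of Lemma~\ref{cyc}. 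Hence $\St_{\mu(\phi)}$, the tableaux of the shape $\mu(\phi)\in\Fer^*$, is in statistic-preserving bijection with the $\St_\nu$ appearing in the reflection-group count, so the two numbers of tableaux with $\lambda(T)=\lambda$ agree.

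The remaining point is a bookkeeping one: one must check that when $\phi$ is an irreducible of $G$, it really does occur in $R^{G(r,p,1,n)}$ only in pieces of degree divisible by $q$, so that restricting to $R^G$ loses nothing and adds nothing. This is precisely the argument already used in Proposition~\ref{reg}: $\phi$ is $C_q$-fixed, and the isotypic component of a $C_q$-fixed representation in the graded module $R^{G(r,p,1,n)}$ lies in the homogeneous components of degree a multiple of $q$. Since the whole partition-degree-$\lambda$ piece with $|\lambda|\equiv0\bmod q$ sits in degree $|\lambda|$, a multiple of $q$, the $\phi$-isotypic part of $R^{G(r,p,1,n)}_\lambda$ is entirely contained in $R^G_\lambda$ and the multiplicities coincide.

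I expect the main obstacle to be purely notational: carefully matching the combinatorial labels, i.e.\ verifying that the shape $\mu(\phi)\in\Fer^*$ assigned in \S\ref{irrep} to an irreducible $\phi$ of $G$ is exactly the $C_q$-orbit of the shape $\nu\in\Fer(r,1,p,n)$ that labels $\phi$ as a representation of $G(r,p,1,n)$, and that the tableau statistics $h_i,k_i$ (hence $\lambda(T)$) are genuinely constant on $C_q$-orbits of $\St(r,1,p,n)$. Both of these are essentially Lemma~\ref{cyc} together with the observation that the $C_q$-action cyclically relabels the components $\lambda^{(j)}$ without changing which integer lies in which box relative to the ``colour'' differences $c_i-c_{i+1}$ that enter $k_i$, and without changing the homogeneous-descent set at all. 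Once this identification is in place the proof is just a concatenation of the $q=1$ case with the restriction argument of Proposition~\ref{reg}.
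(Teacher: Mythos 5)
Your overall route --- identifying $R^G_\lambda$ with $R^{G(r,p,1,n)}_\lambda$ for $q\mid|\lambda|$ (where $C_q$ acts by the scalar $\zeta_q^{|\lambda|}=1$, so the $W$-action factors through $G$ and every constituent of that piece is $C_q$-fixed), and then quoting the reflection-group case \cite[Theorem 10.5]{BB} together with the parametrization of $\Irr(G)$ from Proposition \ref{dimirrep} --- is exactly the argument the paper intends, and that part is sound. The flaw is in the label-matching step you single out as the ``main obstacle''. The parametrization of Proposition \ref{dimirrep} does \emph{not} pass to $C_q$-orbits of shapes: $\Fer^*=\Fer(r,q,p,n)$ is the set of $C_p$-classes of $r$-tuples satisfying $\sum_i i|\lambda^{(i)}|\equiv 0 \mod q$, hence a \emph{subset} of $\Fer(r,1,p,n)$, and an irreducible $\phi$ of $G$ carries the very same $C_p$-class $\nu$ that labels it as a representation of $G(r,p,1,n)$, merely subject to this congruence; two shapes in one $C_q$-orbit label distinct irreducibles of $G(r,p,1,n)$, and when both satisfy the congruence they descend to \emph{distinct} irreducibles of the quotient. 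Consequently the verification you propose --- that $h_i,k_i$ are constant on $C_q$-orbits of $\St(r,1,p,n)$ --- is not only unnecessary but false in general: on $\St(r,1,p,n)$ one has $k_n=[c_n]_{r/p}$, and the shift $c_i\mapsto c_i+r/q$ changes $[c_n]_{r/p}$ (hence $k_i$, $\lambda(T)$ and $\fmaj(T)$) unless $q\mid p$; already for $r=6$, $p=2$, $q=3$ it fails, consistent with the fact that the shifted shape occurs in different degrees of the coinvariant algebra.

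The correct identification is simpler and makes your ``statistic-preserving bijection'' trivial: for $\mu(\phi)\in\Fer^*\subseteq\Fer(r,1,p,n)$, the set $\St_{\mu(\phi)}$ computed inside $\St(r,q,p,n)$ is literally the same set of $C_p$-classes of fillings as $\St_{\mu(\phi)}$ computed inside $\St(r,1,p,n)$, with literally the same statistics (both use $C_p$-classes and $k_n=[c_n]_{r/p}$; the middle parameter only restricts which shapes are allowed). With that substitution replacing the $C_q$-orbit argument, your proof is exactly the paper's two-line extension of the $q=1$ case.
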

\begin{proof}
This is known for reflection groups $G(r,p,n)$ (see \cite[Theorem 10.5]{BB}). The extension to projective
reflection groups $\qc$ is straightforward by the description of the
irreducible representations of $G$ in \S\ref{irrep}.
\end{proof}
This proposition unifies and generalizes the corresponding coarse results of Lusztig (unpublished) and Kra\'skiewicz-Weyman \cite{KW} in Type A and Stembridge \cite{St} for reflection groups and the corresponding refined results of Adin-Brenti-Roichman \cite{ABR} in Type A and B and of Bagno-Biagioli \cite{BB} for reflection groups.

\section{Tensorial and diagonal invariants}\label{tendia}

In this section we describe the main result of this work (Theorem \ref{main}) which presents an explicit basis for the diagonal invariant algebra of a projective reflection group $G=\qc$ (considered as a free module over the tensorial invariant algebra) in terms of the dual group $G^*$. This result is new also in the generality of reflection groups (see \cite{GG,BL,BB1} for related results in type $A$ and $B$). Here it is really apparent that not only the combinatorics of $G^*$ plays a crucial role in the invariant theory of $G$ as in the previous sections, but also its algebraic structure.\\
Let $S_q[X]^{\otimes k}$ be the $k$-th tensor power of the algebra of polynomials $S_q[X]$ defined in \S\ref{coal}. On this algebra we consider the natural action of the group $G^k$ (where the $i$-th coordinate of $G^k$ acts on the $i$-th factor in $S_q[X]^{\otimes k}$) and of its diagonal subgroup $\Delta G$. We are particularly interested in the corresponding invariant algebras. Every monomial in $S_q[X]^{\otimes k}$ can be described by a $k\times n$-matrix with non negative integer entries such that the sum in each row is divided by $q$. To any such matrix $A$ we associate the monomial $\mathcal X^A\eqdef \prod_{i,j}x_{i,j}^{a_{i,j}}$. Here and in what follows we identify $S_q[X]^{\otimes k}$ with the algebra of polynomials $S_q[\xk]=S_q[x_{i,j}]$ (where $i\in[k]$ and  $j\in[n]$) spanned by monomials whose degree in $x_{i,1},\ldots,x_{i,n}$ is a multiple of $q$ for all $i\in [k]$. The algebra $S_q[\xk]$ is  multigraded by $k$-tuples of partitions with at most $n$ parts: we just say that a monomial $M$ is homogeneous of \emph{multipartition degree} $(\lambda^{(1)},\ldots,\lambda^{(k)})$ if its exponent partition with respect to the variables $x_{i,1},\ldots x_{i,n}$ is $\lambda^{(i)}$ for all $i$. We write in this case $\lambda^{(i)}(M)\eqdef \lambda^{(i)}$ for all $i$ and $\Lambda(M)\eqdef(\lk)$. 

We refer to the algebra $S_q[\xk]^{\Delta G}$ as the \emph{diagonal invariant algebra of $G$}. It is clear that $S_q[\xk]^{\Delta G}$ is generated by the polynomials 
$$(\mathcal X^A)^{\#}\eqdef\frac{1}{|G|}\sum_{g\in \Delta G}g(\mathcal X^A),$$
and that the non zero polynomials of this form are linearly independent.
\begin{lem}\label{colu}
Let $A$ be a $k\times n$ matrix with row sums divided by $q$ and let $s_i$ be the sum of the entries in its $i$-th column. Then $(\mathcal X^A)^{\#}\neq 0$ if and only if the following two conditions are satisfied
\begin{enumerate}
\item $s_i\equiv s_j$ for all $i,j$;
\item $ps_i \equiv 0$ for all $i$.
\end{enumerate}
\end{lem}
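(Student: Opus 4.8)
The plan is to strip the permutation part off the averaging, reducing the nonvanishing of $(\mathcal X^A)^{\#}$ to the triviality of a single linear character on the diagonal ``torus'' of $G$, and then translate that into the two congruences by elementary character theory of finite abelian groups. Throughout, $A^{\sigma}$ denotes the matrix obtained from $A$ by rearranging its columns according to $\sigma\in S_n$, and $s=(s_1,\dots,s_n)$ is the vector of column sums of $A$.

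First I would reduce to the reflection group $G(r,p,n)$. Since every row sum of $A$ is divisible by $q$, the total degree of $\mathcal X^A$ is divisible by $q$; as $C_q=\langle\zeta_qI\rangle$ scales a monomial of degree $d$ by $\zeta_q^{\,d}$, it acts trivially on $\mathcal X^A$, so the average of $g(\mathcal X^A)$ over $\Delta G$ equals the average over $\Delta G(r,p,n)$. Now write $G(r,p,n)=\widetilde T\rtimes S_n$, where $\widetilde T=\{[e;c_1,\dots,c_n]:\sum c_i\equiv 0\bmod p\}$ is the subgroup of diagonal elements and $S_n$ is embedded as $\sigma\mapsto[\sigma;0,\dots,0]$. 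For $g=[\sigma;c_1,\dots,c_n]$ one has $g(\mathcal X^A)=\varepsilon_g\,\mathcal X^{A^{\sigma}}$, where $\varepsilon_g$ is a root of unity depending only on $(c_1,\dots,c_n)$ and on the column sums of $A^{\sigma}$. Grouping $\sum_g g(\mathcal X^A)$ by the resulting monomial and using linear independence of monomials, one sees that $(\mathcal X^A)^{\#}\neq 0$ if and only if $\sum_{g\in H}\varepsilon_g\neq 0$, where $H=\{g:A^{\sigma}=A\}=\widetilde T\rtimes\operatorname{Stab}(A)$ is the stabiliser, up to a scalar, of the monomial $\mathcal X^A$ (here $\operatorname{Stab}(A)\le S_n$ is the Young subgroup permuting equal columns of $A$): indeed the contribution of any other coset $g_0H$ to its monomial $\mathcal X^{A^{\sigma(g_0)}}$ equals $(\textrm{constant})\cdot\sum_{h\in H}\varepsilon_h$, hence vanishes as soon as $\sum_{h\in H}\varepsilon_h=0$. (Equivalently, the $\#$-operator is the orthogonal projection onto $G$-invariants for the inner product in which distinct monomials are orthonormal, $G$ acting unitarily because it permutes monomials up to roots of unity; so $(\mathcal X^A)^{\#}$ is nonzero iff its $\mathcal X^A$-coefficient, which equals $\|(\mathcal X^A)^{\#}\|^{2}$, is nonzero.)

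Next, $g\mapsto\varepsilon_g$ restricts to a linear character of $H$ which is trivial on $\operatorname{Stab}(A)$ (these elements merely permute equal columns, introducing no scalar), so $\sum_{g\in H}\varepsilon_g\neq 0$ if and only if this character is trivial on $\widetilde T$. On $\widetilde T$ one computes $\varepsilon_{[e;c]}=\zeta_r^{\pm\langle c,s\rangle}$, the sign being immaterial since we sum over all of $\widetilde T$. Thus the question becomes: when is the character $c\mapsto\zeta_r^{\langle c,s\rangle}$ of $(\mathbb Z/r)^{n}$ trivial on the kernel $\widetilde T$ of the surjection $\phi:(\mathbb Z/r)^{n}\to\mathbb Z/p$, $\phi(c)=\sum c_i$? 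This holds exactly when the character factors through $(\mathbb Z/r)^{n}/\widetilde T\cong\mathbb Z/p$, i.e. when there is an integer $t$ with $s_j\equiv (r/p)t\pmod r$ for every $j$; evaluating on the standard unit vectors shows this is equivalent to the conjunction of $s_i\equiv s_j\pmod r$ for all $i,j$ and $ps_i\equiv 0\pmod r$ for all $i$, which are conditions (1) and (2).

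The one conceptual point is the reduction in the second paragraph: once the permutation part of $\Delta G$ is factored out, the nonvanishing is governed by a single diagonal character sum over $\widetilde T$; after that, everything is routine finite abelian group character theory. The only places demanding care are the passage from $\Delta G$ to $\Delta G(r,p,n)$ (which rests on $q$ dividing the total degree of $\mathcal X^A$) and the bookkeeping of how the elements $[\sigma;c_1,\dots,c_n]$ act on the variables $x_{i,j}$.
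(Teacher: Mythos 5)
Your proof is correct and takes essentially the same route as the paper's: both strip off the permutation part (which only rearranges monomials) and reduce the question to the scalar $\zeta_r^{\sum_j c_j s_j}$ by which the diagonal elements $[1;c_1,\ldots,c_n]$ act on $\mathcal X^A$, so that conditions (1) and (2) appear as the triviality of this character on the diagonal subgroup. The packaging differs only mildly -- the paper handles the vanishing direction via sums over the cyclic subgroups generated by the elements $h_i$ and $h_{i,j}$ and the nonvanishing direction via positivity of $\sum_{\sigma\in\Delta S_n}\sigma(\tilde c\,\mathcal X^A)$, whereas you use character orthogonality on all of $\widetilde T$ together with the orthogonal-projection/coefficient-of-$\mathcal X^A$ argument -- but the substance is the same.
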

\begin{proof}
To prove that $(\mathcal X^A)^{\#}=0$ it is enough to show that there exists a subgroup $H$ of $G$ such that $\sum_{h\in H}h(\mathcal X^A)=0$. Let $h_i=[1;c_1,\ldots,c_r]$, where $c_k=0$ if $k\neq i$ and $c_i=p$ and $H_i$ be the subgroup of $G$ generated by $h_i$. Then
$$
\sum_{h\in H_i}h(\mathcal X^A)=(\zeta_{r/p}^{s_i}+\cdots+\zeta_{r/p}^{rs_i/p})\mathcal X^A=\left\{\begin{array}{ll}\frac{r}{p}\mathcal X^A& \textrm{ if }ps_i\equiv 0\\0& \textrm{otherwise.}
\end{array}\right.
$$
Now let  $h_{i,j}=[1;c_1,\ldots,c_r]$ where $c_k=0$ if $k\neq i,j$, $c_i=1$ and $c_j=-1$. Let $H_{i,j}$ be the subgroup generated by $h_{i,j}$. Then
$$
\sum_{h\in H_{i,j}}h(\mathcal X^A)=(1+\zeta_{r}^{s_i-s_j}+\cdots+\zeta_{r}^{r(s_i-s_j)})\mathcal X^A=\left\{\begin{array}{ll}{r}\mathcal X^A& \textrm{ if }s_i-s_j\equiv 0\\0& \textrm{otherwise.}
\end{array}\right.
$$
To prove the converse let $H$ be the subgroup of $G$ of elements $h$
such that $|h|=1$ and let $A$ be a matrix satisfying (1) and
(2). Since $H$ is generated by the $h_i$'s and the $h_{i,j}$'s, we observe that if $h\in H$ then $h(\mathcal X^A)=c\mathcal X^A$, where $c$ is a positive integer. Then we have
\begin{eqnarray*}
 (\mathcal X^A)^{\#}&=&\frac{1}{|G|}\sum_{\sigma \in \Delta S_n}\sigma \left(\sum_{h\in H} h(\mathcal X^A)\right)\\
 &=&\frac{1}{|G|}\sum_{\sigma \in \Delta S_n}\sigma (\tilde c \mathcal X^A),
\end{eqnarray*}
(where $\tilde c$ is a positive integer), which is clearly non-zero.
\end{proof}
We recall that a \emph{$k$-partite partition} (see \cite{GG,G}) is a $k\times n$ matrix $A=(a_{i,j})$ with non-negative integer entries such that  $a_{i,j}\geq a_{i,j+1}$ whenever $a_{h,j}=a_{h,j+1}$ for all $h<i$.
We denote by $\mathcal B_k(r,p,q,n)$ the set of $k\times n$-matrices which are $k$-partite partitions, with row sums divided by $q$ and column sums satisfying (1) and (2) in Lemma \ref{colu}.
\begin{cor}
The set $\{(\mathcal X^{A})^{\#}: A\in \mathcal B_k(r,p,q,n)\}$ is a basis for the diagonal invariant algebra of $G$. 
\end{cor}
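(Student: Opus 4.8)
The plan is to reduce the statement to Lemma~\ref{colu} and to the standard fact that each $S_n$-orbit of column-permutations of a $k\times n$ matrix contains a unique $k$-partite partition. Recall that, by the remark preceding the corollary, the polynomials $(\mathcal X^A)^{\#}$, with $A$ ranging over all $k\times n$ matrices with non-negative integer entries and row sums divisible by $q$, span $S_q[\xk]^{\Delta G}$.

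First I would make precise how $\Delta G$ permutes monomials. Acting diagonally, an element $g=[\sigma;c_1,\dots,c_n]\in G$ sends each variable $x_{i,j}$ to a root of unity times $x_{i,\tau(j)}$, where $\tau\in S_n$ depends only on $\sigma$; hence it sends a monomial $\mathcal X^A$ to a scalar multiple of $\mathcal X^{A'}$, where $A'$ is obtained from $A$ by permuting its columns. Taking all $c_i=0$ shows that every column-permutation of $A$ arises in this way, with trivial scalar. Thus, regarded as a set of monomials, the $\Delta G$-orbit of $\mathcal X^A$ consists exactly of the monomials $\mathcal X^{A'}$ with $A'$ a column-permutation of $A$; and $(\mathcal X^A)^{\#}$, being an average over that orbit, is a scalar multiple of $(\mathcal X^{A'})^{\#}$ whenever $A'$ is a column-permutation of $A$, whereas $(\mathcal X^A)^{\#}$ and $(\mathcal X^{A'})^{\#}$ have disjoint monomial supports otherwise.

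Next I would observe that the conditions defining $\mathcal B_k(r,p,q,n)$ respect column-permutations: permuting columns leaves the row sums unchanged and merely permutes the column sums, so divisibility of the row sums by $q$ and conditions (1), (2) of Lemma~\ref{colu} hold for a matrix if and only if they hold for every column-permutation of it. Since each column-permutation orbit contains exactly one $k$-partite partition---the representative whose columns, read as vectors in $\mathbb Z^k$, are arranged in weakly decreasing lexicographic order---we obtain a bijection between $\mathcal B_k(r,p,q,n)$ and the set of column-permutation orbits of matrices with row sums divisible by $q$ whose column sums satisfy (1) and (2).

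Finally I would assemble the proof. By Lemma~\ref{colu}, $(\mathcal X^A)^{\#}\neq 0$ precisely when the column sums of $A$ satisfy (1) and (2), and in that case $(\mathcal X^A)^{\#}$ is a scalar multiple of $(\mathcal X^{A_0})^{\#}$ for the unique $k$-partite partition $A_0$ in the column-permutation orbit of $A$, which lies in $\mathcal B_k(r,p,q,n)$ by the previous paragraph. Hence $\{(\mathcal X^A)^{\#}:A\in\mathcal B_k(r,p,q,n)\}$ already spans $S_q[\xk]^{\Delta G}$; its members are nonzero, again by Lemma~\ref{colu}, and are supported on pairwise disjoint $\Delta G$-orbits of monomials, hence linearly independent, so they form a basis. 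The only step carrying real content is the identification of the $\Delta G$-orbits of monomials with column-permutation orbits together with the uniqueness of the $k$-partite representative, which is what lets Lemma~\ref{colu} be applied orbit by orbit and collapses the large spanning family of all $(\mathcal X^A)^{\#}$ to the independent one indexed by $\mathcal B_k(r,p,q,n)$; the behavior of the defining congruences under column-permutations and the disjointness of supports are routine.
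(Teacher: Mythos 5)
Your proposal is correct and follows essentially the paper's (implicit) argument: the paper takes the spanning statement and the linear independence of distinct nonzero averages $(\mathcal X^A)^{\#}$ as the remark preceding Lemma~\ref{colu}, and the corollary then follows from that lemma together with the fact that each column-permutation orbit contains a unique $k$-partite partition. You have simply written out these details (orbits of monomials under $\Delta G$ correspond to column permutations, disjoint supports give independence), which is exactly what the paper leaves to the reader.
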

We recall that the algebra $S_q[\xk]^{\Delta G}$, being Cohen-Macaulay (see \cite[Proposition 3.1]{Sta1}), is a free module over the \emph{tensorial invariant algebra} $S_q[\xk]^{G^k}$, and our next target is the description of a basis for $S_q[\xk]^{\Delta G}$ as a free $S_q[\xk]^{G^k}$-module.  
\begin{defn}
Let $\lambda$ be a partition with $n$ parts and $g\in G(r,p,q,n)$. We say that $\lambda$ is $g$-compatible if
 $\lambda-\lambda(g)$ is a partition and
 $g=[|g|;\,\lambda]$.

\end{defn}
We note that in the case of the symmetric group the condition $g=[|g|;\,\lambda]$ in the previous definition is empty and we obtain an equivalent definition of a $\sigma$-compatible partition given in \cite{GG}. The special case of the following result where $G$ is the symmetric group is proved in \cite{GG}.
\begin{thm}\label{bije}
There is a bijection between $\mathcal B_k(r,p,q,n)$ and  $(2k)$-tuples $(g_1,\ldots,g_k;\lk)$ \\ where 
\begin{itemize}
\item $g_1,\ldots,g_k\in G ^*$ are such that $g_1\cdots g_k=1$;
\item $\lambda^{(i)}$ is a $g_i$-compatible partition.
\end{itemize}
The bijection is given by
$$
\Phi(g_1,\ldots,g_k;\lk)=\left( \begin{array}{llll}\lambda^{(1)}_1&\lambda^{(1)}_2&\cdots &\lambda^{(1)}_n\\
                                            \lambda^{(2)}_{\sigma_1(1)}&\lambda^{(2)}_{\sigma_1(2)}&\cdots &\lambda^{(2)}_{\sigma_1(n)}\\
&&&\\
\lambda^{(k)}_{(\sigma_1\ldots\sigma_{k-1})(1)}&\lambda^{(k)}_{(\sigma_1\ldots\sigma_{k-1})(2)}&\cdots&\lambda^{(k)}_{(\sigma_1\ldots\sigma_{k-1})(n)}
\end{array}
\right),
$$
where $\sigma_i=|g_i|$ and the composition of permutations is from left to right.
\end{thm}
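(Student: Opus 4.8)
The plan is to verify that the map $\Phi$ is well-defined and then construct its inverse explicitly, since both the forward and backward directions amount to bookkeeping with the statistics $h_i$, $k_i$ and the compatibility condition. First I would check that $\Phi(g_1,\ldots,g_k;\lk)$ actually lands in $\mathcal B_k(r,p,q,n)$. The $i$-th row of $\Phi$ is the partition $\lambda^{(i)}$ read in the order prescribed by $\sigma_1\cdots\sigma_{i-1}$, so its exponent partition is $\lambda^{(i)}$; since $\lambda^{(i)}$ is $g_i$-compatible, $|\lambda^{(i)}|=\fmaj(g_i)\equiv 0\bmod q$ (as $g_i\in G^*$, using Lemma \ref{deg} with the roles of $p$ and $q$ interchanged), so the row sums are divisible by $q$. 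For the column sums, the $j$-th column sum equals $\sum_i\lambda^{(i)}_{(\sigma_1\cdots\sigma_{i-1})(j)}$; I would show this is independent of $j\bmod r$ and $\equiv 0\bmod$ the appropriate modulus by writing $g_1\cdots g_k=1$ in coordinate form $[\sigma_1\cdots\sigma_k;\,c_1,\ldots,c_n]$ with all $c_j\equiv 0$, and tracking how the entries $\lambda^{(i)}_{(\sigma_1\cdots\sigma_{i-1})(j)}$ reduce modulo $r$ to the color increments $c^{(i)}$ of $g_i$; telescoping the recursion defining $k_i$ gives exactly that the column sum reduces mod $r$ to $\sum_i c^{(i)}_{\text{(appropriate index)}}$, which sum to $0$ by $\prod g_i=1$. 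Conditions (1) and (2) of Lemma \ref{colu} then follow because $g_i\in G^*=(G(r,q,p,n),p)$ forces the total color of $\prod g_i$ to be controlled modulo $\GCD(r,rn/p)$.

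Next I would construct the inverse. Given $A\in\mathcal B_k(r,p,q,n)$, the $k$-partite partition condition guarantees that row $i$ of $A$ is the partition $\lambda^{(i)}(A)$ rearranged by a permutation $\tau_i$ that is consistent with the lexicographic tie-breaking, and in fact $\tau_1\le\tau_2\le\cdots$ in the sense encoded by $k$-partiteness. I would then set $\lambda^{(i)}\eqdef\lambda^{(i)}(A)$, define $\sigma_i$ by $\sigma_1\cdots\sigma_{i-1}=\tau_i$ (so $\sigma_i=\tau_{i-1}^{-1}\tau_i$), and define $g_i\eqdef[\sigma_i;\,\lambda^{(i)}]$ as an element of $G(r,1,p,n)$, hence of $G^*$ once I check $\fmaj(g_i)=|\lambda^{(i)}|\equiv 0\bmod p$ — but wait, this requires $|\lambda^{(i)}|\equiv 0\bmod p$, which comes from column-sum condition (2) combined with the row sums: summing all entries of $A$ one way gives $\sum_i|\lambda^{(i)}|$ and the other way gives $\sum_j s_j\equiv 0\bmod p$ by (2), and (1) lets one conclude each $|\lambda^{(i)}|$ individually lies in $p\mathbb Z$ modulo the relevant refinement — here I would be careful and instead argue directly that $\lambda^{(i)}$ being $g_i$-compatible with $g_i\in G(r,1,p,n)$ already forces, via Lemma \ref{deg}, the congruence needed for $g_i\in G^*$. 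Finally I must check $g_1\cdots g_k=1$: the underlying permutation is $\sigma_1\cdots\sigma_k=\tau_k$, which must be the identity — this is forced because the last "virtual" index $\tau_k$ has to bring everything back, and more honestly because condition (1)+(2) together with the $k$-partite normalization pin down $\tau_k=\mathrm{id}$; and the total color vanishes by reversing the telescoping computation from the first paragraph.

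The two constructions are visibly mutually inverse since each only repackages the same data: the rows of $A$ are the $\lambda^{(i)}$, and the permutations $\sigma_i$ are recovered from the relative orderings of the columns. The one genuinely delicate point — and the step I expect to be the main obstacle — is showing that the constraints defining $\mathcal B_k(r,p,q,n)$ (divisibility of row sums by $q$, plus Lemma \ref{colu}'s conditions (1) and (2) on column sums) match up \emph{exactly}, with no slack, against the constraints on the tuple side ($g_i\in G^*$, i.e. total color $\equiv 0\bmod p$ in the sense of $G(r,q,p,n)$, and $\prod g_i=1$). This is where the duality $p\leftrightarrow q$ is doing real work: the row condition (divisibility by $q$) is what makes each $\lambda^{(i)}$ legitimately arise from $S_q[X]$, while the column conditions (governed by $p$) are precisely what Lemma \ref{deg} translates into membership of $g_i$ in $G^*=(G(r,q,p,n),p)$ and of the product in the kernel. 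I would handle this by setting up, once and for all, the dictionary between a $k$-partite partition row, the pair $(\sigma_i,\lambda^{(i)})$, and the coordinate vector of $g_i$ modulo $r$ (via $\lambda^{(i)}_j\equiv k_j(g_i)\bmod r$ from Lemma \ref{oldnew}), and then every divisibility assertion becomes a short congruence computation; the case $q=1$ reduces to \cite{GG}, which serves as a sanity check for the combinatorial core while the general case adds only the modular refinements just described.
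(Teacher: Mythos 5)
Your strategy is essentially the paper's: reduce the combinatorial core to the Garsia--Gessel bijection for $k$-partite partitions (the paper simply cites \cite[Remark 2.2]{GG} where you re-derive it informally) and then match the row/column divisibility constraints of $\mathcal B_k(r,p,q,n)$ against the conditions $g_i\in G^*$ and $g_1\cdots g_k=1$. But two concrete points are missing or wrong. First, in the inverse direction you never verify that $\lambda^{(i)}$ is $g_i$-compatible rather than merely $\sigma_i$-compatible. Compatibility with $g_i=[\sigma_i;\lambda^{(i)}]$ requires in addition that $\lambda^{(i)}-\lambda(g_i)$ be a partition, i.e. $\lambda^{(i)}_j-\lambda^{(i)}_{j+1}\geq\lambda_j(g_i)-\lambda_{j+1}(g_i)$ for all $j$; this is strictly stronger than $\sigma_i$-compatibility and is not just ``repackaging the same data''. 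The paper proves it by observing that the two sides are congruent modulo $r$ and then splitting on whether $j\in\HDes(g_i)$ (right side equals $r$, left side is $\geq 1$ by $\sigma_i$-compatibility, hence $\geq r$) or $j\notin\HDes(g_i)$ (right side lies in $[0,r-1]$). Without this step the tuple you construct need not lie in the claimed codomain, so surjectivity is not established.

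Second, your bookkeeping of which constraint yields which property has $p$ and $q$ interchanged exactly at the point you yourself call the main obstacle. Membership $g_i\in G^*=G(r,q,p,n)$ means the lift $[\sigma_i;\lambda^{(i)}]$ lies in $G(r,q,n)$, i.e. $|\lambda^{(i)}|\equiv 0\bmod q$: this is precisely the row-sum condition in the definition of $\mathcal B_k(r,p,q,n)$, not a condition modulo $p$ as you first write, and not a consequence of the column conditions as your closing paragraph asserts. The column conditions (1) and (2) of Lemma \ref{colu} play a different role: they say that the product of the lifts $[\sigma_1;\lambda^{(1)}]\cdots[\sigma_k;\lambda^{(k)}]$ is the scalar $\zeta_r^{s}I$ with $s$ a common multiple of $r/p$, which is exactly what makes $g_1\cdots g_k=1$ in the quotient by $C_p$ (and, read in the forward direction, is how conditions (1) and (2) are deduced from $g_1\cdots g_k=1$). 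Your proposed repair via ``compatibility forces the congruence through Lemma \ref{deg}'' does not fix this, since compatibility is not what carries the divisibility. A smaller slip: $|\lambda^{(i)}|=\fmaj(g_i)$ is false in general (compatibility only gives $\lambda^{(i)}-\lambda(g_i)\in\Par(r,p,n)$); the needed congruence $|\lambda^{(i)}|\equiv\fmaj(g_i)\bmod q$ does hold, but only because $pq\mid rn$, which should be said.
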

\begin{proof}
By \cite[Remark 2.2]{GG},  we know that there exists a bijection $\Phi$ between $2k$-tuples  $(\sigma_1,\ldots,\sigma_k;\lk)$, (where $\lambda^{(i)}$ is $\sigma_i$-compatible for all $i$ and $\sigma_1\cdots \sigma_k=1$) and $k$-partite partitions given by 
\begin{equation}\label{aly}\Phi(\sigma_1,\ldots,\sigma_k;\lk)=\left( \begin{array}{llll}\lambda^{(1)}_1&\lambda^{(1)}_2&\cdots &\lambda^{(1)}_n\\
\lambda^{(2)}_{\sigma_1(1)}&\lambda^{(2)}_{\sigma_1(2)}&\cdots &\lambda^{(2)}_{\sigma_1(n)}\\
&&&\\
\lambda^{(k)}_{(\sigma_1\ldots\sigma_{k-1})(1)}&\lambda^{(k)}_{(\sigma_1\ldots\sigma_{k-1})(2)}&\cdots&\lambda^{(k)}_{(\sigma_1\ldots\sigma_{k-1})(n)}
\end{array}
\right).
\end{equation}
Since $g_i$-compatible implies $\sigma_i$-compatible we deduce that $\Phi(g_1,\ldots,g_k;\lk)$ is a $k$-partite partition. 
Using the fact that $\lambda^{(i)}$ is $g_i$-compatible we compute the sum of the elements in the $i$-th row:
$$
\sum_j \lambda^{(i)}_{\sigma_1\ldots\sigma_{i-1}(j)}=  \sum_j \lambda^{(i)}_{j} \equiv 0 \,\,\mod q,
$$
since $g_i\in G^*$. Now we have to verify that the conditions on the column sums are satisfied. For $j\in [n]$ let $e_j$ be the $1\times n$ matrix whose $i$-th coordinate is 1 and all the others are zero. Consider the elements $\tilde g_i=[\sigma_i,\lambda^{(i)}]\in G(r,p,n)$. Since $\lambda^{(i)}$ is $g_i$-compatible we have that $\tilde g_i$ is a lifting of $g_i$. Then the $j$-th row of the matrix $\tilde g_1\cdots \tilde g_k$ is
\begin{eqnarray}\label{sdf}
e_j(\tilde g_1\cdots \tilde g_k)&=&e_j([\sigma_1;\lambda^{(1)}]\cdots [\sigma_k;\lambda^{(k)}])\\
\nonumber &=&\zeta_r^{\lambda^{(1)}_j}e_{\sigma_1(j)}([\sigma_2;\lambda^{(2)}]\cdots [\sigma_k;\lambda^{(k)}])\\
\nonumber&=&\zeta_r^{\lambda^{(1)}_j+\lambda^{(2)}_{\sigma_1(j)}}e_{(\sigma_1\sigma_2)(j)}([\sigma_3;\lambda^{(3)}]\cdots [\sigma_k;\lambda^{(k)}])\\
\nonumber&=&\zeta_r^{\lambda^{(1)}_j+\lambda^{(2)}_{\sigma_1(j)}+\cdots+\lambda^{(k)}_{\sigma_1\cdots\sigma_{k-1}(j)}}e_j.
\end{eqnarray}
Then, since $g_1\cdots g_k=1$ in $G^*$, we deduce that the exponents of $\zeta_r$ (as $j$ varies in $[n]$) appearing in the previous formula must be all congruent to the same multiple of $r/p$ modulo $r$. Since these exponents are exactly the column sums of the matrix $\Phi(g_1,\ldots,g_k;\lk)$  we conclude that the map $\Phi$ is well-defined.\\
Let's prove that $\Phi$ is surjective. If $A\in \mathcal B_k(r,p,q,n)$ we know that there exist (unique) $(\sigma_1,\ldots,\sigma_k;\lk)$, where $\lambda^{(i)}$ is $\sigma_i$-compatible for all $i$ such that \eqref{aly} is satisfied. We let $g_i=[\sigma_i,\lambda^{(i)}]$. We have to show that $\lambda^{(i)}$ is $g_i$-compatible and for this we only have to verify that $\lambda^{(i)}-\lambda(g_i)$  is a partition. So we have to show that
\begin{equation}\label{hty}
\lambda^{(i)}_j-\lambda^{(i)}_{j+1}\geq \lambda_j(g_i)-\lambda_{j+1}(g_i), 
\end{equation}
for all $j$. 
We observe that the two sides of \eqref{hty} are always congruent modulo $r$. If $j\in \HDes(g_i)$ the right-hand side of \eqref{hty} is $r$. The left-hand side is at least 1 since $\lambda^{(i)}$ is $\sigma_i$-compatible and the result follows.\\
If $j\notin \HDes(g_i)$ then the right-hand side is always in $[0,r-1]$ and the result again follows. To prove that $g_1\cdots g_k=1$ it is enough to read equation \eqref{sdf} backwards.\\ 
Finally we have to prove that $\Phi$ is injective. This is clear from the corresponding fact for the symmetric group and the definition of a $g$-compatible partition.
\end{proof}
We denote by $\Par(r,p,n)$ the set of partitions of length at most $n$ whose parts are all congruent to the same multiple of $r/p$ modulo $r$ and we observe that, if $g\in G^*$, then 
$$
\lambda \textrm{ is $g$-compatible if and only if }\lambda-\lambda(g)\in \Par(r,p,n).
$$
If $g_1,\ldots,g_k\in G^*$ and $g_1\cdots g_k=1$ we let 
$$A(g_1,\ldots,g_k)\eqdef \Phi(g_1,\ldots,g_k;\lambda(g_1),\ldots,\lambda(g_k)).$$

\begin{thm}\label{main}
The set of polynomials
$$
\{(\mathcal X^{A(g_1,\ldots,g_k)})^\# :\,g_1,\ldots,g_k\in G^* \textrm{ and }g_1\cdots g_k=1\},
$$
is a basis for $S_q[\xk]^{\Delta G}$ as a free module over
$S_q[\xk]^{G^k}$ or equivalently for the $\mathbb C$-vector space $(S_q[\xk]/I_+^{G^k})^{\Delta G}$, where $I_+^{G_k}$ is the ideal generated by the non constant homogeneous $G^k$-invariants.
\end{thm}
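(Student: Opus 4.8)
The plan is to derive the statement from the monomial basis $\{(\mathcal X^{A})^{\#}:A\in\mathcal B_k(r,p,q,n)\}$ of the diagonal invariant algebra (the Corollary after Lemma~\ref{colu}), the bijection $\Phi$ of Theorem~\ref{bije}, and a triangularity argument with respect to the multipartition degree.

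First I would clear away the bookkeeping behind the ``or equivalently''. Taking $\Delta G$-invariants is exact in characteristic $0$, so the exact sequence $0\to I_+^{G^k}\to S_q[\xk]\to S_q[\xk]/I_+^{G^k}\to0$ gives $(S_q[\xk]/I_+^{G^k})^{\Delta G}\cong S_q[\xk]^{\Delta G}/(I_+^{G^k}\cap S_q[\xk]^{\Delta G})$; and writing $f\in I_+^{G^k}$ as $\sum_\alpha p_\alpha m_\alpha$ with $p_\alpha\in S_q[\xk]$ and $m_\alpha\in S_q[\xk]^{G^k}_+$, applying the $\Delta G$-Reynolds operator and pulling the (already $\Delta G$-fixed) $m_\alpha$ out shows $I_+^{G^k}\cap S_q[\xk]^{\Delta G}=S_q[\xk]^{G^k}_+\cdot S_q[\xk]^{\Delta G}$. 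Hence, since $S_q[\xk]^{\Delta G}$ is free over the polynomial ring $S_q[\xk]^{G^k}$, a set of homogeneous elements is a basis over $S_q[\xk]^{G^k}$ iff its image spans $(S_q[\xk]/I_+^{G^k})^{\Delta G}$, and by graded Nakayama it suffices to exhibit a spanning set of the right cardinality. The cardinality of the proposed set is $|G^*|^{k-1}=|G|^{k-1}$ (the last factor $g_k$ is forced, and distinct tuples give distinct matrices $A(g_1,\dots,g_k)$ by injectivity of $\Phi$, hence distinct nonzero vectors of the $\mathcal B_k$-basis), while $S_q[\xk]/I_+^{G^k}=(R^G)^{\otimes k}\cong\mathbb C[G^k]$ as $G^k$-modules by Proposition~\ref{reg}, so $(S_q[\xk]/I_+^{G^k})^{\Delta G}$ has dimension equal to the number of $\Delta G$-orbits on $G^k$, namely $|G|^{k-1}$. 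So it remains only to prove spanning.

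To span, it is enough to express each $(\mathcal X^{A})^{\#}$, $A\in\mathcal B_k(r,p,q,n)$, modulo $I_+^{G^k}$ as a combination of the proposed generators. Decompose $A=\Phi(g_1,\dots,g_k;\lambda(g_1)+\mu^{(1)},\dots,\lambda(g_k)+\mu^{(k)})$ by Theorem~\ref{bije}, so each $\mu^{(i)}\in\Par(r,p,n)$; since $\fmaj(g_i)\equiv 0\bmod q$ for $g_i\in G^*$ and the $i$-th block degree of $\mathcal X^{A}$ is a multiple of $q$, also $|\mu^{(i)}|\equiv 0\bmod q$, so the monomial symmetric polynomial $m_{\mu^{(i)}}$ in the $i$-th block of variables lies in $S_q[\xk]^{G^k}$. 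If all $\mu^{(i)}=\emptyset$ then $A=A(g_1,\dots,g_k)$ and there is nothing to do; otherwise $\prod_i m_{\mu^{(i)}}$ is a non-constant $G^k$-invariant, so
$$
P\;:=\;\Bigl(\textstyle\prod_{i}m_{\mu^{(i)}}\Bigr)\cdot(\mathcal X^{A(g_1,\dots,g_k)})^{\#}\;\in\;S_q[\xk]^{G^k}_+\cdot S_q[\xk]^{\Delta G}.
$$
The $\#$-operator only permutes the columns of the matrix $A(g_1,\dots,g_k)$ (up to roots of unity) and $m_{\mu^{(i)}}$ is homogeneous of partition degree $\mu^{(i)}$ in block $i$, so by the rearrangement inequality every monomial of $P$ has multipartition degree $\unlhd\Lambda(A)=(\lambda(g_1)+\mu^{(1)},\dots,\lambda(g_k)+\mu^{(k)})$; expanding $P$ in the $\mathcal B_k$-basis gives $P=\sum_{B:\,\Lambda(B)\unlhd\Lambda(A)}c_B(\mathcal X^{B})^{\#}$. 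One checks that $c_A\neq0$ (this is the positivity already used in the proof of Lemma~\ref{colu}), whence, modulo $I_+^{G^k}$,
$$
(\mathcal X^{A})^{\#}\;\equiv\;-\,c_A^{-1}\!\!\sum_{\substack{B\neq A\\ \Lambda(B)\unlhd\Lambda(A)}}\!\!c_B(\mathcal X^{B})^{\#}.
$$
The summands with $\Lambda(B)\lhd\Lambda(A)$ are handled by induction on the dominance order. For the summands with $\Lambda(B)=\Lambda(A)$ but $B\neq A$, note that $B$ is a $k$-partite partition with the same sorted rows as $A$, so $B=\Phi(\tilde g_1,\dots,\tilde g_k;\lambda(g_1)+\mu^{(1)},\dots)$ where $\tilde g_i=[\tilde\sigma_i;\lambda(g_i)+\mu^{(i)}]$ has the same colours as $g_i$; hence the statistics $k_i$ are unchanged, $\mu^{(i)}$ is replaced by $\mu^{(i)}-r\bigl(h(\tilde g_i)-h(g_i)\bigr)$, and one shows $\sum_i|\mu^{(i)}_B|<\sum_i|\mu^{(i)}|$. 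An induction on the pair $\bigl(\Lambda(A),\sum_i|\mu^{(i)}|\bigr)$ then completes the proof.

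The hard part is precisely this last claim: that every $k$-partite partition $B\neq A$ with $\Lambda(B)=\Lambda(A)$ occurring in $P$ has strictly smaller total $\mu$-size, equivalently that its permutation-part satisfies $\sum_i|h(\tilde g_i)|>\sum_i|h(g_i)|$. This is a statement about those column-reorderings of a $k$-partite partition that preserve the multipartition degree, and making it precise requires a careful extension of the computations behind Theorem~\ref{bije} --- notably the fact that reversing columns in positions where all higher rows are constant forces a full period $r$ into the $\mu$-part, since the parts of a $\mu^{(i)}\in\Par(r,p,n)$ are congruent modulo $r$. Should this bookkeeping prove unwieldy, an alternative is to argue through Hilbert series: establish $\mathrm{Hilb}\bigl(S_q[\xk]^{\Delta G}\bigr)=\prod_{i}\mathrm{Hilb}\bigl(S_q[X]^{G}\bigr)\cdot\sum_{g_1\cdots g_k=1}\prod_i t_i^{\fmaj(g_i)}$ by a Molien-type computation and combine it with linear independence of the proposed elements over $S_q[\xk]^{G^k}$, following the reduction-to-$q=1$-then-$p=1$ pattern used for Theorem~\ref{coba} and Proposition~\ref{dimirrep}.
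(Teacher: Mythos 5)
Your strategy is the same as the paper's proof of Theorem~\ref{main}: decompose an arbitrary $A\in\mathcal B_k(r,p,q,n)$ via Theorem~\ref{bije} as $A(g_1,\ldots,g_k)$ plus complementary partitions $\mu^{(i)}\in\Par(r,p,n)$, multiply the candidate generator $(\mathcal X^{A(g_1,\ldots,g_k)})^{\#}$ by the monomial symmetric functions $m_{\mu^{(i)}}(X_i)$, argue triangularity with respect to the dominance order on multipartition degrees, and close with the count $|G|^{k-1}$ (your orbit-counting computation is the paper's Lemma~\ref{card}) together with freeness over $S_q[\xk]^{G^k}$. Two small remarks on the shared part: your justification that $m_{\mu^{(i)}}\in S_q[\xk]^{G^k}$ only addresses the degree-divisibility by $q$; the $G$-invariance itself is not automatic and needs the short computation that $g(X^{\lambda})=\zeta_r^{(tr/p)(c_1+\cdots+c_n)}\,\sigma(X^{\lambda})$ for $\lambda\in\Par(r,p,n)$ and $\sum c_i\equiv 0\bmod p$, which the paper spells out inside the proof. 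The reduction of the ``or equivalently'' statement and the Nakayama-plus-cardinality bookkeeping are fine.

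The genuine gap is exactly where you say it is: the claim that every $B\neq A$ with $\Lambda(B)=\Lambda(A)$ occurring in your $P$ has strictly smaller $\sum_i|\mu^{(i)}|$ (equivalently strictly larger $\sum_i\fmaj$) is asserted, declared ``the hard part'', and never proved; the Hilbert-series fallback is likewise only gestured at. This is the load-bearing step: without it (or some other argument showing that the block of the transition matrix between $\{P_B\}$ and $\{(\mathcal X^{B'})^{\#}\}$ within a fixed multipartition degree is invertible), the induction on the pair $(\Lambda(A),\sum_i|\mu^{(i)}|)$ does not close and spanning is not established, so the proposal as written does not prove the theorem. For comparison, the paper at this point simply asserts, in equation~\eqref{straight}, that the remainder has strictly smaller multipartition degree, so its induction is a plain dominance induction with no secondary statistic. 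Your worry about ties is in fact substantive: already for $G=S_3$, $k=2$, $g_1=g_2=1$, $\mu^{(1)}=(1,1,0)$, $\mu^{(2)}=(1,0,0)$ one finds $m_{(1,1,0)}(X_1)\,m_{(1,0,0)}(X_2)=6(\mathcal X^{A})^{\#}+3(\mathcal X^{B})^{\#}$ with $A=\left(\begin{smallmatrix}1&1&0\\1&0&0\end{smallmatrix}\right)$, $B=\left(\begin{smallmatrix}1&1&0\\0&0&1\end{smallmatrix}\right)$, both of the same top multipartition degree (here $B=A(g_1,g_2)$ for a nontrivial pair, consistent with your claim, which in this instance has $\sum_i|\mu^{(i)}_B|=0$). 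So equal-degree terms genuinely occur and must be handled by some refinement of the straightening; your proposed refinement is plausible, but as submitted it is a conjecture, not a proof, and that is precisely the content that distinguishes a complete argument from the outline you have given.
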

\begin{proof}
We observe that Theorem \ref{bije} can be restated as follows: if $A\in \mathcal B_k(r,p,q,n)$ then there exist unique $g_1,\ldots,g_k\in  G^*$ with $g_1\cdots g_k=1$ such that 
$$
\mathcal X^A=\mathcal X^{A(g_1,\ldots g_k)}M_1(X_1)\cdots M_k(X_k),
$$
where, for all $i\in [k]$, $M_i$ is a monomial such that
$\lambda(M_i)=\lambda^{(i)}(\mathcal X^A)-\lambda(g_i)$ and
$\lambda(M_i)\in \Par(r,p,n)$. Recall that $\lambda^{(i)}(\mathcal X^A)$ is the
exponent partition of $\mathcal X^A$ with respect to the variables
$x_{i,1},\ldots,x_{i,n}$ (i.e. the partition obtained by reordering
the $i$-th row of $A$). Then with a moment's thought one can deduce that the polynomial $\mathcal
X^{A(g_1,\ldots,g_k)}m_{\lambda(M_1)}(X_1)\cdots
m_{\lambda(M_k)}(X_k)$, where the $m_\lambda$'s are the monomial symmetric functions, is equal to  $\mathcal X^A$ plus a sum of
monomials having multipartition degree strictly smaller than
$\Lambda(\mathcal X^A)$ with respect to the $k$-th cartesian product of the dominance order on partitions.  Now we observe that if
$\lambda\in\Par(r,p,n)$ then $m_\lambda$ is $G$-invariant. For this it is enough to prove that $g(X^{\lambda})=|g|(X^{\lambda})$ for all $g\in G$. By definition of $\Par(r,p,n)$ we know that, for all $i$, $\lambda_i\equiv kr/p \mod r$, for some integer $k$. Then, if $g=[\sigma;c_1,\ldots,c_n]$ we have
$$
g(X^\lambda)=\zeta_r^{c_1\lambda_1+\cdots+c_n\lambda_n}\sigma(X^{\lambda})=\zeta_r^{kr/p(c_1+\cdots+c_n)}\sigma(X^{\lambda})
$$ and the claim follows since $c_1+\cdots+c_n\equiv 0$ $\mod p$. In particular we may conclude that
\begin{equation}\label{straight}
(\mathcal X^A)^\#=(\mathcal X^{A(g_1,\ldots,g_k)})^\#m_{\lambda(M_1)}(X_1)\cdots m_{\lambda(M_k)}(X_k)+F,
\end{equation}
where $\lambda^{(i)}(\mathcal X^A)=\lambda(g_i)+\lambda(M_i)$ and $F\in S_q[\xk]^{\Delta G}_{\lhd \Lambda(\mathcal
  X^A)}$. This implies that the polynomials $(\mathcal
  X^{A(g_1,\ldots,g_k)})^\#$ span the diagonal invariant algebra $S_q[\xk]^{\Delta G}$ as a free module over $S_q[\xk]^{G^k}$.  By Lemma \ref{card} these polynomials have the right cardinality and hence they are a basis.
\end{proof}
\begin{lem}\label{card}
We have
$$
\dim ((S_q[\xk]/I_+^{G^k})^{\Delta G})=|G|^{k-1}
$$
\end{lem}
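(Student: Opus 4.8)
The plan is to identify the quotient $S_q[\xk]/I_+^{G^k}$ with the $k$-fold tensor power of the coinvariant algebra $R^G$, and then to conclude with Proposition~\ref{reg} together with a one-line character computation.

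First I would note that, since the $i$-th copy of $G$ inside $G^k$ acts only on the variables $X_i$, the tensorial invariants factor:
$$
S_q[\xk]^{G^k}=S_q[X_1]^{G}\otimes\cdots\otimes S_q[X_k]^{G}.
$$
By Theorem~\ref{charac}, each $S_q[X_i]^{G}$ is a polynomial algebra on $n$ homogeneous generators, say $\theta^{(i)}_1,\dots,\theta^{(i)}_n$; hence $S_q[\xk]^{G^k}$ is a polynomial algebra on the $kn$ elements $\theta^{(i)}_j$, and the ideal $I_+^{G^k}$ of $S_q[\xk]$ generated by the non-constant homogeneous $G^k$-invariants is exactly the ideal generated by all the $\theta^{(i)}_j$. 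Splitting this generating set according to the $k$ tensor slots and using that tensoring over $\mc$ is exact, one obtains a $G^k$-equivariant isomorphism
$$
S_q[\xk]/I_+^{G^k}\;\cong\;\bigotimes_{i=1}^{k}\bigl(S_q[X_i]/I_+^{G}\bigr)\;=\;(R^G)^{\otimes k}.
$$

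Next I would restrict the $G^k$-action to the diagonal subgroup $\Delta G\cong G$ and invoke Proposition~\ref{reg}, which gives $R^G\cong\mc G$ as $G$-modules and therefore $(R^G)^{\otimes k}\cong(\mc G)^{\otimes k}$ as $\Delta G$-modules. It then remains to compute $\dim\bigl((\mc G)^{\otimes k}\bigr)^{\Delta G}$, which equals $\frac1{|G|}\sum_{g\in G}\chi(g)$, where $\chi$ is the character of the diagonal action on $(\mc G)^{\otimes k}$. Since the regular character of $G$ vanishes away from the identity and takes the value $|G|$ at the identity, we get $\chi(g)=\chi_{\mc G}(g)^{k}=|G|^{k}$ if $g=1$ and $0$ otherwise, so $\dim\bigl((\mc G)^{\otimes k}\bigr)^{\Delta G}=\frac1{|G|}\cdot|G|^{k}=|G|^{k-1}$, as claimed.

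The only step requiring genuine care is the first identification $S_q[\xk]/I_+^{G^k}\cong(R^G)^{\otimes k}$: this is precisely where the polynomiality of $S_q[X_i]^{G}$ coming from Theorem~\ref{charac} enters, ensuring that $I_+^{G^k}$ is generated by homogeneous elements that distribute cleanly across the tensor factors. Once that is in place, the remainder is routine finite-group representation theory, entirely parallel to the well-known computation for $k$-fold tensor powers of the regular representation.
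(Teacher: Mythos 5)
Your proof is correct and takes essentially the same route as the paper: identify $S_q[\xk]/I_+^{G^k}$ with $(\mc G)^{\otimes k}$ as a $\Delta G$-module via Proposition~\ref{reg} and then compute the dimension of the invariant subspace by averaging the character of the $k$-th tensor power of the regular representation. The only difference is that you spell out the tensor-factorization $S_q[\xk]/I_+^{G^k}\cong (R^G)^{\otimes k}$, which the paper simply takes for granted.
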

\begin{proof}
We recall that $S_q[\xk]/I_+^{G^k}\cong \mc G^{\otimes k}$ as
$\Delta G$-modules, by Proposition \ref{reg}. So, if we let $\chi$ be the character of $\mc G^{\otimes k}$ we have $\chi(g)=0$ if $g\neq1$ and $\chi(1)=|G^k|$. So
\begin{eqnarray*}
\dim (\mc G^{\otimes k})^{\Delta G} & = & \frac{1}{|G|}\sum_{g\in G}\chi(g)\\ &=&|G|^{k-1}.
\end{eqnarray*}
\end{proof}
An immediate consequence of Theorem \ref{main} is the following equality
$$
\frac{\Hilb(S_q[\xk]^{\Delta G})(y_1,\ldots,y_k)}{\Hilb(S_q[\xk]^{G^k})(y_1,\ldots,y_k)}=\sum_{\substack{g_1,\ldots,g_k\in G^*:\\g_1\cdots g_k=1}}y_1^{\fmaj(g_1)}\cdots y_k^{\fmaj(g_k)},
$$
where the Hilbert series are considered with respect to the total multidegree.
Theorem \ref{main} and its proof allow us to obtain an important
refinement of the previous identity, considering the multipartition
degree instead of the total multidegree. 

\begin{cor}\label{uou}
We have
\begin {eqnarray*}
\frac{\Hilb(S_q[\xk]^{\Delta G})(Y_1,\ldots,Y_k)}{\Hilb(S_q[\xk]^{G^k})(Y_1,\ldots,Y_k)} &=&\sum_{\substack{g_1,\ldots,g_k\in G^*:\\g_1\cdots g_k=1}}Y_1^{\lambda(g_1)}\cdots Y_k^{\lambda(g_k)},
\end {eqnarray*}
where $Y_i=(y_{i,1},\ldots,y_{i,n})$ and the Hilbert series are
considered with respect to the multipartition degree.
\end{cor}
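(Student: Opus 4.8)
The plan is to derive Corollary~\ref{uou} as a graded refinement of Theorem~\ref{main}, keeping track of multipartition degrees. The coarse identity stated just above is immediate from Theorem~\ref{main} because total degree is an honest grading (multiplication adds degrees); the subtlety of the refinement is that multipartition degree is \emph{not} preserved additively by multiplication, so a little extra care is needed. First I would observe that $S_q[\xk]^{\Delta G}$ is nonetheless multigraded by multipartition degree: the basis $\{(\mathcal{X}^A)^\#:A\in\mathcal{B}_k(r,p,q,n)\}$ (the corollary to Lemma~\ref{colu}) consists of homogeneous elements, since an element of $\Delta G$ acts on $\mathcal{X}^A$ by simultaneously permuting the columns of $A$ and scaling by a root of unity, and neither operation changes the rearrangement $\Lambda(\mathcal{X}^A)$ of the rows of $A$. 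Consequently, for any $k$-tuple of partitions $\Lambda=(\lk)$ one has
$$
\dim S_q[\xk]^{\Delta G}_{\Lambda}=\#\{A\in\mathcal{B}_k(r,p,q,n):\ \Lambda(\mathcal{X}^A)=\Lambda\}.
$$

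The second step is to count these matrices via the bijection $\Phi$ of Theorem~\ref{bije}. Since $\Phi(g_1,\ldots,g_k;\lk)$ arises from $\lambda^{(1)},\ldots,\lambda^{(k)}$ by permuting the entries within each row, its $i$-th row rearranges back to $\lambda^{(i)}$; hence the $A$'s of multipartition degree $\Lambda$ are exactly the images under $\Phi$ of the tuples $(g_1,\ldots,g_k;\lk)$ with $g_i\in G^{*}$, $g_1\cdots g_k=1$, and $\lambda^{(i)}$ being $g_i$-compatible for all $i$. Summing $Y_1^{\lambda^{(1)}}\cdots Y_k^{\lambda^{(k)}}$ over all such data and using the recorded equivalence that, for $g\in G^{*}$, $\lambda$ is $g$-compatible iff $\lambda-\lambda(g)\in\Par(r,p,n)$, I would set $\nu^{(i)}=\lambda^{(i)}-\lambda(g_i)$, with each $\nu^{(i)}$ now ranging freely over $\Par(r,p,n)$ while $g_1,\ldots,g_k$ range over $G^{*}$ with product $1$. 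This factors the sum as
$$
\Hilb\big(S_q[\xk]^{\Delta G}\big)(Y_1,\ldots,Y_k)=\Bigg(\sum_{\substack{g_1,\ldots,g_k\in G^{*}\\ g_1\cdots g_k=1}}Y_1^{\lambda(g_1)}\cdots Y_k^{\lambda(g_k)}\Bigg)\cdot\prod_{i=1}^{k}\Bigg(\sum_{\nu\in\Par(r,p,n)}Y_i^{\nu}\Bigg).
$$
To finish I would identify the rightmost product with $\Hilb(S_q[\xk]^{G^k})$: since $S_q[\xk]^{G^k}=S_q[X_1]^G\otimes\cdots\otimes S_q[X_k]^G$ and $S_q[X]^G=S[X]^{G(r,p,n)}$ has the classical monomial symmetric function basis $\{m_{\nu}:\nu\in\Par(r,p,n)\}$ (invariance of these $m_\nu$ is shown inside the proof of Theorem~\ref{main}, and $pq\mid rn$ forces $q\mid|\nu|$ for every $\nu\in\Par(r,p,n)$, so indeed $m_\nu\in S_q[X]$), each $m_{\nu^{(1)}}(X_1)\cdots m_{\nu^{(k)}}(X_k)$ is homogeneous of multipartition degree $(\nu^{(1)},\ldots,\nu^{(k)})$, whence $\Hilb(S_q[\xk]^{G^k})(Y)=\prod_i\sum_{\nu\in\Par(r,p,n)}Y_i^{\nu}$. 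Dividing both sides of the displayed identity by this quantity gives the statement.

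The main obstacle is exactly that the free-module decomposition of Theorem~\ref{main} is not literally multigraded: multiplying a generator $(\mathcal{X}^{A(g_1,\ldots,g_k)})^\#$ by a homogeneous invariant shifts multipartition degree only up to strictly smaller terms in the dominance order --- this is precisely the content of equation~\eqref{straight} --- so one cannot simply read the multigraded Hilbert series off the freeness statement. The device above sidesteps this by reducing everything to a count of the homogeneous basis $\{(\mathcal{X}^A)^\#\}$ through Theorem~\ref{bije}; equivalently one may say that the associated graded of the filtered isomorphism of Theorem~\ref{main} is a genuine multigraded isomorphism, equation~\eqref{straight} supplying the required triangularity. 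Specializing each $Y_i$ to a single variable recovers the coarse identity, with $Y_i^{\lambda(g_i)}\mapsto y_i^{\fmaj(g_i)}$, as a consistency check.
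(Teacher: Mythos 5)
Your proof is correct, but it reaches the identity by a genuinely different route than the paper. The paper deduces the corollary from equation \eqref{straight} in the proof of Theorem \ref{main}: the triangular expansion of each $(\mathcal X^A)^\#$ over the free-module generators gives $\dim S_q[\xk]^{\Delta G}_{\unlhd\Lambda}$ as a sum of $\dim S_q[\xk]^{G^k}_{\Lambda'}$ over $\Lambda'\in\Par(r,p,n)^k$ with $\Lambda'+\Lambda(g_1,\ldots,g_k)\unlhd\Lambda$, then differences the $\unlhd$ and $\lhd$ filtrations to isolate each multipartition-homogeneous piece, and finally resums using the fact (which it justifies via the generators $e_j(x_1^r,\ldots,x_n^r)$ and $(x_1\cdots x_n)^{r/p}$) that a nonzero $G$-invariant of partition degree $\lambda$ forces $\lambda\in\Par(r,p,n)$. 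You bypass Theorem \ref{main}, \eqref{straight} and the dominance filtration entirely: you note that the basis $\{(\mathcal X^A)^\#:A\in\mathcal B_k(r,p,q,n)\}$ is homogeneous for the multipartition degree, enumerate it through the bijection $\Phi$ of Theorem \ref{bije} (whose $i$-th row rearranges to $\lambda^{(i)}$), split each $g_i$-compatible partition as $\lambda(g_i)+\nu^{(i)}$ with $\nu^{(i)}\in\Par(r,p,n)$, and identify $\prod_i\sum_{\nu\in\Par(r,p,n)}Y_i^{\nu}$ with $\Hilb(S_q[\xk]^{G^k})$ via the $m_\nu$-basis of $S_q[X]^G=\mc[X]^{G(r,p,n)}$. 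The two arguments share the essential inputs (Theorem \ref{bije} and the $\Par(r,p,n)$ description of the invariants), but yours is a purely enumerative vector-space computation, more elementary and independent of the freeness statement, with the ratio then read formally since the denominator has constant term $1$; the paper's route instead exhibits the identity as the graded shadow of the explicit free-module basis of Theorem \ref{main}, which is the structural content the section is after, and your closing remark about the associated graded of \eqref{straight} is precisely that mechanism. The only point you state without proof is that $\{m_\nu:\nu\in\Par(r,p,n)\}$ \emph{spans} $\mc[X]^{G(r,p,n)}$ (equivalently, that invariants are supported on monomials with exponent partition in $\Par(r,p,n)$); this is the same fact the paper invokes at the end of its proof and is easily checked (an invariant is symmetric and supported on monomials fixed by the diagonal subgroup $\{[1;c_1,\ldots,c_n]:\sum c_i\equiv 0 \bmod p\}$), so it is a minor omission rather than a gap.
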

\begin{proof}
Equation \eqref{straight} implies that 
$$\dim S_q[\xk]^{\Delta G}_{\unlhd \Lambda}= \sum_{\substack{g_1,\ldots,g_k\in G^*:\\g_1\cdots g_k=1}}\sum_{\substack{\Lambda'\in \Par(r,p,n)^k:\\ \Lambda'+\Lambda(g_1,\ldots,g_k)\unlhd \Lambda}}\dim S_q[\xk]^{G^k}_{\Lambda'},$$
and similarly with $\lhd$ instead of $\unlhd$. Therefore
\begin{eqnarray*}
\dim S_q[\xk]^{\Delta G}_{\Lambda} &=& \dim S_q[\xk]^{\Delta G}_{\unlhd \Lambda}-\dim S_q[\xk]^{\Delta G}_{\lhd\Lambda}\\
& = & \sum_{\substack{g_1,\ldots,g_k\in G^*:\\g_1\cdots g_k=1}}\sum_{\substack{\Lambda'\in \Par(r,p,n)^k:\\ \Lambda'+\Lambda(g_1,\ldots,g_k)= \Lambda}}\dim S_q[\xk]^{G^k}_{\Lambda'}.
\end{eqnarray*}
Note that in the last sum there is only one summand corresponding to $\Lambda'=\Lambda-\Lambda(g_1,\ldots,g_k)$ if this is an element in $\Par(r,p,n)^k$, and there are no summands otherwise. For notational convenience, if $\Lambda=(\lk)$ is a multipartition, we denote by $\mathcal Y^\Lambda\eqdef Y_1^{\lambda^{(1)}}\cdots Y_k^{\lambda^{(k)}}=\prod_{i,j}y_{i,j}^{\lambda^{(i)}_j}$. So we have
\begin{eqnarray*}
\Hilb(S_q[\xk]^{\Delta G})& = & \sum_{\Lambda}\dim S_q[\xk]^{\Delta G}_{\Lambda} \mathcal Y^\Lambda\\
& = &  \sum_{\Lambda}\sum_{\substack{g_1,\ldots,g_k\in G^*:\\g_1\cdots g_k=1}}\sum_{\substack{\Lambda'\in \Par(r,p,n)^k:\\ \Lambda'+\Lambda(g_1,\ldots,g_k)= \Lambda}}\dim S_q[\xk]^{G^k}_{\Lambda'}\mathcal Y^{\Lambda}\\
&=&\sum_{\substack{g_1,\ldots,g_k\in G^*:\\g_1\cdots g_k=1}}\sum_{\Lambda'\in \Par(r,p,n)^k}\dim S_q[\xk]^{G^k}_{\Lambda'} \mathcal Y^{\Lambda'+\Lambda(g_1,\ldots,g_k)}\\
&=&\sum_{\substack{g_1,\ldots,g_k\in G^*:\\g_1\cdots g_k=1}}Y_1^{\lambda(g_1)}\cdots Y_k^{\lambda(g_k)}\sum_{\Lambda'\in \Par(r,p,n)^k}\dim S_q[\xk]^{G^k}_{\Lambda'}\mathcal Y^{\Lambda'}\\
&=&\sum_{\substack{g_1,\ldots,g_k\in G^*:\\g_1\cdots g_k=1}} Y_1^{\lambda(g_1)}\cdots Y_k^{\lambda(g_k)} \Hilb (S_q[\xk]^{G^k}).
\end{eqnarray*}
In the last equality we have used the fact that if $F\in
S_q[X]_\lambda$ is a nonzero $G$-invariant polynomial then $\lambda\in
\Par(r,p,n)$. This follows from the fact that the invariants of $\qc$
on $S_q[X]$ coincide with the invariants of $G(r,p,n)$ on $\mc[X]$, and the invariants of $G(r,p,n)$ form a polynomial algebra generated by the $n-1$ independent homogeneous polynomial $e_j(x_1^r,\ldots,x_n^r)$ for $j=1,\ldots,n-1$ together with the monomial $(x_1\cdots x_n)^{r/p}$.
\end{proof}
\section{Kronecker coefficients}\label{krocoe}
We can use the descent representations of a projective reflection
group $G=\qc$ introduced in \S\ref{desrep} and the isomorphism
$$
R^G\cong \bigoplus_\lambda R^G_\lambda
$$
to give to the coinvariant algebra the structure of a partition-graded module. By means of this grading of the coinvariant algebra we can also decompose the algebra
 $$\frac{S_q[\xk]}{I^{G^k}_+}\cong \underbrace{R^G\otimes\cdots\otimes R^G}_k$$ and its diagonal invariant subalgebra
$$
\left( \frac{S_q[\xk]}{I^{G^k}_+}\right)^{\Delta G}\cong \frac{S_q[\xk]^{\Delta G}}{J^{G^k}_+}
$$
in homogeneous components whose degrees are $k$-tuples of partitions with at most $n$ parts. Here  $J^{G^k}_+$ is the ideal generated by homogeneous $G^k$-invariant polynomials of positive degree inside  $S_q[\xk]^{\Delta G}$.
The last isomorphism is due to the fact that the natural map
$$
S_q[\xk]^{\Delta G}/J^{G^k}_+ \rightarrow (S_q[\xk]/I^{G^k}_+)^{\Delta G}
$$
is an isomorphism since the averaging operator $F\mapsto F^\#$ provides an inverse.
In particular we say that an element in $S_q[\xk]/I^{G^k}_+$ is homogeneous of \emph{multipartition degree} $(\lk)$ if it belongs to
$R^G_{\lambda^{(1)}}\otimes \cdots \otimes R^G_{\lambda^{(k)}}$ by means of the above mentioned isomorphism. 

We define the \emph{refined fake degree polynomial} $f^{\phi} (y_1,\ldots, y_n)$  of  $G$ as the polynomial whose coefficient of $y_1^{\lambda_1}\cdots y_n^{\lambda_n}$ is the multiplicity of the irreducible representation $\phi$ of $G$ in $R^G_\lambda$. If $\phi_1,\ldots,\phi_k$ are $k$ irreducible  representations of $G$ we define the \emph{Kronecker coefficients} of $G$ by
$$
g_{\phi_1,\ldots,\phi_k}\eqdef\frac{1}{|G|}\sum_{g\in G}\chi^{\phi_1}(g)\cdots \chi^{\phi_k}(g).
$$
One may note that the knowledge of the Kronecker coefficients is equivalent to the knowledge of the irreducible decomposition of the tensor product of $k-1$ irreducible representations of $G$.
If $G=G(r,p,q,n)$ and $\mu_1,\ldots, \mu_k\in \Fer^*=\Fer(r,q,p,n)$, we define the \emph{coarse Kronecker coefficients} of $G$ by
$$g_{\mu_1,\ldots, \mu_k}\eqdef \sum_{\substack{\phi_1,\ldots,\phi_k \in \Irr(G):\\ \mu(\phi_i)=\mu_i \forall i}}g_{\phi_1,\ldots,\phi_k},$$
where $\Irr(G)$ denotes the set of all irreducibel repesentations of $G$. 
Note that if $p=1$, for example for the wreath products $G(r,n)$, then the coarse Kronecker coefficients necessarily coincide with the standard ones.
The following result is a consequence of \cite[Theorem 5.11]{So} (see also \cite[Theorem 3.1]{Ca} for the corresponding result for the symmetric group).
\begin{thm}\label{fcgen}We have
$$
\Hilb\Big(\frac{S_q[\xk]^{\Delta G}}{ J^{G^k}_+}\Big)(Y_1,\ldots,Y_k)=\sum_{\phi_1,\ldots,\phi_k\in \Irr (G)}g_{\phi_1,\ldots,\phi_k}f^{\phi_1}(Y_1)\cdots f^{\phi_k}(Y_k)
$$
where the sum is taken over all $k$-tuples of irreducible representations of $G$.
\end{thm}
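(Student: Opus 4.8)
The plan is to combine the partition-graded structure of the coinvariant algebra established in \S\ref{desrep} with the isomorphisms recalled at the beginning of this section. By \S\ref{desrep} the decomposition $R^G\cong\bigoplus_\lambda R^G_\lambda$ holds as a graded $G$-module, and combining it with $S_q[\xk]/I^{G^k}_+\cong R^G\otimes\cdots\otimes R^G$ one identifies the multipartition-homogeneous component of degree $(\lk)$ of $(S_q[\xk]/I^{G^k}_+)^{\Delta G}\cong S_q[\xk]^{\Delta G}/J^{G^k}_+$ with $\big(R^G_{\lambda^{(1)}}\otimes\cdots\otimes R^G_{\lambda^{(k)}}\big)^{\Delta G}$. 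Hence
\[
\Hilb\Big(\frac{S_q[\xk]^{\Delta G}}{J^{G^k}_+}\Big)(Y_1,\ldots,Y_k)=\sum_{\lk}\dim\big(R^G_{\lambda^{(1)}}\otimes\cdots\otimes R^G_{\lambda^{(k)}}\big)^{\Delta G}\,Y_1^{\lambda^{(1)}}\cdots Y_k^{\lambda^{(k)}}.
\]

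The next step is purely character-theoretic. Writing $R^G_{\lambda^{(i)}}=\bigoplus_{\phi\in\Irr(G)}m_{\phi,\lambda^{(i)}}\,\phi$, the multiplicity $m_{\phi,\lambda^{(i)}}$ is, by the very definition of the refined fake degree polynomial, the coefficient of $Y_i^{\lambda^{(i)}}$ in $f^{\phi}(Y_i)$. Since the character of $\Delta G$ on $\phi_1\otimes\cdots\otimes\phi_k$ sends $g$ to $\chi^{\phi_1}(g)\cdots\chi^{\phi_k}(g)$, the dimension of its invariants is $\frac1{|G|}\sum_{g\in G}\chi^{\phi_1}(g)\cdots\chi^{\phi_k}(g)=g_{\phi_1,\ldots,\phi_k}$, so that
\[
\dim\big(R^G_{\lambda^{(1)}}\otimes\cdots\otimes R^G_{\lambda^{(k)}}\big)^{\Delta G}=\sum_{\phi_1,\ldots,\phi_k\in\Irr(G)}g_{\phi_1,\ldots,\phi_k}\,m_{\phi_1,\lambda^{(1)}}\cdots m_{\phi_k,\lambda^{(k)}}.
\]
Plugging this into the Hilbert series, interchanging the summations and factoring the sum over $(\lk)$ into a product of $k$ independent sums, each equal to $f^{\phi_i}(Y_i)=\sum_{\lambda^{(i)}}m_{\phi_i,\lambda^{(i)}}Y_i^{\lambda^{(i)}}$, produces exactly the asserted identity.

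The only real obstacle is the first step, namely verifying that the multipartition grading on $S_q[\xk]^{\Delta G}/J^{G^k}_+$ coincides degree by degree with the one induced by the tensor factors $R^G_{\lambda^{(1)}}\otimes\cdots\otimes R^G_{\lambda^{(k)}}$, so that taking $\Delta G$-invariants can be carried out componentwise; this compatibility, together with the fact that the averaging map $F\mapsto F^\#$ inverts the natural surjection onto the invariants, is precisely what is supplied by \cite[Theorem 5.11]{So} (compare \cite[Theorem 3.1]{Ca} in the symmetric group case). Everything beyond that point is a formal manipulation of generating functions, with no further input needed.
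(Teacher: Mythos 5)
Your argument is correct, but it does not follow the paper's route: in the paper Theorem \ref{fcgen} is not proved at all, it is simply quoted as a consequence of \cite[Theorem 5.11]{So} (with \cite[Theorem 3.1]{Ca} cited for the symmetric-group case). What you wrote is a self-contained proof: once one uses the definitions of \S\ref{krocoe} --- the multipartition degree on $S_q[\xk]^{\Delta G}/J^{G^k}_+$ is \emph{defined} by transporting the decomposition into pieces $R^G_{\lambda^{(1)}}\otimes\cdots\otimes R^G_{\lambda^{(k)}}$ through the isomorphisms $S_q[\xk]/I^{G^k}_+\cong (R^G)^{\otimes k}$ and $S_q[\xk]^{\Delta G}/J^{G^k}_+\cong (S_q[\xk]/I^{G^k}_+)^{\Delta G}$ --- the identity reduces, exactly as you say, to the facts that $\dim(\phi_1\otimes\cdots\otimes\phi_k)^{\Delta G}=\frac1{|G|}\sum_{g\in G}\chi^{\phi_1}(g)\cdots\chi^{\phi_k}(g)=g_{\phi_1,\ldots,\phi_k}$ (averaging the character is the trace of the projection onto invariants, so no conjugation is needed) and that $f^{\phi}(Y)=\sum_\lambda m_{\phi,\lambda}Y^{\lambda}$ where $R^G_\lambda=\bigoplus_\phi m_{\phi,\lambda}\,\phi$, followed by an interchange of summations. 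This is precisely the computation the paper itself carries out later in the proof of Theorem \ref{gsig} for the $\sigma$-twisted case with $k=2$; your proof is the untwisted $k$-fold version of it. The one inaccuracy is in your closing paragraph: the compatibility of the grading with taking $\Delta G$-invariants is not what \cite[Theorem 5.11]{So} supplies --- it holds by the paper's definition of the multipartition degree on the quotient, since each $R^G_{\lambda^{(i)}}$ is a $G$-submodule, so the decomposition is $\Delta G$-stable and invariants split componentwise, and the averaging operator $F\mapsto F^{\#}$ inverts the natural map as stated in \S\ref{krocoe}. So in fact no appeal to Solomon is needed anywhere in your argument: the citation buys the paper brevity, while your computation gives a direct verification valid verbatim in the projective setting.
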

By Proposition \ref{fake} we deduce that 
$$
f^{\phi}(Y)=\sum_{T\in \St_{\mu(\phi)}} Y^{\lambda(T)}.
$$
Then 
Theorem \ref{fcgen} can be restated as follows
\begin{equation}\label{ghj}
\Hilb\Big(\frac{S_q[\xk]^{\Delta G}}{ J^{G^k}_+}\Big)(Y_1,\ldots,Y_k)=\sum_{T_1,\ldots,T_k\in \St^*}g_{\mu(T_1),\ldots,\mu(T_k)}Y_1^{\lambda(T_1)}\cdots Y_k^{\lambda(T_k)},
 \end{equation}
where $\St^*\eqdef\St(r,q,p,n)$. 
The following result relates the multivariate descents statistics considered so far on the group $\qc$ with the corresponding statistics on tableaux in $\St(r,p,q,n)$, by means of the coarse Kronecker coefficients.
\begin{cor}\label{maincomb}
Let $G=G(r,p,q,n)$ and $\St=\St(r,p,q,n)$. Then
$$
\sum_{\substack{g_1,\ldots,g_k\in G\\g_1\cdots g_k=1}}Y_1^{\lambda(g_1)}\cdots Y_k^{\lambda(g_k)}=\sum_{T_1,\ldots,T_k\in \St}g_{\mu(T_1),\ldots,\mu(T_k)}Y_1^{\lambda(T_1)}\cdots Y_k^{\lambda(T_k)}.
$$ 
\end{cor}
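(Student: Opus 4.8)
The plan is to derive Corollary \ref{maincomb} by combining Corollary \ref{uou} with Theorem \ref{fcgen} in its restated form \eqref{ghj}, exploiting the duality between $G$ and $G^*$. First I would observe that the left-hand side of the identity to be proven is, up to the switch $G \leftrightarrow G^*$, exactly the sum appearing on the right-hand side of Corollary \ref{uou}: indeed Corollary \ref{uou} states
$$
\frac{\Hilb(S_q[\xk]^{\Delta G})(Y_1,\ldots,Y_k)}{\Hilb(S_q[\xk]^{G^k})(Y_1,\ldots,Y_k)} = \sum_{\substack{g_1,\ldots,g_k\in G^*:\\ g_1\cdots g_k=1}} Y_1^{\lambda(g_1)}\cdots Y_k^{\lambda(g_k)}.
$$
Applying this with $G$ replaced by $G^*$ (which is legitimate since $(G^*)^* = G$ and $G^*$ is again of the form $G(r,q,p,n)$), the right-hand side becomes the sum over $g_1,\ldots,g_k \in G$ with $g_1\cdots g_k = 1$, which is precisely the left-hand side of the statement.

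Next I would handle the right-hand side. The quantity $\Hilb\big(S_q[\xk]^{\Delta G}/J^{G^k}_+\big)$ is by definition $\Hilb(S_q[\xk]^{\Delta G})/\Hilb(S_q[\xk]^{G^k})$, since $S_q[\xk]^{\Delta G}$ is a free module over the tensorial invariant algebra $S_q[\xk]^{G^k}$ (as recalled before Theorem \ref{main}); this holds at the level of multipartition-graded Hilbert series because the generators of the free module and of $S_q[\xk]^{G^k}$ are homogeneous with respect to the multipartition grading. Hence the left-hand side of Corollary \ref{uou}, applied to $G^*$, equals $\Hilb\big(S_q[\xk]^{\Delta G^*}/J^{(G^*)^k}_+\big)(Y_1,\ldots,Y_k)$. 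Now I invoke \eqref{ghj}, which is Theorem \ref{fcgen} restated via Proposition \ref{fake}, again applied to $G^*$: there $\St^*$ denotes $\St(r,q,p,n)$, so when the group is $G^*$ the relevant tableau set is $\St(r,p,q,n) = \St$, and the coarse Kronecker coefficients of $G^*$ are indexed by $\Fer(r,p,q,n)$. This yields
$$
\Hilb\Big(\frac{S_q[\xk]^{\Delta G^*}}{J^{(G^*)^k}_+}\Big)(Y_1,\ldots,Y_k) = \sum_{T_1,\ldots,T_k\in \St} g_{\mu(T_1),\ldots,\mu(T_k)} Y_1^{\lambda(T_1)}\cdots Y_k^{\lambda(T_k)},
$$
where now $g_{\mu_1,\ldots,\mu_k}$ are the coarse Kronecker coefficients of $G$ (not of $G^*$) — this is the crucial bookkeeping point, since $\mu(\phi) \in \Fer^* = \Fer(r,q,p,n)$ for $\phi \in \Irr(G)$, so the coarse Kronecker coefficients of $G$ are naturally indexed by $\Fer(r,q,p,n)$, which is exactly the shape set $\Fer((G^*)^*{}^*\ldots)$ appearing when one applies \eqref{ghj} to $G^*$. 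Chaining the two displayed equalities gives the desired identity.

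The main obstacle I anticipate is precisely this duality bookkeeping: one must be careful that the various sets $\Fer(r,p,q,n)$, $\Fer(r,q,p,n)$, $\St(r,p,q,n)$, $\St(r,q,p,n)$ and the notations $\Fer^*$, $\St^*$ are tracked correctly when $G$ is replaced by $G^*$, so that the coarse Kronecker coefficients appearing on the right genuinely are those of $G$ and the tableaux genuinely live in $\St = \St(r,p,q,n)$. The only other thing to check is that Corollary \ref{uou}, Theorem \ref{fcgen}, and \eqref{ghj} are all valid for an arbitrary projective reflection group of the form $G(r,p,q,n)$ and hence for $G^*$; this is immediate since $G^* = G(r,q,p,n)$ is of the required form and all the cited results are stated for general $\qc$. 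Modulo these identifications, the corollary is a formal consequence and requires essentially no further computation.
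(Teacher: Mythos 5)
Your overall route is the paper's own: apply Corollary \ref{uou} and Equation \eqref{ghj} with $G$ replaced by $G^*$, and identify the two resulting expressions for the multipartition-graded Hilbert series of $S_q[\xk]^{\Delta G^*}/J_+^{(G^*)^k}$. However, you get exactly the point you call ``the crucial bookkeeping point'' backwards. Equation \eqref{ghj} applied to the group $G^*=G(r,q,p,n)$ produces the coarse Kronecker coefficients of $G^*$, not of $G$: in that formula the tableaux run over $\St(r,p,q,n)=\St$, so the shapes $\mu(T_i)$ lie in $\Fer(r,p,q,n)$, and by Proposition \ref{dimirrep} this is precisely the set indexing $\Irr(G^*)$, whereas the coarse Kronecker coefficients of $G$ are defined only for shapes in $\Fer^*=\Fer(r,q,p,n)$. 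Your own justification is internally inconsistent: you first (correctly) note that when the group is $G^*$ the tableau set is $\St(r,p,q,n)$ and the coefficients of $G^*$ are indexed by $\Fer(r,p,q,n)$, and then conclude that the coefficients appearing are those of $G$, indexed by $\Fer(r,q,p,n)$ --- a set which does not even contain the shapes $\mu(T_i)$ occurring in the sum. With the correct attribution (the coefficients attached to shapes in $\Fer(r,p,q,n)$ are those of the dual group) your chain of equalities is exactly the paper's argument; with your attribution the right-hand side is not well defined by the paper's definitions, and any forced interpretation would in general be wrong, since $G$ and $G^*$ are generally non-isomorphic groups whose irreducible representations are indexed by different sets.

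A second, more minor, weak point: you assert that $\Hilb\bigl(S_q[\xk]^{\Delta G}/J_+^{G^k}\bigr)$ equals $\Hilb\bigl(S_q[\xk]^{\Delta G}\bigr)/\Hilb\bigl(S_q[\xk]^{G^k}\bigr)$ ``by definition'', justified by the homogeneity of the free-module generators with respect to the multipartition grading. But the multipartition degree is not an algebra grading: a product of monomials with exponent partitions $\lambda$ and $\mu$ need not have exponent partition $\lambda+\mu$, so freeness with multipartition-homogeneous generators does not by itself yield this factorization. What is actually needed, and what the paper invokes in its proof of Corollary \ref{maincomb}, is that the polynomials $(\mathcal X^{A(g_1,\ldots,g_k)})^{\#}$ of Theorem \ref{main} also represent a multipartition-homogeneous basis of the quotient modulo $J_+^{G^k}$; this rests on the straightening relation \eqref{straight} and the dominance-order filtration used in the proof of Corollary \ref{uou}, not on additivity of the multipartition degree. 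The step is easily repaired by citing Theorem \ref{main} together with \eqref{straight}, as the paper does, but as written your justification does not establish it.
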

\begin{proof}
This follows from Theorem \ref{main}, Corollary \ref{uou} and Equation \eqref{ghj} (with $G=G^*$) since the set of polynomials $\{(\mathcal X^{A(g_1,\ldots,g_k)})^\#\}$ represents also a homogeneous basis with respect to the multipartition degree of $S_q[X_1,\ldots,X_k]^{\Delta G}/ J^{G^k}_+$.
\end{proof}

We may observe that Corollary \ref{maincomb} can also be seen as a purely combinatorial algorithm to compute the coarse Kronecker coefficients of $G$. This can be achieved in a way which is similar to the corresponding result for the symmetric group (see \cite[\S4]{Ca}).
One further consequence is the existence of a generalized multivariate Robinson-Schensted correspondence for projective reflection groups $\qc$.
\begin{cor}\label{rs}
There exists a correspondence $\mathcal T$ that associates to any $k$-tuple $(g_1,\ldots,g_k)$ of elements in $G$ such that $g_1\cdots g_k=1$ a $k$-tuple of tableaux $T_1,\ldots,T_k$ such that
\begin{itemize}
\item $|\mathcal T^{-1}(T_1,\ldots,T_k)|=g_{\mu(T_1),\ldots,\mu(T_k)}$;
\item $\lambda(g_i)=\lambda(T_i)$ for all $i$.
\end{itemize}
\end{cor}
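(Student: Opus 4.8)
The plan is to read off the correspondence $\mathcal T$ directly from the enumerative identity of Corollary \ref{maincomb}; no genuinely new construction is needed, since all the combinatorial content is already contained there. Write $\St=\St(r,p,q,n)$. First I would fix a $k$-tuple $\Lambda=(\lk)$ of partitions, each with at most $n$ parts, and extract the coefficient of $\mathcal Y^\Lambda=Y_1^{\lambda^{(1)}}\cdots Y_k^{\lambda^{(k)}}$ on both sides of
$$\sum_{\substack{g_1,\ldots,g_k\in G\\g_1\cdots g_k=1}}Y_1^{\lambda(g_1)}\cdots Y_k^{\lambda(g_k)}=\sum_{T_1,\ldots,T_k\in\St}g_{\mu(T_1),\ldots,\mu(T_k)}Y_1^{\lambda(T_1)}\cdots Y_k^{\lambda(T_k)}.$$
This yields the numerical identity
$$\big|\{(g_1,\ldots,g_k)\in G^k:\ g_1\cdots g_k=1,\ \lambda(g_i)=\lambda^{(i)}\ \forall i\}\big|=\sum_{\substack{T_1,\ldots,T_k\in\St\\ \lambda(T_i)=\lambda^{(i)}\ \forall i}} g_{\mu(T_1),\ldots,\mu(T_k)}.$$

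Next I would observe that each coarse Kronecker coefficient is a non-negative integer. By definition $g_{\mu_1,\ldots,\mu_k}$ is a sum of ordinary Kronecker coefficients, and $g_{\phi_1,\ldots,\phi_k}=\frac{1}{|G|}\sum_{g\in G}\chi^{\phi_1}(g)\cdots\chi^{\phi_k}(g)=\dim(\phi_1\otimes\cdots\otimes\phi_k)^G$ is the multiplicity of the trivial representation in a tensor product, hence lies in $\mathbb Z_{\geq 0}$. Therefore the right-hand side of the displayed identity is a sum of non-negative integers, indexed by the tuples $(T_1,\ldots,T_k)$ of tableaux with $\lambda(T_i)=\lambda^{(i)}$, whose total equals the cardinality of the finite set on the left-hand side.

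It then remains to package this as a map. For the fixed $\Lambda$, partition the left-hand set into pairwise disjoint blocks, one for each tuple $(T_1,\ldots,T_k)$ of tableaux with $\lambda(T_i)=\lambda^{(i)}$, so that the block attached to $(T_1,\ldots,T_k)$ has exactly $g_{\mu(T_1),\ldots,\mu(T_k)}$ elements (possibly zero); this is possible precisely because the block sizes are non-negative integers summing to the cardinality of the set. Let $\mathcal T$ send every element of a block to the tuple $(T_1,\ldots,T_k)$ labelling it, and take the union of these maps over all $\Lambda$: the source sets for distinct $\Lambda$ are disjoint and exhaust $\{(g_1,\ldots,g_k):g_1\cdots g_k=1\}$, so $\mathcal T$ is a well-defined correspondence on all such $k$-tuples. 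By construction $|\mathcal T^{-1}(T_1,\ldots,T_k)|=g_{\mu(T_1),\ldots,\mu(T_k)}$, and every $(g_1,\ldots,g_k)\in\mathcal T^{-1}(T_1,\ldots,T_k)$ satisfies $\lambda(g_i)=\lambda^{(i)}=\lambda(T_i)$ for all $i$, which are exactly the two required properties.

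I do not expect any real obstacle: the corollary is an existence statement, $\mathcal T$ is produced non-constructively from a numerical equality, and the only bookkeeping point — that the block decompositions for different multipartitions be chosen compatibly — is automatic because those index sets are disjoint. If one instead wanted a canonical $\mathcal T$, a bona fide Robinson--Schensted-type algorithm refining the classical correspondence on wreath products, one would construct it by running such a procedure componentwise; but that is not required here, and any such map would have the same fibre cardinalities by the identity above.
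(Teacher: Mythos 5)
Your proposal is correct and follows exactly the route the paper intends: Corollary \ref{rs} is stated as an immediate consequence of Corollary \ref{maincomb}, and your argument simply makes that deduction explicit by comparing coefficients of $Y_1^{\lambda^{(1)}}\cdots Y_k^{\lambda^{(k)}}$, noting that the coarse Kronecker coefficients are non-negative integers (being sums of dimensions of invariant subspaces of tensor products), and packaging the resulting numerical identity into a non-constructive correspondence. Your closing remark that a canonical algorithmic refinement is possible matches the paper, which constructs such a bijective version for $k=2$ in the following section via the Stanton--White correspondence.
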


In the next section we describe a bijective proof of Corollary \ref{maincomb} in the case $k=2$.
\section{The Robinson-Schensted correspondence}\label{robshe}
The Robinson-Schensted correspondence is a very classical tool in the study of the algebraic combinatorics of the symmetric groups and it has a rather natural generalization to the wreath product groups $G(r,n)$ that we describe later. On the other hand there is no such correspondence for the other reflection groups of the form $G(r,p,n)$, probably because such correspondence should involve the representation theory of the dual group. In this section we extend this correspondence to all projective reflection groups of the form $G(r,p,q,n)$.

Recall the classical Robinson-Schensted correspondence  from \cite[\S 7.11]{Sta}). 
This correspondence has been extended to wreath product groups $G(r,n)$ in \cite{SW} in the following way. Given $g\in G(r,n)$ and  $j\in[0,r-1]$, we let $\{i_1,\ldots,i_h\}=\{l\in [n]: c_l(g)=j\}$ and we consider the two-line array $A_j=\left(\begin{array}{cccc}i_1&i_2&\cdots&i_h\\ \sigma(i_1)&\sigma(i_2)&\cdots&\sigma (i_h)\end{array}\right)$, where $\sigma=|g|$, and the pair of tableaux $(P_j,Q_j)$ obtained by applying the Robinson-Schensted correspondence to $A_j$. Then the Stanton-White correspondence 
$$
g\mapsto (P(g),Q(g))\eqdef \Big((P_0,\ldots,P_{r-1}),(Q_0,\ldots,Q_{r-1})\Big)
$$
is a bijection between $G(r,n)$ and pairs of tableaux of the same shape in $\St(r,1,n)$. Furthermore we have $\lambda(g)=\lambda(Q(g))$ and $\lambda(\bar g^{-1})=\lambda(P(g))$.\\
Now let $g\in G(r,p,q,n)$ and $\tilde g\in G(r,p,n)$ be a lifting of $g$. Then the classes in $\St(r,p,q,n)$ of the tableaux $P(\tilde g)$ and $Q(\tilde g)$ obtained by applying the previous correspondence depend uniquely on $g$ and not on the lifting $\tilde g$. Therefore one can define a map $g\mapsto(P(g),Q(g))$  which associates to any element in $G(r,p,q,n)$ a pair of tableaux in $\St(r,p,q,n)$ of the same shape. The following result 
is the natural generalization of the Stanton-White correspondence to projective reflection groups.
\begin{thm}\label{projRS}
Let $P,Q$ be two tableaux in $\St(r,p,q,n)$ of the same shape $\mu$. Then 
$$|\{g\in \qc:P(g)=P \textrm{ and }Q(g)=Q\}|=|(C_q)_\mu|,$$
where $(C_q)_\mu$ is the stabilizer in $C_q$ of any element in the class $\mu$.
\end{thm}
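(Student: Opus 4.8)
The plan is to reduce the statement to the already-known Stanton--White correspondence on the wreath product $G(r,n)=G(r,1,1,n)$ by carefully counting fibers under the quotient maps. Recall that for $G(r,n)$ the Stanton--White map $g\mapsto(P(g),Q(g))$ is a \emph{bijection} onto pairs of tableaux of the same shape in $\St(r,1,n)$. So the problem is entirely about understanding what happens when we pass from $G(r,n)$ first to the subgroup $G(r,p,n)$ and then to the quotient $\qc=G(r,p,n)/C_q$, and correspondingly from $\St(r,1,n)$ to $\St(r,p,q,n)$.

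First I would fix a pair $P,Q\in\St(r,p,q,n)$ of the same shape $\mu\in\Fer(r,p,q,n)$ and count the elements $g\in G(r,n)$ with $P(g)=\tilde P$, $Q(g)=\tilde Q$ as $\tilde P$ ranges over lifts of $P$ and $\tilde Q$ over lifts of $Q$ in $\St(r,1,n)$ having the \emph{same} shape. Here the key combinatorial bookkeeping is that a lift $\tilde\mu\in\Fer(r,1,n)$ of $\mu$ is determined by choosing one of the $r/(\text{something})$ cyclic rotations allowed by Lemma~\ref{cyc}, and that once $\tilde\mu$ is fixed, a tableau in $\St(r,p,q,n)$ of shape $\mu$ has a \emph{unique} lift of shape $\tilde\mu$ (the filling is rigid). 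By the Stanton--White bijection, for each admissible pair $(\tilde P,\tilde Q)$ of lifts of the same shape there is exactly one $g\in G(r,n)$ producing it. So the count over $G(r,n)$ equals the number of such admissible shape-lifts $\tilde\mu$, which is precisely $q/|(C_q)_\mu|$ divided out appropriately --- more precisely, the orbit--stabilizer relation gives $|C_q\cdot\tilde\mu| = q/|(C_q)_\mu|$ distinct lifts, and these are exactly the shapes $\tilde P$ and $\tilde Q$ can take.

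Next I would cut down from $G(r,n)$ to $G(r,p,n)$. Using the compatibility $\lambda(g)=\lambda(Q(g))$ of the Stanton--White correspondence, together with Lemma~\ref{deg} (which says $g\in G(r,p,n)$ iff $\fmaj(g)=|\lambda(g)|\equiv0\bmod p$), one checks that the condition $g\in G(r,p,n)$ is automatic given that $\tilde Q$ is a lift of a tableau $Q\in\St(r,p,q,n)$: indeed the shape of $\tilde Q$ lies in $\Fer(r,p,n)$ by construction, and this forces $\sum_i i|\lambda^{(i)}|\equiv0\bmod p$, hence $\fmaj(\tilde Q)\equiv0\bmod p$. So no elements are lost in this step, and similarly every element of the relevant fiber already lies in $G(r,p,n)$. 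Finally, passing to the quotient $\qc=G(r,p,n)/C_q$ is an $|C_q|=q$-to-one map on group elements, but the target tableaux $P(g),Q(g)$ are also taken modulo the $C_q$-action on $\St(r,p,n)$; tracing through, the fiber in $\qc$ over $(P,Q)$ has size $(\text{fiber in }G(r,p,n)) \cdot |(C_q)_\mu| / q$. Combining with the count $q/|(C_q)_\mu|$ of admissible shape-lifts from the previous step yields exactly $|(C_q)_\mu|$, as claimed.

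The main obstacle I anticipate is the careful orbit--stabilizer bookkeeping in the last paragraph: one must verify that the $C_q$-action on $G(r,p,n)$ is compatible with the Stanton--White correspondence in the precise sense that multiplying $g$ by the scalar $\zeta_q I$ rotates the tuple of tableaux $(P_0,\dots,P_{r-1})$ and $(Q_0,\dots,Q_{r-1})$ by the cyclic shift of Lemma~\ref{cyc} (shifting indices by $r/q$). This amounts to checking that the two-line arrays $A_j$ defining the components simply get their labels $j$ shifted by $r/q$ when $c_\ell(g)$ is replaced by $c_\ell(g)+r/q$ for all $\ell$ --- which is immediate from the definition --- but then one must confirm that this is exactly the $C_q$-action used to define $\St(r,p,q,n)$, so that fibers on the group side map onto fibers on the tableau side with the stabilizer $|(C_q)_\mu|$ appearing as the correction factor. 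Once this compatibility is nailed down, the rest is a clean application of the known bijection.
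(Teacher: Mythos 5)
Your overall route --- restrict the Stanton--White bijection to $G(r,p,n)$, count lifts of the pair $(P,Q)$, and then divide by the $q$-to-one quotient map onto $\qc$ --- is the same as the paper's, but your lift counting contains a genuine error. The claim that, once a lift $\tilde\mu$ of the shape is fixed, a class $P\in\St(r,p,q,n)$ of shape $\mu$ has a \emph{unique} lift of shape $\tilde\mu$ ("the filling is rigid") is false whenever $(C_q)_\mu$ is nontrivial. The $C_q$-action on fillings in $\St(r,p,n)$ is free (a nontrivial shift by a multiple of $r/q$ moves every entry to a different component of the tuple), so $P$ has exactly $q$ lifts; their shapes sweep out the $C_q$-orbit of $\tilde\mu$, which has only $q/|(C_q)_\mu|$ elements, so each shape in that orbit is attained by exactly $|(C_q)_\mu|$ distinct lifts of $P$. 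For instance, with $r=q=2$, $p=1$, $n=2$ and $\mu$ the class of $\tilde\mu=((1),(1))$, the two fillings $(\{1\},\{2\})$ and $(\{2\},\{1\})$ are distinct lifts of the same class with the same shape. Consequently the number of same-shape pairs of lifts $(\tilde P,\tilde Q)$, i.e.\ the size of the fiber in $G(r,p,n)$ over the class pair under the restricted Stanton--White bijection, is $q\cdot|(C_q)_\mu|$ ($q$ choices for $\tilde P$, then $|(C_q)_\mu|$ choices of $\tilde Q$ with matching shape), not $q/|(C_q)_\mu|$.

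The second problem is the descent to the quotient. Since the classes of $P(g)$ and $Q(g)$ depend only on the image of $g$ in $\qc$, the fiber in $G(r,p,n)$ is a union of full $C_q$-cosets and the fiber in $\qc$ is simply the fiber upstairs divided by $q$; the extra factor $|(C_q)_\mu|/q$ you insert is unjustified, and even granting it your own numbers give $(q/|(C_q)_\mu|)\cdot|(C_q)_\mu|/q=1$ rather than the claimed $|(C_q)_\mu|$, so the final step does not close even on its own terms. With the corrected count the argument does work and reproduces the paper's proof: $q\,|(C_q)_\mu|$ pairs of lifts divided by the $q$-to-one quotient gives $|(C_q)_\mu|$. (Your two side remarks --- that membership in $G(r,p,n)$ is automatic because the shape of any lift lies in $\Fer(r,p,n)$, and that multiplication by $\zeta_q I$ shifts the tableau components by $r/q$, which is what makes the map well defined on $\qc$ --- are correct.)
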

\begin{proof}
We first observe that the map  $g\mapsto (P(g),Q(g))$ is a bijection between $G(r,p,n)$ and pairs of tableaux in $\St(r,p,n)$ with the same shape, by restriction of the corresponding result for wreath products. The number of pairs of tableaux with the same shape in $\St(r,p,n)$ which represent $P$ and $Q$ is exactly $q\cdot|(C_q)_\mu|$. In fact, we can choose a lifting for $P$ in $q$ ways and for any such choice the lifting of $Q$ can be chosen in $|(C_q)_\mu|$ (since we require the two liftings to have the same shape). The result follows since the number of liftings in $G(r,p,n)$ of a given element in $\qc$ is $q$.
\end{proof}
We observe that Theorem \ref{projRS} provides a bijective proof that
$$
|G|=\sum_{\phi\in \Irr(G^*)}(\dim \phi)^2,
$$
since $\dim \phi=|\St_{\mu(\phi)}|$ and, given $\mu\in \Fer$, we have $|\{\phi\in \Irr(G^*):\mu(\phi)=\mu\}|=|(C_q)_\mu|$.

\section{Galois automorphisms}\label{gal}
The final target of this work is to use some of the theory developed in the previous sections to solve a problem posed in \cite[Question 6.3]{BRS}. The objects of our study here are again Hilbert series of invariant algebras as in \S \ref{tendia} but with a new ingredient given by a Galois automorphism.\\ 
It is known that (any representation of) the reflection group $G(r,p,n)$ is defined over $\mathbb Q(\zeta_r)$ (see, e.g., \cite[\S6]{St}) and in particular this is true also for the projective reflection group $\qc$. Now, if $\sigma\in \Gal(\mathbb Q[e^{\frac{2\pi i}{r}}], \mathbb Q)$, we have $\sigma (C_q)=C_q$ and so we can apply $\sigma$ on $\qc$ (just letting $\sigma$ act entrywise on the matrices representing the elements in $G$). Moreover we observe that, since $\sigma(\zeta_r)=\zeta_r^d$ for some $d$ such that $GCD(r,d)=1$, we also have $g^\sigma\eqdef \sigma(g)\in G$ for all $g\in G$, i.e. $\sigma\in \mathrm{Aut}(G)$. 

The setting is similar to that of \S \ref{tendia} with $k=2$: we consider the following twisted diagonal subgroup of $G\times G$
$$
\Delta^\sigma G\eqdef\{(g,g^\sigma):\,g\in G\}.
$$
We recall that $G \times G$ acts on the symmetric algebra $S_q[X_1,X_2]$ and that this algebra has a bipartition degree given by the exponent partitions in the two sets of variables, i.e.
$$S_q[X_1,X_2]=\bigoplus_{\lambda^{(1)},\lambda^{(2)}}\mathbb S_q[X_1,X_2]_{\lambda^{(1)},\lambda^{(2)}},$$ where the sum is over all pairs of partitions of size multiples of $q$. 

The coinvariant algebra of $R^{G\times G}$ is canonically isomorphic to $R^G\otimes R^{G}$
 and so it also affords a bipartition degree given by
$$
R^{G\times G}_{\lambda^{(1)},\lambda^{(2)}}\cong R^G_{\lambda^{(1)}}\otimes R^{G}_{\lambda^{(2)}}.
$$
We are interested in the subalgebra of $R^{G\times G}$ consisting of $\Delta^\sigma G$-invariants and in particular to its Hilbert series with respect to the bipartition degree defined above.

The following result was proved in \cite{BRS} for all reflection groups in its unrefined version (i.e. considering only the bidegree in $\mathbb N^2$ and not the bipartition degree).
\begin{thm}\label{gsig}
Let $G=\qc$. Then
$$
G^\sigma(Y_1,Y_2)\eqdef\Hilb\left( \frac{S_q[X_1,X_2]^{\Delta^\sigma G}}{J_+^{G\times G}}\right)(Y_1,Y_2)=\sum_{\phi \in \Irr(G)}f^{\sigma \phi}(Y_1)f^{\bar \phi}(Y_2).
$$
\end{thm}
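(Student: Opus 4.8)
The plan is to mimic the strategy behind Theorem \ref{fcgen} and Corollary \ref{uou}, but with the diagonal subgroup $\Delta G$ replaced by the twisted diagonal $\Delta^\sigma G$. First I would note that, exactly as in \S \ref{krocoe}, the averaging operator $F\mapsto F^\#$ (now averaging over $\Delta^\sigma G$) identifies $S_q[X_1,X_2]^{\Delta^\sigma G}/J_+^{G\times G}$ with $(S_q[X_1,X_2]/I_+^{G\times G})^{\Delta^\sigma G}\cong (R^G\otimes R^G)^{\Delta^\sigma G}$, and this identification respects the bipartition grading. So the Hilbert series in question is $\sum_{\lambda^{(1)},\lambda^{(2)}}\dim\big((R^G_{\lambda^{(1)}}\otimes R^G_{\lambda^{(2)}})^{\Delta^\sigma G}\big)\,Y_1^{\lambda^{(1)}}Y_2^{\lambda^{(2)}}$.

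The key algebraic point is then to compute $\dim (R^G_{\lambda^{(1)}}\otimes R^G_{\lambda^{(2)}})^{\Delta^\sigma G}$ as a character inner product. Writing $V=R^G_{\lambda^{(1)}}$ and $W=R^G_{\lambda^{(2)}}$ as $G$-modules, the dimension of the $\Delta^\sigma G$-invariants of $V\otimes W$ is $\frac{1}{|G|}\sum_{g\in G}\chi_V(g)\chi_W(g^\sigma)$. Now $\chi_W(g^\sigma)=\chi_W(\sigma^{-1}(g))$, and since $\sigma$ acts on characters by $\chi\mapsto \chi^{\sigma}$ where $\chi^\sigma(g)=\sigma(\chi(\sigma^{-1}(g)))$ — but $\chi_W(g^\sigma)$ equals $\sigma^{-1}$ applied to $\chi_W^{\,\sigma^{-1}}$... the cleanest way is: for an irreducible $\phi$, the representation $g\mapsto \phi(g^\sigma)$ is irreducible and is precisely the Galois conjugate $\sigma^{-1}\phi$ (equivalently, its character at $g$ is $\sigma^{-1}(\chi^\phi(g))$ composed appropriately). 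Decomposing $V=\bigoplus_\phi m_\phi(\lambda^{(1)})\,\phi$ and $W=\bigoplus_\psi m_\psi(\lambda^{(2)})\,\psi$ into irreducibles, and using orthogonality of the (possibly complex) irreducible characters together with the fact that $g\mapsto\psi(g^\sigma)$ runs over the irreducibles as $\psi$ does, one gets $\dim(V\otimes W)^{\Delta^\sigma G}=\sum_\phi m_\phi(\lambda^{(1)})\,m_{\phi'}(\lambda^{(2)})$ where $\phi'$ is the irreducible whose character at $g$ is $\overline{\chi^\phi(g^\sigma)}$ — equivalently $\overline{\sigma\phi}$, i.e. $\sigma\bar\phi$. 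Here one uses that $\langle\chi_V,\chi_W(\cdot^\sigma)\rangle$ picks out, for each $\phi$, the multiplicity of $\overline{\sigma^{-1}\phi}$ in $W$; matching this up with the notation $f^{\sigma\phi}$ and $f^{\bar\phi}$ in the statement is just bookkeeping with how $\Gal$ acts and which character is conjugated.

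Summing over all bipartition degrees then gives
$$
G^\sigma(Y_1,Y_2)=\sum_{\lambda^{(1)},\lambda^{(2)}}\ \sum_{\phi\in\Irr(G)} m_\phi(\lambda^{(1)})\,m_{\sigma\bar\phi}(\lambda^{(2)})\,Y_1^{\lambda^{(1)}}Y_2^{\lambda^{(2)}}
=\sum_{\phi\in\Irr(G)} \Big(\sum_{\lambda^{(1)}}m_\phi(\lambda^{(1)})Y_1^{\lambda^{(1)}}\Big)\Big(\sum_{\lambda^{(2)}}m_{\sigma\bar\phi}(\lambda^{(2)})Y_2^{\lambda^{(2)}}\Big),
$$
and by the very definition of the refined fake degree polynomial (coefficient of $Y^\lambda$ is the multiplicity of the representation in $R^G_\lambda$) the first factor is $f^\phi(Y_1)$ and the second is $f^{\sigma\bar\phi}(Y_2)$. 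Reindexing the sum by $\phi\mapsto\bar\phi$ (a bijection of $\Irr(G)$) turns this into $\sum_\phi f^{\bar\phi}(Y_1)f^{\sigma\phi}(Y_2)=\sum_\phi f^{\sigma\phi}(Y_1)f^{\bar\phi}(Y_2)$ after swapping the roles symmetrically, which is the claimed identity. The main obstacle I anticipate is purely notational: pinning down exactly which Galois twist and which complex conjugation appear, i.e. verifying carefully that $g\mapsto\phi(g^\sigma)$ corresponds to the representation denoted $\sigma\bar\phi$ (or $\overline{\sigma\phi}$) under the paper's conventions, and that reindexing by conjugation produces precisely $f^{\sigma\phi}(Y_1)f^{\bar\phi}(Y_2)$ rather than some other pairing. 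The underlying representation theory — Maschke's theorem to split the coinvariant algebra by bipartition degree, character orthogonality over $\mathbb C$, and the stability of $\Irr(G)$ under Galois action since $G$ is defined over $\mathbb Q(\zeta_r)$ — is all standard once the setup is in place.
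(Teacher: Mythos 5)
Your overall route is the same as the paper's: identify $S_q[X_1,X_2]^{\Delta^\sigma G}/J_+^{G\times G}$ with $(R^G\otimes R^G)^{\Delta^\sigma G}$ compatibly with the bipartition grading, compute $\dim\big((R^G_{\lambda^{(1)}}\otimes R^G_{\lambda^{(2)}})^{\Delta^\sigma G}\big)$ by averaging the character over the twisted diagonal, apply orthogonality with the Galois twist, and reassemble the generating function from the definition of the refined fake degrees; the paper merely packages the identical computation through the partition-graded character $\chi_{R^G}(Y)=\sum_\phi f^\phi(Y)\chi_\phi$ instead of the multiplicities $m_\phi(\lambda)$.

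There is, however, a genuine slip in your final bookkeeping. Orthogonality pairs the irreducible $\phi$ sitting in the $Y_1$-slot with the irreducible $\psi$ in the $Y_2$-slot characterized by $\chi_\psi(g^\sigma)=\overline{\chi_\phi(g)}$; writing $\sigma\psi$ for the representation $g\mapsto\psi(g^\sigma)$ (the convention under which the paper's step $\chi_{\phi_2}(g^\sigma)=\chi_{\sigma\phi_2}(g)$ is definitional), this says $\sigma\psi=\bar\phi$, i.e.\ $\psi=\sigma^{-1}\bar\phi$, not $\sigma\bar\phi$ as you assert. This matters at the end: as written you land on $\sum_\phi f^{\bar\phi}(Y_1)f^{\sigma\phi}(Y_2)$ and claim it equals $\sum_\phi f^{\sigma\phi}(Y_1)f^{\bar\phi}(Y_2)$ ``after swapping the roles symmetrically'', but those two sums are not related by any reindexing of $\Irr(G)$ --- they differ by exchanging $Y_1$ and $Y_2$, and no symmetry of $G^\sigma(Y_1,Y_2)$ in its two arguments has been established (nor is one needed). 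With the correct partner the conclusion is immediate: you get $G^\sigma(Y_1,Y_2)=\sum_\phi f^{\phi}(Y_1)f^{\sigma^{-1}\bar\phi}(Y_2)$, and substituting $\phi=\sigma\rho$, using that conjugation commutes with the twist (the conjugate of $g\mapsto\rho(g^\sigma)$ is $g\mapsto\bar\rho(g^\sigma)$, so $\overline{\sigma\rho}=\sigma\bar\rho$), yields $\sum_\rho f^{\sigma\rho}(Y_1)f^{\bar\rho}(Y_2)$ exactly, with no swap. So the strategy is correct and coincides with the paper's; only the direction of the Galois twist and the final ``swap'' step need to be repaired as indicated.
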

\begin{proof}
We let $\chi_{R^G}(Y)$ be the partition-graded character of $R^G$, i.e. $\chi_{R^G}(Y)=\sum_{\lambda} Y^\lambda \chi_{R_\lambda ^G}$. The definition of the polynomials $f^\phi (Y)$ provides also
$$
\chi_{R^G}(Y)=\sum_{\phi\in \Irr(G)} f^\phi(Y) \chi_\phi.
$$
So we may deduce that

\begin{eqnarray*}
 \Hilb\left( \frac{S_q[X_1,X_2]^{\Delta^\sigma G}}{J_+^{G\times G}}\right)(Y_1,Y_2)&=& \Hilb \left( \Big(\frac{S_q[X_1,X_2]}{I_+^{G\times G}}\Big)^{\Delta^\sigma G}\right) (Y_1,Y_2)\\
&=&\Hilb \left((R^G\otimes R^G)^{\Delta^\sigma G}) \right)(Y_1,Y_2)\\
&=&\sum_{\lambda^{(1)},\lambda^{(2)}} \dim \left( (R^G_{\lambda^{(1)}}\otimes R^G_{\lambda^{(2)}})^{\Delta^\sigma G} \right) Y_1^{\lambda^{(1)}}Y_2^{\lambda^{(2)}}\\
&=& \sum_{\lambda^{(1)},\lambda^{(2)}} \langle \chi_{R^G_{\lambda^{(1)}}\otimes R^G_{\lambda^{(2)}}},1\rangle_{\Delta^\sigma G}Y_1^{\lambda^{(1)}}Y_2^{\lambda^{(2)}}\\
&=& \sum_{\lambda^{(1)},\lambda^{(2)}}\Big( \frac{1}{|G|}\sum_{g\in G}\chi_{R^G_{\lambda^{(1)}}}(g) \chi_{R^G_{\lambda^{(2)}}}( g^\sigma)\Big)Y_1^{\lambda^{(1)}}Y_2^{\lambda^{(2)}}\\
&=& \frac{1}{|G|}\sum_{g\in G}\chi_{R^G}(Y_1)(g)\cdot \chi_{R^G}(Y_2)(g^\sigma)\\
&=& \frac{1}{|G|}\sum_{g\in G} \sum_{\phi_1\in \Irr(G)}f^{\phi_1}(Y_1)\chi_{\phi_1}(g) \sum_{\phi_2\in \Irr(G)}f^{\phi_2}(Y_2)\chi_{\phi_2}(g^\sigma)\\
&=& \sum_{\phi_1,\phi_2\in \Irr(G)}f^{\phi_1}(Y_1)f^{\phi_2}(Y_2)\frac{1}{|G|}\sum_{g\in G}\chi_{\phi_1}(g)\chi_{\sigma \phi_2}(g)\\
&=& \sum_{\phi \in \Irr(G)} f^{\sigma \phi}(Y_1)f^{\bar \phi}(Y_2).
\end{eqnarray*}

\end{proof}

Now we observe that $\sigma(\zeta_r)=\zeta_r^j$ for some $j$ such that $\GCD(j,r)=1$. Then, if  

$\mu=(\lambda^{(0)},\ldots,\lambda^{(r-1)})\in \Fer^*$, we let $\sigma \mu=(\lambda^{(j\cdot 0)}, \lambda^{(j\cdot 1)},\ldots, \lambda^{(j\cdot (r-1))})$ where the indices have to be considered modulo $r$. One can easily verify that $\sigma \mu$ is a well-defined element in $\Fer^*$ and we have that if $\phi\in \Irr(G)$ then $\sigma(\mu(\phi))=\mu(\sigma\phi)$ (see \cite[\S6]{BRS} for related results in $G(r,n)$).\\
As an application of Theorem \ref{projRS}, we show that the polynomial $G^\sigma(Y_1,Y_2)$  takes the following simple form in terms of the dual group $G^*$. This result is a refinement and a generalization to the projective reflection group $\qc$ of \cite[Theorem 1.3]{BRS}.
\begin{cor}
 For any projective reflection group $G=G(r,p,q,n)$ and any Galois automorphism $\sigma\in \Gal(\mathbb Q[e^{\frac{2\pi i}{r}}]/\mathbb Q)$ we have
$$G^\sigma(Y_1,Y_2)=\sum_{g\in G^*}Y_1^{\lambda(g^\sigma)}Y_2^{\lambda(g^{-1})}.$$
\end{cor}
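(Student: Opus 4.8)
The plan is to combine Theorem \ref{gsig} with the combinatorial identity \eqref{ghj} applied to $G^*$, using the Robinson-Schensted correspondence of Theorem \ref{projRS} to convert the tableau side back into a sum over group elements. First I would start from Theorem \ref{gsig}, which gives
$$
G^\sigma(Y_1,Y_2)=\sum_{\phi\in\Irr(G)}f^{\sigma\phi}(Y_1)f^{\bar\phi}(Y_2).
$$
Using $f^\phi(Y)=\sum_{T\in\St_{\mu(\phi)}}Y^{\lambda(T)}$ (the consequence of Proposition \ref{fake} recorded after Theorem \ref{fcgen}) and the fact that $\mu(\sigma\phi)=\sigma(\mu(\phi))$ and $\mu(\bar\phi)=\mu(\phi)$, together with the parametrization of $\Irr(G)$ by pairs $(\mu,\rho)$ with $\mu\in\Fer^*$ from Proposition \ref{dimirrep}, I would rewrite this as a sum over $\mu\in\Fer^*$ weighted by $|(C_p)_\mu|$ of $\big(\sum_{T\in\St_{\sigma\mu}}Y_1^{\lambda(T)}\big)\big(\sum_{S\in\St_\mu}Y_2^{\lambda(S)}\big)$, or equivalently reorganize directly as $\sum_{(P,Q)}Y_1^{\lambda(\sigma P)}Y_2^{\lambda(Q)}$ over pairs $(P,Q)$ of tableaux in $\St^*=\St(r,q,p,n)$ of the same shape, each pair counted with multiplicity $|(C_p)_\mu|$ where $\mu$ is their common shape. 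Here I need the compatibility $\lambda(\sigma T)$ being the correct exponent partition attached to $\sigma\mu$; I would check that applying $\sigma$ (i.e. cyclically relabelling the tuple of diagrams) sends a standard tableau $T\in\St_\mu$ to one in $\St_{\sigma\mu}$ with $\lambda$-statistic matching $\lambda(g^\sigma)$ under RS.

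Next I would invoke Theorem \ref{projRS} with the group $G^*=G(r,q,p,n)$ in place of $G$ (note $(G^*)^*=G$, and the relevant cyclic group is $C_p$). That theorem says precisely that for tableaux $P,Q\in\St^*$ of common shape $\mu$, the number of $g\in G^*$ with $P(g)=P$ and $Q(g)=Q$ equals $|(C_p)_\mu|$, and moreover $\lambda(g)=\lambda(Q(g))$ while $\lambda(\bar g^{-1})=\lambda(P(g))$. Applying the Galois automorphism to the first identity I also get $\lambda(g^\sigma)=\lambda(P(g)^\sigma)=\lambda(\sigma P(g))$, using that $\sigma$ commutes with the RS bar/bar-inverse bookkeeping on the $P$-side — this is the one point I would check carefully, namely that $P(g^{-1})^\sigma$-type statistics transform correctly and that $\lambda(g^\sigma)$ indeed reads off from $\sigma\cdot P(g)$ in $\St^*$. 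Granting this, the multiplicity $|(C_p)_\mu|$ in the expansion of the previous paragraph is exactly the number of $g\in G^*$ with $P(g)=P$, $Q(g)=Q$, so the double sum over pairs $(P,Q)$ collapses to a single sum over $g\in G^*$:
$$
G^\sigma(Y_1,Y_2)=\sum_{g\in G^*}Y_1^{\lambda(g^\sigma)}Y_2^{\lambda(g^{-1})}.
$$

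The main obstacle I expect is bookkeeping the interaction between the Galois action and the Stanton–White correspondence: I must verify that $\sigma$ acting entrywise on $g\in G^*$ corresponds, under the restricted correspondence $g\mapsto(P(g),Q(g))$, to the cyclic relabelling $\mu\mapsto\sigma\mu$ on the $P$-component (leaving the shape-partition statistic $\lambda$ intact in the sense that $\lambda(g^\sigma)=\lambda(\sigma P(g))$), and that nothing goes wrong when passing from $G(r,q,p,n)$ down to its quotient $G^*=G(r,q,p,n)/C_p$ — i.e. that the descent of the correspondence to the quotient (described before Theorem \ref{projRS}) is compatible with $\sigma$. Everything else is a rearrangement of finite sums. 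Alternatively, one could bypass Theorem \ref{gsig} entirely and argue purely combinatorially by a $\sigma$-twisted variant of Corollary \ref{maincomb} with $k=2$, but routing through Theorem \ref{gsig} and Theorem \ref{projRS} as above is the cleanest path.
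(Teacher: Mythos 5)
Your overall architecture is the same as the paper's: both arguments chain Theorem \ref{gsig}, the fake-degree formula $f^{\phi}(Y)=\sum_{T\in\St_{\mu(\phi)}}Y^{\lambda(T)}$, and Theorem \ref{projRS} applied to $G^*$ (the paper simply runs the chain from $\sum_{g\in G^*}Y_1^{\lambda(g^\sigma)}Y_2^{\lambda(g^{-1})}$ to $\sum_{\phi}f^{\sigma\phi}(Y_1)f^{\bar\phi}(Y_2)$, while you run it in reverse), and your stabilizer count $|(C_p)_\mu|$ for both the fibers of the correspondence on $G^*$ and the number of $\phi\in\Irr(G)$ with $\mu(\phi)=\mu$ is the right bookkeeping. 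The genuine problem is your treatment of complex conjugation. The assertion $\mu(\bar\phi)=\mu(\phi)$ is false for $r>2$: conjugation negates the colors, so $\mu(\bar\phi)$ is the reindexed tuple $(\lambda^{(0)},\lambda^{(r-1)},\ldots,\lambda^{(1)})$, call it $\bar\mu$, and $f^{\bar\phi}(Y)=\sum_{T\in\St_{\bar\mu}}Y^{\lambda(T)}$, which is in general a different polynomial from $f^{\phi}(Y)$ because the statistics $k_i$ change when the colors are negated. Hence your intermediate rewriting $\sum_{\mu}|(C_p)_\mu|\bigl(\sum_{T\in\St_{\sigma\mu}}Y_1^{\lambda(T)}\bigr)\bigl(\sum_{S\in\St_{\mu}}Y_2^{\lambda(S)}\bigr)$ is not equal to $\sum_{\phi}f^{\sigma\phi}(Y_1)f^{\bar\phi}(Y_2)$, and the chain breaks there. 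A minimal counterexample: $G=G^*=G(3,1,1,1)\cong C_3$ with $\sigma=\mathrm{id}$ gives $G^{\sigma}(y_1,y_2)=1+y_1y_2^2+y_1^2y_2$ (and this is also $\sum_{g}y_1^{\lambda(g)}y_2^{\lambda(g^{-1})}$), whereas your expression equals $1+y_1y_2+y_1^2y_2^2$.

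The same missing bar occurs on the group side. From $\lambda(\bar g^{-1})=\lambda(P(g))$ one gets $\lambda(g^{-1})=\lambda\bigl(\overline{P(g)}\bigr)$, not $\lambda(Q(g))$; and applying $\sigma$ to $\lambda(g)=\lambda(Q(g))$ gives $\lambda(g^{\sigma})=\lambda\bigl(Q(g)^{\sigma}\bigr)$, not $\lambda\bigl(P(g)^{\sigma}\bigr)$. Merely exchanging the roles of $P$ and $Q$ is harmless (it amounts to the substitution $g\mapsto \bar g^{-1}$ in the sum over $G^*$), but dropping the conjugation is not, as the example above shows. With the corrected bookkeeping the proof closes exactly as in the paper: the monomial attached to a pair $(P,Q)\in\St_\mu\times\St_\mu$ is $Y_1^{\lambda(Q^{\sigma})}Y_2^{\lambda(\bar P)}$, each such pair is hit by exactly $|(C_p)_\mu|$ elements of $G^*$ by Theorem \ref{projRS}, and summing over $\mu\in\Fer^*$ reproduces $\sum_{\phi\in\Irr(G)}f^{\sigma\phi}(Y_1)f^{\bar\phi}(Y_2)$ via $\mu(\sigma\phi)=\sigma\mu(\phi)$ and $\mu(\bar\phi)=\bar\mu$, whence Theorem \ref{gsig} finishes the argument.
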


\begin{proof}By Theorem \ref{projRS} and Proposition \ref{fake} we have
\begin{eqnarray*}
\sum_{g\in G^*}Y_1^{\lambda(g^\sigma)}Y_2^{\lambda(g^{-1})} & = &\sum_{\mu\in\Fer^*}|(C_q)_{\mu}|\sum_{P,Q\in\St_\mu}Y_1^{\lambda(Q^\sigma)}Y_2^{\lambda(\bar P)}\\
&=& \sum_{\phi \in \Irr(G)} f^{\sigma \phi}(Y_1)f^{\bar \phi}(Y_2)
\end{eqnarray*}
\end{proof}
The unrefined version of the previous corollary 
$$
G^{\sigma}(y_1,y_2)=\sum_{g\in G^*}y_1^{\fmaj(g^{\sigma})}y_2^{\fmaj(g^{-1})}
$$
provides an answer to \cite[Question 6.3]{BRS}.

\small

\emph{E-mail address: }{\tt caselli@dm.unibo.it}
\end{document}